\def\R{\mathbb{R}}
\def\Q{\mathbb{Q}}
\def\C{\mathbb{C}}
\def\Z{\mathbb{Z}}
\def\N{\mathbb{N}}
\def\deg{{\mathrm{deg}}}
\def\an{\mathrm{an}}
\def\ot{\otimes}
\def\mr{\mathrm}
\def\mc{\mathcal}
\def\mbb{\mathbb}
\def\ovl{\overline}
\def\scrA{\mathcal{A}}
\definecolor{mred}{rgb}{0.83, 0.0, 0.0}
\definecolor{darkspringgreen}{rgb}{0.09, 0.45, 0.27}
\definecolor{ruby}{rgb}{0.88, 0.07, 0.37}
\def\colorsout#1{\bgroup\markoverwith{\textcolor{#1}{\rule[0.5ex]{2pt}{0.7pt}}}\ULon} %[0.5ex]{2pt}{0.4pt}
\def\coloruline#1{\bgroup\markoverwith{\textcolor{#1}{\rule[-0.5ex]{2pt}{0.7pt}}}\ULon} %[0.5ex]{2pt}{0.4pt}
\newcommand\tint{\mathop{\mathpalette\tb@int{t}}\!\int}
\newcommand\bint{\mathop{\mathpalette\tb@int{b}}\!\int}
\newcommand\tb@int[2]{%
  \sbox\z@{$\m@th#1\int$}%
  \if#2t%
    \rlap{\hbox to\wd\z@{%
      \hfil
      \vrule width .35em height \dimexpr\ht\z@+1.4pt\relax depth -\dimexpr\ht\z@+1pt\relax
      \kern.05em % a small correction on the top
    }}
  \else
    \rlap{\hbox to\wd\z@{%
      \vrule width .35em height -\dimexpr\dp\z@+1pt\relax depth \dimexpr\dp\z@+1.4pt\relax
      \hfil
    }}
  \fi
}
\newcommand*\suppresschapternumber{%
  \let\@makechapterhead\@makeschapterhead
  \patchcmd{\@chapter}
    {\protect\numberline{\thechapter}}
    {}
    {}{}%
}
\newcommand*\removedotbetweenchapterandsection{%
  \renewcommand\thesection{\thechapter\@arabic\c@section}%
}
\title[]{Geometric Bogomolov conjecture for semiabelian varieties}
\author{Wenbin Luo}
\email{luowenbin\_math@outlook.com}
\address{School of Mathematical Sciences, Shanghai Key Laboratory of PMMP, East China Normal University, 500 Dongchuan Road, Shanghai, 200241}
\author{Jiawei Yu}
\email{2201110030@pku.edu.cn}
\address{School of Mathematical Sciences, Peking University, Road Yiheyuan No.5, 100871, Beijing, China}
\date{\today} 
\begin{document}
\begin{abstract}
    We establish the geometric Bogomolov conjecture for semiabelian varieties over function fields. We show a closed subvariety contains Zariski dense sets of small points, if and only if, after modulo its stabilizer, it is a torsion translate of a constant variety. A new phenomenon is that a special subvariety may not have a Zariski dense set of points of height $0$.
    
\end{abstract}
\maketitle
\section{Introduction}
\subsection{Background and main results}
Let $K/k$ be a finitely generated field extension such that $k$ is algebraically closed in $K$.
We fix a \textit{model} of $K/k$, that is, a projective normal $k$-variety $\mathfrak B$ such that $k(\mathfrak B)=K$. We further equip $\mathfrak B$ with an ample line bundle $\mathcal H$ on $\mathfrak B$.
Let $A$ be an abelian variety over $K$, on which we fix an ample line bundle $N$ that is \textit{symmetric} i.e., $[-1]^*N\simeq N$. Then the Néron-Tate height $\widehat h_{N}:A(\ovl K)\rightarrow \R$ is a quadratic form on $A(\ovl K)$. We say a subvariety $X\subset A_{\ovl K}$ \textit{contains Zariski dense sets of small points} if, for any $\epsilon>0$, $X(\epsilon):=\{x\in X(\ovl K)\mid \widehat h_{N}(x)\leq \epsilon\}$ is Zariski dense in $X$. The celebrated geometric Bogomolov conjecture {\bf (GBC)} states that $X$ contains Zariski dense sets of small points if and only if $X$ is \textit{special} in the sense of \cite{Yamaki2013gbc}. 

This is an analogue of the Bogomolov conjecture over number fields, which was proved by Ullmo \cite{Ullmo1998Bogomolov} for curves embedded in their Jacobians, and by Zhang \cite{Zhang1998Bogomolov} in the general case. In \cite{Moriwaki2000Height}, Moriwaki generalized the Bogomolov conjecture to the case over finitely generated field over $\Q$ by introducing an arithmetic height function.

Surprisingly, the geometric Bogomolov conjecture is significantly more intricate than its arithmetic equivalent. Its proof has a rich history involves contributions from multiple mathematicians. In \cite{Gubler2007GBC}, Gubler proved a crucial result showing that {\bf(GBC)} holds for somewhere totally degenerate abelian variety by applying his tropical equidistribution theorem. His method and its generalization \cite{Gubler2010nonA} was then largely exploited by Yamaki in \cite{Yamaki2016strict}. In addition, Yamaki \cite{Yamaki2017nondense} proved the case that $X$ is of dimension $1$ or codimension $1$ in $A$, using his crucial reduction theorem in \cite{Yamaki2018trace} showing that it suffices to prove the case that $A$ is nowhere degenerate and has trivial $K/k$-trace. These ultimately led to the proof of the geometric Bogomolov conjecture by Xie--Yuan \cite{YX2022GBC}. 

If $k$ is of characteristic $0$, Cinkir obtained an effective result for curves embedded in their Jacobians \cite{Cinkir2011Bogomolov}. In \cite{GH2019Bogomolov}, Gao--Habegger proved the case that the transcendental degree of $K/k$ is $1$ by using their height inequality.  By constructing the Betti form via dynamical systems, Cantat--Gao--Habegger--Xie \cite{CGHX2021GBC} provided a proof of {\bf (GBC)} in the characteristic $0$ case.

A semiabelian variety is an extension of an abelian variety by a torus. The Bogomolov conjecture for semiabelian varieties over number fields is proved by Chambert-Loir \cite{cham2000pet} in the almost-split case, by David--Phillipon \cite{DP2000Semiabel} in the general case (via a reduction to the Manin--Mumford conjeture), and by Kühne \cite{Kuhne2024semibogo} using his equidistribution theorem. In this article, we present a generalization of {\bf (GBC)} for semiabelian varieties. 

Throughout this article,
let $G$ be a semiabelian variety over $K$ given by an exact sequence of group varieties
$$0\rightarrow \mathbb{G}_{m}^t\rightarrow G\xrightarrow{\pi} A\rightarrow0$$
where $A$ is an abelian variety defined over $K$. Such an exact sequence corresponds to an element $\underline \eta:=(\eta_1,\dots,\eta_t)\in (A^\vee(\ovl K))^t$ by the Weil-Barsotti theorem.
After replacing $K$ by a finite extension, we may further assume that $\underline \eta\in (A^\vee(K))^t$

Each $\eta_i$ can be viewed as a translation-invariant line bundle $Q^{(i)}$ on $A$. We take the compactification 
$$\overline G:=\mbb P(\mc O_A\oplus (Q^{(1)})^\vee)\times_A\cdots\times_A\mbb P(\mc O_A\oplus (Q^{(t)})^\vee)$$ as in \cite{Kuhne2024semibogo}. Let $M$ be the line bundle associated with the boundary divisor $\overline G\setminus G$ as in \S \ref{subsec_height_semi}. Then the multiplication by $n$ map on $G$ extends to $[n]:\ovl G\rightarrow \ovl G$ such that $[n]^*M\simeq nM$. This gives a canonical height $\widehat h_{M}:\ovl G(\ovl K)\rightarrow \R_{\geq 0}$ via Tate's limiting process
$$\widehat h_{M}(x):=\lim_{n\rightarrow +\infty}\frac{1}{n}h_{M}([n]x)$$
for $x\in G(\ovl K)$, where $h_M$ is a Weil height function associated with $M$. 

Let $L=M+\ovl\pi^*N$, where $\ovl\pi: \ovl G\rightarrow A$ extends $\pi$. We have the canonical height $\widehat h_{L}:=\widehat h_{M}+\widehat h_{N}\circ \ovl\pi:\ovl G(\ovl K)\rightarrow\R$ associated with $L$.
Note that this compactification is different from \cite{cham2000pet}, but the heights are comparable.

For any closed subvariety $X\subset G_{\ovl K}$, we denote by $\mr{Stab}(X)$ the closed subset $\{y\in G_{\ovl K}\mid y+X=X\}$ equipped with the reduced scheme structure, and by $\mr{Stab}_0(X)$ the neutral component of $\mr{Stab}(X)$ which is a semiabelian variety due to \cite[Corollary 5.4.6 (1)]{Brion2017str}. In this article, we prove the following geometric Bogomolov type result.
\begin{theo}[Geometric Bogomolov Conjecture]\label{theo_GBC}
    Let $X\subset G_{\ovl K}$ be a closed subvariety. Then the following are equivalent:
    \begin{enumerate}
        \item[\textnormal{(a)}] For any $\epsilon>0$, $\{x\in X(\ovl K)\mid \widehat h_{L}(x)<\epsilon\}$ is Zariski dense in $X$.
        \item[\textnormal{(b)}] There exist 
        \begin{itemize}
            \item a torsion point $x$ in the semiabelian variety $\widetilde G:=G_{\ovl K}/\mr{Stab}_0(X)$.
            \item a semiabelian variety $G_0$ over $k$ with a homomorphism $h:G_0\ot_k \ovl K\rightarrow \widetilde G$ of finite kernel.
            \item a closed $k$-subvariety $X_0\subset G_0$,
        \end{itemize}  
        such that $X/\mr{Stab}_0(X)=h(X_0\ot_{k}\ovl K)+x.$
    \end{enumerate}
\end{theo}
A closed subvariety satisfying condition (b) is called \emph{special}. Note that being special is independent of the choice of $L$ and the completion of $G$.

\subsection{New phenomenon and applications}
In the aforementioned Bogomolov type results, we can see that if a closed subvariety contains Zariski dense sets of small points, then points of canonical height $0$ are Zariski dense in $X$. This may not always be the case for semiabelian varieties over function fields. We give an explicit example.
\begin{exem}\label{exam_nondense}
    Let $k=\C$ and $E$ be an elliptic curve over $\C$. Set $\mathfrak B=E$. Then $K$ is the function field of $E$. The Poincaré line bundle $\mc P$ on $E\times E$ gives a semiabelian variety $$0\rightarrow \mbb G_m\rightarrow G\xrightarrow{\pi} E_{\ovl K}\rightarrow 0.$$
    Taking a non-torsion point $y\in E(k)$, the canonical height on the fiber $\ovl\pi^{-1}(y\ot_k \ovl K)\simeq \mathbb P^{1}_{\ovl K}$ is induced by the model 
    $$(f:\mbb P(\mc O_{E}\oplus \mc P_y)\rightarrow E,\mc M),$$ where $\mc M=2O(1)-f^*\mc P_y$. Here $O(1)$ is the relative tautological line bundle of $f$. By the non-triviality of $\mc P_y$, this height only vanishes at zero and infinity on $\mbb P^1(\ovl K)$, while $\pi^{-1}(y\ot_k\ovl K)$ is clearly special with the stabilizer $\mbb G_m$.
\end{exem}

Assume that $k=\ovl {\mbb F}_p$. A direct application of our result is the following: if the maximal nowhere degenerate abelian subvariety of $A$ is constant (see \cite{Yamaki2018trace} for the definitions), then Theorem \ref{theo_GBC} implies the Manin--Mumford conjecture in the positive characteristic case. The reason we need to add this assumption is that {\bf (GBC)} for abelian varieties is proved in this case without reducing to the Manin--Mumford conjecture in loc. cit..

We note that the Manin--Mumford conjecture in positive characteristic was originally proved by Hrushovski using model theory \cite{Hrushovski2001MMC}. Inspired by Hrushovski's approach, Pink-Rossler \cite{PinkRossler2002MMC,PinkRossler2004MMC} gives another proof using only algebraic geometry. 

\subsection{Outline of the proof}
We sketch the proof of $\textnormal{(b)}\Rightarrow\textnormal{(a)}$. We first prove the case that $G$ is \emph{quasi-split}, namely, $G$ is isogenous to $G_0\times_k A_1$ where $G_0$ is a semiabelian variety over $k$, and $A_1$ is an abelian variety over $K$ with trivial $\ovl K/k$-trace (see \S\ref{subsec_trace} for the definition). This was done by generalizing Yamaki's theory of relative height \cite{Yamaki2018trace}. In general, we can reduce to this case by using what we call the \emph{relative Faltings--Zhang} maps:
\[\begin{tikzcd}
     &G^n_{/A}:=\underbrace{G\times_A\cdots\times_A G}_{n\text{ times}} \arrow[r,"\alpha_n"]\arrow[rd,"\beta_n"] & \mbb G_m^{t(n-1)}\times A\\
     & & \mbb G_m^{t(n-1)}\times G\arrow[u,"\mr{id}\times \pi"].
    \end{tikzcd}
\]
where $\alpha_n(x_1,\dots, x_n):=(x_1-x_2,\dots,x_{n-1}-x_n,\pi(x_1))$ and $\beta_n(x_1,\dots, x_n):=(x_1-x_2,\dots,x_{n-1}-x_n,x_n).$

Let $S=\pi(X)$ and $\eta$ be the generic point of $S$. Using a similar argument as in \cite[Lemma 3.1]{Zhang1998Bogomolov}, we can show that if $X$ is of trivial stabilizer and $X_\eta$ is geometrically irreducible, then the fibered product $X^n_{/S}$ is generically finite to $\alpha_n(X^n_{/S})$ for $n\gg0$. In particular, $\beta_n(X^n_{/S})\rightarrow\alpha_n(X^n_{/S})$ is generically finite, since $\beta_n$ is an isomorphism.

Note that $\alpha_n(X^n_{/S})\subset \mbb G_m^{t(n-1)}\times A$ contains Zariski dense sets of small points, by the quasi-split case, $\alpha_n(X^n_{/S})$ is special. 
For simplicity, we consider the example that $\alpha_n(X^n_{/S})=A$.

Observe that $\beta_n(X^n_{/A})\subset \mbb G_m^{t(n-1)}\times G$ also contains Zariski dense sets of small points. The idea is as follows: $\beta_n(X^n_{/A})$ is "almost a subgroup" in $\mbb G_m^{t(n-1)}\times G$, which is also "almost a section" of $A$. This should happen only when $G$ is quasi-split.

Actually, combining with Lemma \ref{lemm_aux} and Theorem \ref{theo_constant_torsion_test}, we proved that if \begin{enumerate}
    \item $X$ contains Zariski dense sets of small points,
    \item $\mr{Stab}(X)$ is trivial, and 
    \item $\pi(X)$ generates $A$, i.e. $\Z\pi(X)$ is Zariski dense in $A.$
\end{enumerate}then $G$ is quasi-split. 
Therefore, for a general $X$ containing Zariski dense sets of small points, after taking the quotient by the stabilizer of $X$, and shrinking $A$ if necessary, we can reduce to the quasi-split case.

As for the proof of Lemma \ref{lemm_aux}, we consider dynamical systems of form: $$[n^2]\times[n]:\mbb G_m^{t'}\times G\rightarrow \mbb G_m^{t'}\times G,$$ together with a modified fundamental inequality in Arakelov geometry (see Theorem \ref{theo_fund_ineq}). 

In addition, our approach offers a new proof of the Bogomolov conjecture for semiabelian varieties over $\ovl \Q$, by considering the number field analogue of Theorem \ref{theo_constant_torsion_test}. 
\subsection{Structure of the article}
In \S\ref{sec_GCH_SEMI}, we begin by recalling the theory of adelic line bundles, adelic divisors, arithmetic intersection theory, and prior results on abelian varieties. In particular, we provide the construction of canonical line bundles on semiabelian varieties over a function field.

In \S\ref{sec_GBC_SEMI}, we formulate the geometric Bogomolov conjecture. We prove that {\bf(GBC)} holds for $G$ if and only if it holds for any $G'$ in its isogeny class.

In \S\ref{sec_YMK_RVS}, we generalize Yamaki's theory of relative heights to prove {\bf (GBC)} in the quasi-split case, i.e., $G$ is isogenous to $G_0\times_k A_1$ where $G_0$ is a semiabelian variety over $k$, and $A_1$ is an abelian variety over $K$ with trivial $\ovl K/k$-trace (see \S\ref{subsec_trace} for the definition).

In \S\ref{sec_REL_FZ}, we begin by proving the equivalence between containing Zariski dense sets of small points and vanishing of certain arithmetic intersections numbers (see Lemma \ref{lemm_aux}). As an application, we show that a special subvariety contains Zariski dense set of small points. We then use the trick of relative Faltings--Zhang map to reduce the general case to the quasi-split case, relying on Lemma \ref{lemm_aux} and Theorem \ref{theo_constant_torsion_test}.

In \S\ref{sec_Appendix A} Appendix A, we offer a proof for our modified fundamental inequality (see Theorem \ref{theo_fund_ineq}). The absolute minimum in the original fundamental inequality proved in \cite[Theorem 5.2]{zhangposvar} and \cite[Proposition 4.3]{Gubler2007GBC} is replaced by a new invariant called \textit{sectional minimum} (see Definition \ref{defi_ess_abs}). 

In \S\ref{sec_Appendix_B} Appendix B, we outline a new proof for Bogomolov conjecture for semiabelian varieties over number fields, which is purely Arakelov-theoretical. We essentially reduce to the almost-split case which was proved by Chambert-Loir \cite{cham2000pet}.

\section*{Notation and conventions}
In this article, a \emph{variety} refers to an integral separated scheme of finite type over a field. This also applied to the notion of a \emph{subvariety}.

Let $k$ be an algebraically closed field.
The field $K$ under consideration is a function field $k(\mathfrak B)$ of a normal projective variety $\mathfrak B$ over $k$. Set $$\mathfrak B^{(1)}:=\{\nu\in \mathfrak B\mid \nu\text{ is of codimension }1\text{ in }\mathfrak B\},$$
which corresponds to the set of \emph{places} of $K$. Each $\nu\in \mathfrak B^{(1)}$ gives an absolute value $\lvert\cdot\rvert_\nu:K\rightarrow \R_{\geq 0}, \lambda\mapsto \exp(-\mr{ord}_\nu(\lambda)),$ where $\mr{ord}_\nu(\cdot)$ is the order of the discrete valuation ring $\mc O_{\mathfrak B,\nu}.$ We denote by $K_\nu$ the completion of $K$ with respect to $\lvert\cdot\rvert_\nu$, and by $K_\nu^\circ$ the valuation ring $$K_\nu^\circ:=\{\lambda\in K_\nu\mid \lvert\lambda\rvert_\nu\leq 1\},$$ We fix an ample line bundle $\mc H$ on $\mathfrak B$. We denote by $\deg_{\mc H}(\nu)$ the degree
$\mc H^{\dim {\mathfrak B}-1}\cdot[\ovl {\{\nu\}}]$ for $\nu\in \mathfrak B^{(1)}$.

Let $X$ be a projective $K$-variety. Let $\mc S$ be an integral scheme with function field $K$.
We say a projective integral $\mc S$-scheme $f:\mc X\rightarrow \mc S$ is an $\mc S$\textit{-model} of $X$, if $\mc X_K$ is isomorphic to $X$, where $\mc X_K$ is the generic fiber of $\mc X$. Let $L$ be a line bundle on $X$. We say $(f:\mc X\rightarrow \mc S,\mc L)$ is an $\mc S$-model of $(X,L)$ if $\mc X_K=X$ and $\mc L_K=L$. If $\mc S=\mr{Spec}(R)$ for some integral domain $R$, we say $(\mc X,\mc L)$ is a $R$-model of $(X, L)$. If $\mc S=\mathfrak B$, we simply say $(\mc X,\mc L)$ is a model of $(X, L)$ if there is no ambiguity.

Let $X_1$ and $X_2$ be projective $K$-varieties. Let $L_i$ $(i=1,2)$ be line bundles on $X_i$ respectively. We denote by $L_1\boxtimes L_2$ the line bundle $p_1^*L_1+p_2^*L_2$ on $X_1\times X_2$, where $p_i$ $(i=1,2)$ is the $i$-th projection $X_1\times X_2\rightarrow X_i.$
\section{Geometric canonical heights on semiabelian varieties}\label{sec_GCH_SEMI}
\subsection{Non-archimedean metrics and measures}
Let $X$ be a projective variety over $K$, and fix a place $\nu\in\mathfrak B^{(1)}$. Let $K_\nu$ be the completion of $K$ with respect to the absolute value $\lvert\cdot\rvert.$
We denote by $X^\an_\nu$ the analytification of $X_{K_\nu}$ with respect to $\lvert\cdot\rvert_\nu$ in the sense of Berkovich \cite{Berkovich}. $X_\nu^\an$ consists of pairs $x:=(p,\lvert\cdot\rvert_x)$, where $p$ is a scheme point of $X_{K_\nu}$, and $\lvert\cdot\rvert_x$ is an absolute value on the residue field $\kappa(p)$ extending $\lvert\cdot\rvert_\nu$. $X_\nu^\an$ is equipped with \emph{the Berkovich topology} \cite[\S 1]{Berkovich}. 

For each $x\in X_\nu^\an$, let $\widehat\kappa(x)$ denote the completion of $\kappa(p)$ with respect to $\lvert\cdot\rvert_x$.
Let $L$ be a line bundle over $X$. A $\nu$-adic metric $\phi_\nu$ on $L$ is a collection $\{\lVert\cdot\rVert_{\phi_\nu}(x)\}_{x\in X^{\an}_\nu}$, where each $\lVert\cdot\rVert_{\phi_\nu}(x)$ is a norm on $L_x\ot_{\kappa(p)}\widehat\kappa(x)$. 

We say $\phi_\nu$ is \textit{continuous} if for any open subset $U\subset X$ and $s\in \Gamma(L,U)$, the function $$(x\in U^\an_{\nu})\mapsto \lVert s\rVert_{\phi_\nu}(x)$$ is continuous with respect to the Berkovich topology. 

If there exists a $K^\circ_\nu$-model $(\mc X,\mc L)$ of $(X,\ell L)$ for some positive integer $\ell$, then the induced \textit{model metric} on $L$ is defined as in \cite[Lemma 7.4]{gubler2006local}. By \cite[Theorem 7.12]{gubler2006local}, any continuous metric is a uniform limit of model metrics. 

The model metric is said to be \textit{semipositive} if $\mc L$ is relatively nef. A continuous metric $\phi_\nu$ is said to be \emph{semipositive} if it is a uniform limit of semipositive model metrics. A pair $(L,\phi_\nu)$ is then referred to a \textit{semipositive metrized line bundle}. 

Semipositive metrized line bundles generate an abelian group, whose elements are called \emph{admissible metrized line bundle}.

The following is due to Chambert-Loir \cite{Chambert2006Measures}:
\begin{prop}
    Let $(L_1,\phi_{1,\nu}),\dots,(L_{\dim X},\phi_{\dim X,\nu})$ be admissible metrized line bundles on $X$. There exists a unique way to associate a regular Borel measure $c_1(L_1,\phi_{1,\nu})\cdots c_1(L_{\dim X},\phi_{\dim X,\nu})$ on $X^\an_\nu$ which is multi-linear and symmetric in $(L_1,\phi_{1,\nu}),\dots,(L_{\dim X},\phi_{\dim X,\nu})$.
\end{prop}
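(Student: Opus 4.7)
The plan is to follow the original approach of Chambert-Loir \cite{Chambert2006Measures}, proceeding in three stages. First, I would construct the measure when each $\phi_{i,\nu}$ is a model metric. After passing to a common refinement, I may assume all metrics are induced by a single $K_\nu^\circ$-model $(\mathcal X,\mathcal L_1,\dots,\mathcal L_{\dim X})$ of $(X,\ell_1 L_1,\dots,\ell_{\dim X}L_{\dim X})$. The measure is then declared to be
$$c_1(L_1,\phi_{1,\nu})\cdots c_1(L_{\dim X},\phi_{\dim X,\nu}) := \frac{1}{\ell_1\cdots\ell_{\dim X}}\sum_Y (\mathcal L_1\cdots\mathcal L_{\dim X}\cdot Y)\,\delta_{\xi_Y},$$
where $Y$ ranges over the irreducible components of the special fiber $\mathcal X_s$ and $\xi_Y\in X^\an_\nu$ is the Shilov point associated with $Y$. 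Independence of the chosen model follows from the projection formula applied to dominant morphisms of models, and multi-linearity and symmetry at the model level are inherited directly from the corresponding properties of the intersection product.

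Next, I would promote the construction to continuous semipositive metrics by uniform approximation. If each $\phi_{i,\nu}$ is a uniform limit of semipositive model metrics $\phi_{i,\nu}^{(k)}$, let $\mu_k$ denote the measure attached by the previous step to the tuple $(\phi_{i,\nu}^{(k)})_i$. I would establish a Chern-Levine-Nirenberg type bound
$$\left\lvert \int_{X^\an_\nu} f\, (\mu_k - \mu_{k'}) \right\rvert \leq \lVert f\rVert_\infty \sum_{i=1}^{\dim X} C_i\, \bigl\lVert \phi_{i,\nu}^{(k)} - \phi_{i,\nu}^{(k')}\bigr\rVert_\infty$$
for every continuous $f$ on $X^\an_\nu$, where $C_i$ depends only on the degrees of the remaining line bundles. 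This forces $\{\mu_k\}$ to be Cauchy in the weak-$*$ topology, producing a Radon measure independent of the approximating sequence, and inheriting multi-linearity and symmetry by passage to the limit. I would then extend to admissible metrics by writing each $\phi_{i,\nu}$ as a difference of semipositive metrics and expanding multi-linearly; independence of the decomposition follows from multi-linearity at the semipositive level. Uniqueness follows because any assignment satisfying the required formal properties must agree with the explicit formula on model metrics (whose arbitrary scaling is fixed by multi-linearity once a single case is normalised), and the continuity estimate forces agreement on semipositive, and hence on admissible, tuples.

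The main obstacle is the Chern-Levine-Nirenberg bound in the second stage. In the non-archimedean setting I would prove it by expressing the difference of two semipositive model metrics as a difference of Cartier divisors on a common model, reducing the continuity question to bounding intersection numbers of the form $(\mathcal D\cdot \mathcal L_2\cdots\mathcal L_{\dim X}\cdot Y)$ with $\mathcal D$ controlled by the sup-norm distance of the metrics, and then using the nefness of the remaining $\mathcal L_j$ together with a relative Hodge-index style inequality on the special fiber. Everything else -- the construction on models, the weak-$*$ compactness argument, the extension to admissible metrics, and the uniqueness -- is essentially formal once this quantitative continuity statement is in hand.
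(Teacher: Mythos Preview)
The paper does not supply its own proof of this proposition: it is stated with the attribution ``The following is due to Chambert-Loir \cite{Chambert2006Measures}'' and no argument is given. Your outline is a faithful sketch of the construction in that reference (and in Gubler's subsequent refinements), so there is nothing to compare against and your approach is correct.

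One minor remark on the uniqueness clause: as stated in the paper the proposition only asserts existence of \emph{a} way to attach such measures, with multi-linearity and symmetry; uniqueness in any absolute sense would require pinning down a normalisation (e.g.\ total mass $\deg_{L_1,\dots,L_{\dim X}}(X)$ or agreement with the explicit model formula), which the statement here leaves implicit. Your discussion of uniqueness correctly identifies this, but you should be aware that the proposition as written is slightly informal on this point and is really a pointer to the literature rather than a sharp statement.
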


\subsection{Adelic line bundles over function fields}
Consider a pair $\ovl L:=(L,\{\phi_\nu\}_{\nu\in \mathfrak B^{(1)}})$ where each $\phi_\nu$ is a continuous $\nu$-adic metric on $L$. We say that $\ovl L$ is an \textit{adelic line bundle} if
there exists a sequence $\{(\mc X_n,\mc L_n,\ell_n)\}_{n\in\N_+}$, 
such that for each $n$, $(\mc X_n,\mc L_n)$ is a $\mathfrak B$-model of $(X,\ell_n L)$ for $\ell_n\in\N_+,$
and the following conditions hold:
\begin{enumerate}
    \item For each $\nu\in \mathfrak B^{(1)}$, the model metric induced by $(\mc X_{K_\nu^\circ},\mc L_n\ot{K_\nu^\circ},\ell_n)$ converges uniformly to $\phi_\nu,$ as $n\rightarrow+\infty.$
    \item There exists an open subset $\mathfrak U\subset\mathfrak B$, such that for all $\nu\in \mathfrak U\cap \mathfrak B^{(1)}$, $\phi_\nu$ is the model metric induced by $(\mc X_{K_\nu^\circ},\mc L_n\ot{K_\nu^\circ},\ell_n)$ for every $n.$
\end{enumerate}

We say an adelic line bundle $\ovl L=(L,\{\phi_\nu\}_{\nu\in \mathfrak B^{(1)}})$ is \textit{semipositive} if all $\phi_\nu$ are semipositve. An adelic line bundle $\ovl L$ is called \textit{integrable} if it can be written as $\ovl L=\ovl L_1-\ovl L_2$ for semipositive adelic line bundles $\ovl L_1$ and $\ovl L_2$.

We denote by $\widehat{\mr{Pic}}_{\mr{int}}(X)$ the group of integrable adelic line bundles.
For each $\nu\in \mathfrak B^{(1)}$, we denote by $\ovl L_{\nu}$ the admissible metrized line bundle $(L,\phi_\nu).$
\begin{prop}
    We can define a multi-linear and symmetric form \begin{align*}
    \widehat{\mr{Pic}}_{\mr{int}}(X)^{\dim X+1}&\rightarrow \R,\\
    (\ovl L_1,\ovl L_2\dots,\ovl L_{\dim X+1})&\mapsto \ovl L_1\cdot \ovl L_2\cdots\ovl L_{\dim X+1},
\end{align*}
such that the following conditions are satisfied:
\begin{enumerate}
    \item[\textnormal{(1)}] If $\ovl L_i$ are induced by line bundles $\mc L_i$ on a $\mathfrak B$-model $f:\mc X\rightarrow \mathfrak B$ of $X$, then $$\ovl L_1\cdot \ovl L_2\cdots\ovl L_{\dim X+1}=\mc L_1\cdots\mc L_{\dim X+1}\cdot f^*{\mc H}^{\dim \mathfrak B-1}.$$
    \item[\textnormal{(2)}] Let $s$ be a rational section of $L_{\dim X+1}.$ Assume that the cycle associated with $\mr{div}(s)$ is $\sum_{i} \lambda_i Z_i$ where $\lambda_i\in\Z$ and $Z_i$ are prime divisors. Then \begin{align*}
        &\ovl L_1\cdot \ovl L_2\cdots\ovl L_{\dim X+1}=\sum \lambda_i\ovl L_1|_{Z_i}\cdots\ovl L_{\dim X}|_{Z_i}\\&\kern 5em-\sum_{\nu\in \mathfrak B^{(1)}}\deg_{\mc H}(\nu)\int_{X_\nu^\an}\log\lVert s\rVert_{\phi_{n,\nu}}c_1(\ovl L_{1,\nu})\cdots c_1(\ovl L_{\dim X,\nu}).
    \end{align*}
\end{enumerate}
\end{prop}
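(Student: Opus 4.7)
\smallskip

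\noindent\textbf{Proof proposal.} The plan is to construct the form first on pairs of model adelic line bundles defined on a common $\mathfrak B$-model, extend it to all model adelic line bundles by passing to a dominating model, and then pass to the limit to handle general integrable adelic line bundles.

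For the first step, given a model $f:\mc X\rightarrow \mathfrak B$ of $X$ with line bundles $\mc L_1,\dots,\mc L_{\dim X+1}$ such that $\mc L_i$ is a model of $\ell_i L_i$, I define
\[
\ovl L_1\cdot\ovl L_2\cdots\ovl L_{\dim X+1}:=\frac{1}{\ell_1\cdots\ell_{\dim X+1}}\mc L_1\cdots\mc L_{\dim X+1}\cdot f^*\mc H^{\dim \mathfrak B-1},
\]
using classical algebraic intersection theory on $\mc X$. To see that this is independent of the model, I would pass to a common dominating model $g:\mc X'\rightarrow \mc X$ and invoke the projection formula together with the fact that $g_*[\mc X']=[\mc X]$ since $g$ is birational. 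Multilinearity and symmetry are inherited from the algebraic intersection pairing. This already gives property (1), and property (2) in the model case reduces, via the identification of $\mr{div}(s)$ on $X$ with the horizontal part of its closure on $\mc X$ and of the vertical contributions with the Chambert-Loir measures of model metrics, to the usual ``restriction to a divisor plus boundary correction'' computation on the model.

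Next, I would extend to arbitrary integrable adelic line bundles by writing $\ovl L_i=\ovl L_i^{(1)}-\ovl L_i^{(2)}$ with $\ovl L_i^{(j)}$ semipositive and expressing the intersection by multilinearity, which reduces everything to the case when each $\ovl L_i$ is semipositive. For semipositive $\ovl L_i$, one approximates by semipositive model adelic line bundles $\ovl L_{i,n}$ and defines the intersection as $\lim_n\ovl L_{1,n}\cdots\ovl L_{\dim X+1,n}$. The heart of the matter is to prove this limit exists and is independent of the approximating sequences, and this is the main obstacle. The key estimate is a telescoping bound of the form
\[
\bigl|\ovl L_{1,n}\cdots\ovl L_{\dim X+1,n}-\ovl L_{1,m}\cdots\ovl L_{\dim X+1,m}\bigr|\leq C\sum_{i=1}^{\dim X+1}\sum_{\nu\in\mathfrak B^{(1)}}\deg_{\mc H}(\nu)\,\lVert \phi_{i,n,\nu}-\phi_{i,m,\nu}\rVert_{\sup},
\]
where the sum over $\nu$ is in fact finite by the second defining condition for adelic line bundles. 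This estimate is proved by an induction on $\dim X$: one chooses a rational section $s$ of $L_{\dim X+1}$, applies property (2) already established for model metrics to rewrite the difference as a sum of lower-dimensional intersections on the prime divisors of $\mr{div}(s)$ plus archimedean-type integrals against Chambert-Loir measures; the integral contributions are bounded by the Chambert-Loir mass, i.e.\ by intersection degrees on $\mathfrak B$-models, by using the nonnegativity of the Chambert-Loir measures of semipositive model metrics. A Cauchy sequence argument then yields existence of the limit, and a standard interleaving argument shows independence of the approximating sequence.

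Finally, multilinearity, symmetry, and property (1) pass to the limit automatically, and property (2) in the general case is obtained by approximating each $\ovl L_i$ by a sequence of semipositive model adelic line bundles and using both the known property (2) at each finite stage and the weak convergence of the associated Chambert-Loir measures $c_1(\ovl L_{1,n,\nu})\cdots c_1(\ovl L_{\dim X,n,\nu})\rightarrow c_1(\ovl L_{1,\nu})\cdots c_1(\ovl L_{\dim X,\nu})$ together with uniform convergence of $\log\lVert s\rVert_{\phi_{n,\nu}}$. I expect the main difficulty to be the telescoping estimate controlling the archimedean-type integrals in terms of sup-norms of metric differences; once this is in place, the rest is formal.
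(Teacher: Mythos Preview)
The paper does not actually supply a proof of this proposition: it is stated as background and then the exposition moves on immediately to heights of subvarieties. Your proposal is a faithful reconstruction of the standard argument in the literature (Zhang's adelic intersection theory over number fields in \cite{zhang1995adelic}, adapted to function fields by Gubler and others), so it is correct and there is nothing to compare it against in the paper itself.

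One small remark: in the function field setting there are no archimedean places, so calling the integral contributions ``archimedean-type'' is slightly misleading; they are genuine non-archimedean Chambert--Loir integrals at the finitely many places where the model metrics differ. Otherwise the telescoping estimate and the induction on $\dim X$ you outline are exactly the right mechanism, and the finiteness of the relevant sum over $\nu$ is indeed guaranteed by condition~(2) in the definition of adelic line bundle.
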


Let $\ovl K$ be an algebraic closure of $K$. Let $Y\subset X_{\ovl K}$ be a closed subvariety. Let $\ovl L_1,\cdots,\ovl L_{\dim Y+1}$ be adelic line bundles on $X$. We are going to define the intersection number $\ovl L_1\cdots\ovl L_{\dim Y+1}\cdot [Y]$. We may assume that $Y$ is defined over a a finite extension $K'$ of $K$. Let $\mathfrak B'$ be the normalization of $\mathfrak B$ in $K'$. We then obtain a natural map $\mathfrak B'^{(1)}\rightarrow \mathfrak B^{(1)}$. For any $\nu'\in \mathfrak B'^{(1)}$ lying over $\nu\in \mathfrak B^{(1)}$, there is a canonical continuous map $(X_{K'})^{\an}_{\nu'}\rightarrow X^{\nu}$ defined in the natural way. 
Replacing $\ovl L_i$ by their pull-backs to $X_{K'}$. The intersection number is defined as
$$\ovl L_1\cdots \ovl L_{\dim Y+1}\cdot [Y]:=\ovl L_{1}|_{Y}\cdots\ovl L_{\dim Y+1}|_Y,$$
which is independent of the choice of $K'$.
The \textit{height} of $Y$ with respect to an adelic line bundle $\ovl L$ is defined as $$h_{\ovl L}(Y):=\frac{\ovl L^{\dim Y+1}\cdot [Y]}{(\dim Y+1)\deg_{L}(Y)}.$$
We say $\ovl L$ is \emph{nef} if it is semipositive and for any closed subvariety $Y\subset X_{\ovl K}$, $h_{\ovl L}(Y)\geq 0.$
\begin{defi}\label{defi_ess_abs}
    Let $\ovl L$ be an adelic line bundle on $X$. The \textit{essential minimum} of $\ovl L$ on $X$ is defined as 
$$\mr{ess}_{\ovl L}(X):=\sup_{U\mathop{\subset}\limits_{open} X}\inf_{x\in U(\ovl K)}h_{\ovl L}(x).$$ 

    For $\nu\in \mathfrak B^{(1)}$, $s\in H^0(X,L)$, define that $$\lVert s\rVert_{\phi_\nu}^{\sup}:=\sup_{x\in X^\an_\nu}\lVert s\rVert_{\phi_\nu}(x).$$
    
    The \emph{Arakelov degree} of $s$ is defined as
    $$\widehat{\deg}_{\ovl L}(s):=-\sum_{\nu\in\mathfrak B^{(1)}}\deg_{\mc H}(\nu)\log\lVert s\rVert^{\sup}_{\phi_\nu},$$
    with the convention that $\widehat{\deg}_{\ovl L}(0):=-\infty.$

 We define the \emph{sectional minimum}
    $$\lambda_{\min}(\ovl L):=\sup\left\{t\in\R\middle | \begin{aligned}
        \forall \text{closed }Z\subsetneq X, \exists s\in H^0(X,nL),s.t. \widehat\deg_{n\ovl L}(s)\geq nt,\\\text{and } s\text{ does not vanish at generic points of }Z
    \end{aligned}\right\}.$$
\end{defi}

\begin{theo}[Fundamental inequality]\label{theo_fund_ineq}
    Let $\ovl L$ be a semipositive adelic line bundle on $X$. Then $$\mr{ess}_{\ovl L}(X)\geq h_{\ovl L}(X)\geq \frac{1}{\dim X+1}(\mr{ess}_{\ovl L}(X)+\dim X\cdot \lambda_{\min}(\ovl L)).$$
    In particular, if the geometric part of $\ovl L$ is ample on $X$, then $\lambda_{\min}(\ovl L)>-\infty.$
\end{theo}
\begin{proof}
    See \S\ref{sec_Appendix A} Appendix A.
\end{proof}
\subsection{Adelic divisors}
Let $D$ be a Cartier divisor on $X$.
A green function $g_\nu$ of $D$ on $X_\nu^\an$ is a continuous function on $(X\setminus \mr{Supp}(D))_\nu^\an$, such that for any local equation $f$ of $D$ on an open subset $U$ of $X$, the function $g_\nu+\log\lvert f\rvert_\nu$ extends to a continuous function on $U^\an_\nu$. Conversely, one can equip $\mc O_X(D)$ the metric $\lVert\cdot\rVert_{g_\nu}$ defined by $\lVert s_D\rVert_{g_\nu}:=\exp(-g_\nu)$, where $s_D$ is the rational section of $\mc O_X$ corresponding to $D$.

An \textit{adelic divisor} on $X$ is a pair $\ovl D=(D,\{g_\nu\}_{\nu\in \mathfrak B^{(1)}})$ where $D$ is a Cartier divisor on $X$, and each $g_\nu$ is Green function of $D$ on $X^{\an}_\nu$, such that 
$$\mc O_X(\ovl D):=(\mc O_X(D), \{\lVert\cdot\rVert_{g_\nu}\}_{\nu\in\mathfrak B^{(1)}})$$
is an adelic line bundle.
We say $\ovl D$ is \textit{effective} if $D$ is effective and $g_\nu\geq 0$ for every $\nu$.
Note that an adelic divisor can be defined by a sequence $\{(\mc X_n,\mc D_n,\ell_n)\}$ such that each $(\mc X_n,\mc D_n)$ is a model of $(X, \ell_n D)$, and $\{(\mc X_n,\mc O_{\mc X_n}(\mc D_n),\ell_n)\}$ defines an adelic line bundle.

\subsection{Canonical line bundles on abelian varieties}\label{subsec_abel_can}
Let $N$ be an ample line bundle over $A$. We first assume that $N$ is symmetric, i.e., $[-1]^*N\simeq N$. The theorem of the cube gives $[m]^*N= m^2N$ for integers $m$. Tate's limiting argument yields a canonical adelic line bundle $\ovl N$ such that $[m]^*\ovl N=m^2\ovl N$ (see \cite[Theorem 9.5.7]{BG2006Heights}). Let $Q$ be a translation-invariant line bundle on $A$. After replacing $K$ by a finite extension, we may write $Q=T_x^*N-N$ for some $x\in A(K).$  Then the canonical line bundle associated with $Q$ is given by $\ovl Q=T_x^*\ovl N-\ovl N-x^*\ovl N$ as shown in \cite[Corollary 2.2]{cham2000pet}. 

\subsection{Heights on semiabelian varieties}\label{subsec_height_semi}
By the Weil-Barsotti formula \cite[\S III.18]{Oort1966group}, the exact sequence $0\rightarrow \mbb G_m^t\rightarrow G\rightarrow A\rightarrow 0$ corresponds to an element $(\eta_1,\cdots, \eta_t)\in (A^\vee(K))^t$.
We denote by $Q^{(i)}$ the translation-invariant line bundle on $A$ corresponding to $\eta_i$. Consider the projective compactification: $$\ovl G:=\mbb P(\mc O_A\oplus (Q^{(1)})^{\vee})\times_A\cdots\times_A\mbb P(\mc O_A\oplus (Q^{(t)})^\vee).$$
Note that the multiplication by $n$ map $[n]:G\rightarrow G$ can be extended to $\ovl G$. We continue to denote this by $[n]$, by abuse of notation.
Let $D^{(i)}_{[0]}$ (resp. $D^{(i)}_{[\infty]}$) be the divisor in $\ovl {G}$ associated with $\mc O_{A}\oplus (Q^{(i)})^\vee\twoheadrightarrow (Q^{(i)})^\vee$ (resp. $\mc O_{A}\oplus (Q^{(i)})^\vee\twoheadrightarrow \mc O_{A}$). Set $$M:=\mc O_{\ovl G}\left(\sum_{1\leq i\leq t}(D^{(i)}_{[0]}+D^{(i)}_{[\infty]})\right),$$ which satisfies $[n]^*M=nM.$
We are going to construct an adelic line bundle $\ovl M$ satisfying $[n]^*\ovl M=n\ovl M$ via the limiting process.

The following is well known to experts. Here we include the proof for the reader's convenience.
\begin{prop}\label{prop_nef_model}
    Let $Q$ be a translation-invariant line bundle on $A$.
    There exists a finite extension $K'/K$, a projective morphism $\mc A\rightarrow \mathfrak B'$ where $\mathfrak B'$ is the normalization of $\mathfrak B$ in $K'$, and a line bundle $\mc Q$ on $\mc A$ such that
    \begin{enumerate}
        \item[(i)] $(\mc A,\mc Q)$ is a $\mathfrak B'$-model of $(A,Q)$,
        \item[(ii)] $\mc Q_\nu$ is nef on $\mc A_\nu$ for each $\nu\in \mathfrak B'^{(1)}$.
    \end{enumerate}    
\end{prop}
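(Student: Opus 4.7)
The plan is to reduce to the case where $Q$ is explicitly given as $T_x^*N - N$ for an ample symmetric $N$ and a rational point $x$, to extend this description to a projective model, and to deduce fiberwise nefness via a $\mr{Pic}^0$-argument.

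First, fix any ample symmetric line bundle $N$ on $A$; the polarization isogeny $\phi_N \colon A \to A^\vee$, $a \mapsto T_a^*N - N$, is surjective. By the Weil-Barsotti formula $Q$ corresponds to some $\eta \in A^\vee(K)$, and after a finite extension $K'/K$ we may lift $\eta$ to a point $x \in A(K')$ with $\phi_N(x) = \eta$. Replacing $K$ by $K'$ and $\mathfrak B$ by its normalization $\mathfrak B'$ in $K'$, we reduce to the case $Q \cong T_x^*N - N$.

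Second, choose a projective normal $\mathfrak B'$-model $\mc A_0$ of $A$ together with a relatively ample line bundle $\mc N_0$ extending $N$; this exists by standard spreading-out arguments. By properness of $\mc A_0 \to \mathfrak B'$, the $K'$-point $x$ extends to a section $\sigma \colon \mathfrak B' \to \mc A_0$, and translation by $\sigma$ defines a birational self-map of $\mc A_0$. Resolving its indeterminacy yields a projective birational modification $\pi \colon \mc A \to \mc A_0$ together with a morphism $T_\sigma \colon \mc A \to \mc A_0$ extending $T_x$. Setting $\mc N := \pi^*\mc N_0$ and
$$\mc Q := T_\sigma^*\mc N_0 - \mc N,$$
we obtain a line bundle on $\mc A$ whose generic fiber is $T_x^*N - N = Q$, giving \textnormal{(i)}.

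For \textnormal{(ii)}, the key observation is that $\mc Q|_{\mc A_\nu}$ coincides, up to a trivial summand pulled back from $\kappa(\nu)$, with the restriction of the Mumford biextension $m^*\mc N_0 - p_1^*\mc N_0 - p_2^*\mc N_0$ on a suitable model of $\mc A_0 \times_{\mathfrak B'} \mc A_0$ to $\mc A_\nu \times \{\sigma(\nu)\}$. The Mumford biextension is translation-invariant in each variable on the generic fiber, so its horizontal slices over $K$ lie in $\mr{Pic}^0$; specializing to $\nu$ places $\mc Q|_{\mc A_\nu}$ in $\mr{Pic}^0(\mc A_\nu)$. A line bundle in $\mr{Pic}^0$ of a proper scheme is numerically trivial on every irreducible component, and is therefore nef.

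The main obstacle is verifying the fiberwise $\mr{Pic}^0$-containment, which requires extending the Mumford biextension to the product model $\mc A_0 \times_{\mathfrak B'} \mc A_0$ compatibly with the modification $\pi$, especially when some $\mc A_\nu$ is reducible or singular. Further birational adjustments of $\mc A$, controlled by Raynaud's description of $\mr{Pic}^0_{\mc A / \mathfrak B'}$ in terms of the Néron model of $A^\vee$, constitute the technical crux of the argument.
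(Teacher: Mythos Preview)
Your outline correctly handles the places of good reduction: over the open locus $\mathfrak U\subset\mathfrak B'$ where the model is an abelian scheme, writing $Q=T_x^*N-N$ and spreading out gives a line bundle that is fiberwise algebraically equivalent to zero, hence nef. The paper does essentially the same thing there (using $T_x^*\mc N-\mc N-x^*\mc N$).

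The gap is at the finitely many bad places. Your argument for \textnormal{(ii)} asserts that ``specializing to $\nu$ places $\mc Q|_{\mc A_\nu}$ in $\mr{Pic}^0(\mc A_\nu)$,'' but this is exactly the point that fails without further input. On a degenerate fiber $\mc A_\nu$ there is no theorem of the square, and after your blow-up $\pi:\mc A\to\mc A_0$ the map $T_\sigma$ no longer restricts to an automorphism of $\mc A_\nu$, so $T_\sigma^*\mc N_0-\pi^*\mc N_0$ has no reason to be numerically trivial on the components of $\mc A_\nu$. Invoking Raynaud's description of $\mr{Pic}^0_{\mc A/\mathfrak B'}$ does not help directly: that description concerns the N\'eron model of $A^\vee$, and relating an arbitrary projective (possibly non-regular, non-semistable) model $\mc A$ to it is precisely the difficulty you are trying to avoid. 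You identify this as ``the technical crux'' but do not resolve it; as written, the proposal does not establish \textnormal{(ii)}.

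The paper's proof takes a genuinely different route at the bad places. Rather than trying to control $\mr{Pic}^0$ of a bad fiber, it represents $Q$ as $p_{1,*}(E\cdot p_2^*([P_1]-[P_2]))$ via a correspondence with an auxiliary curve $C$, passes to a semistable model $\mc C_i$ of $C$ over each bad $\mc O_i$, extends $[P_1]-[P_2]$ to a divisor $\mc D_i$ of degree zero on every component of the special fiber of $\mc C_i$, and then invokes K\"unnemann's computation \cite[Lemma~8.2]{Kunnemann_1996} (together with the regular projective models of \cite[Theorem~4.2]{Kunn1998model}) to conclude that the resulting extension $\mc Q_i$ is nef on the special fiber. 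This intersection-theoretic argument on a regular model is the substantive content missing from your approach.
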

\begin{proof}
    Take a projective $\mathfrak B$-model $\mc A_0$ of $A$. Then
    there exists an open subset $\mathfrak U\subset \mathfrak B$ such that $\mc A_0\times_{\mathfrak B}{\mathfrak U}\rightarrow \mathfrak U$ is an abelian scheme. 
    %Indeed, consider the coefficients of the multiplication, inverse.

    Since $Q$ is translation-invariant, there exists a finite extension $K'/K$, a geometrically reduced projective curve $C/K'$, $P_1,P_2\in C(K')$, and a divisor $E$ om $A\times C$ such that $Q$ is the line bundle corresponds to the divisor $p_{1,*}(E\cdot p_2^*([P_1]-[P_2])).$ Let $\phi:\mathfrak B'\rightarrow \mathfrak B$ be the normalization of $\mathfrak B$ in $K'$. Then there are finitely many $\nu_1,\dots,\nu_m\in \mathfrak B'^{(1)}\setminus \phi^{-1}(\mathfrak U_0)$. Let $\mc O_i:=\mc O_{\mathfrak B',\nu_i}$ for $i=1,\dots,m$, which are discrete valuation rings. 
    
    For each $i$, let $\mc C_i$ be a semistable $\mc O_i$-model of $C$. Then we can extend $P_1-P_2$ to a divisor $\mc D_i$ on $\mc C_i$ such that $\mc D_i\cdot V=0$ for any curve $V$ contained in the special fiber of $\mc C_i.$

    By \cite[Theorem 4.2]{Kunn1998model}, there exists a projective regular scheme $\mc P_i$ flat over $\mr{Spec}(\mc O_i)$, whose generic fiber is $A_{K'}.$ Let $\mc E$ be a divisor of $\mc P_i\times C_i$ extending $E$. Then the line bundle $\mc Q_i$ corresponds to $p_{1,*}(\mc E\cdot p_2^* \mc D_i)$ extends $Q$, and is nef on the special fiber of $\mc P_i$, by the same calculation in \cite[Lemma 8.2]{Kunnemann_1996}.

    After taking blowing-ups of $\mc A_0\times_{\mathfrak B}{\mathfrak B'}$, we can obtain a $\mathfrak B'$-model $\mc A$ of $A_{K'}$, admitting $\mc A\rightarrow \mc A_0\times_{\mathfrak B}{\mathfrak B'}$ and $\mc A\times_{\mathfrak B'}{\mc O_i}\rightarrow \mc P_i$ for $i=1,\dots, m$. 

    We take a symmetric ample line bundle $N$ on $A_{K'}$. We may assume that $Q=T_x^*N-N$ for $x\in A(K')$. Let $\mc N$ be the rigidified symmetric line bundle on $\mc A'_{\mathfrak U}$, extending $N$. Then the line bundle $\mc Q':=T_x^*\mc N-\mc N-x^*\mc N$ is algebraically equivalent to $0$ on each fiber of $\mathfrak U$. Gluing up $\mc Q_1,\dots, \mc Q_m$ and $\mc Q'$, we obtain the desired line bundle $\mc Q$ on $\mc A$.
\end{proof}

From now on, we replace $\mathfrak B$ by a finite covering $\mathfrak B'$ as in Proposition \ref{prop_nef_model}.
\begin{theo}\label{theo_eff_boud_div}
    There exist adelic divisors $\ovl D_{[0]}^{(i)}$(resp. $\ovl D_{[\infty]}^{(i)}$) on $\ovl G$ with underlying divisors $D_{[0]}^{(i)}$(resp. $D_{[\infty]}^{(i)}$) such that 
    \begin{enumerate}
        \item[\textnormal{(a)}] $\ovl D_{[0]}^{(i)}$ and $\ovl D_{[\infty]}^{(i)}$ are effective and semipositive.
        \item[\textnormal{(b)}] $[n]^*\ovl D_{[0]}^{(i)}=n\ovl D_{[0]}^{(i)}$ and $[n]^*\ovl D_{[\infty]}^{(i)}=n\ovl D_{[\infty]}^{(i)}$.
        \item[\textnormal{(c)}] $\ovl D_{[\infty]}^{(i)}-\ovl D_{[0]}^{(i)}$ is an adelic divisor associated with $\pi^* \ovl Q^{(i)}.$
    \end{enumerate}
\end{theo}
\begin{proof}
We briefly explain the limiting process. After taking a finite extension of $K$, we can
take model $\mc A$ of $A$, and models $\mc Q^{(i)}$ of $Q^{(i)}$ as in Proposition \ref{prop_nef_model}. Then $\mc {\ovl G}:=\mbb P(\mc O_{\mc A}\oplus \mc Q_1^\vee)\times_{\mathfrak B}\cdots\times_{\mathfrak B}\mbb P(\mc O_{\mc A}\oplus \mc Q_t^\vee)$ is a $\mathfrak B$-model of $\ovl G$. Let $\mc D_{[0]}^{(i)} \text{ (resp. }\mc D_{[\infty]}^{(i)})$ be the divisors corresponds to $\mc O_{\mc A}\oplus \mc Q^{(i)}\twoheadrightarrow \mc O_{\mc A}\text{ (resp. }\mc O_{\mc A}\oplus \mc Q^{(i)}\twoheadrightarrow \mc Q^{(i)}).$ Note that $\mc O_{\ovl {\mc G}}(\mc D_{[\infty]}^{(i)}-\mc D_{[0]}^{(i)})\simeq \pi^* \mc Q^{(i)}$ where $\pi$ denotes $\ovl{\mc G}\rightarrow \mc A$ by abuse of notation.

For each $m$, by abuse of notation, we denote by $[m]:\mc A_m\rightarrow \mc A$ the normalization of the composition $A\xrightarrow{[m]} A\rightarrow \mc A.$ Similarly, let $[m]:\ovl {\mc G}_m'\rightarrow \ovl {\mc G}$ is the normalization of $\ovl G\xrightarrow{[m]} \ovl G\rightarrow \ovl{\mc G}$. Set $\mc{\ovl G}_m:=\ovl {\mc G}_m'\times_{\mc A} \mc A_m$. Then the sequence $$\{(\mc{\ovl G}_m,[m]^*\mc D_{[0]}^{(i)},m)\}_{m\in \N_+}$$ defines an adelic divisor $\ovl D_{[0]}^{(i)}$ that satisfies $[n]^*\ovl D_{[0]}^{(i)}=n \ovl D_{[0]}^{(i)}$. The adelic divisor $\ovl D^i_{[\infty]}$ is defined by the sequence $\{(\mc{\ovl G}_m,[m]^*\mc D_{[\infty]}^{(i)},m)\}$, whence (b) is verified. Since $$\mc O_{\mc{\ovl G}_m}([m]^*(\mc D_{[\infty]}^{(i)}-\mc D_{[0]}^{(i)}))=[m]^*\pi^*\mc Q=\pi_m^*[m]^*\mc Q$$ where $\pi_m:\mc{\ovl G}_m\rightarrow\mc A_m$, (c) is satisfied.

By construction, (a) is due to the effectivity and relative nefness of $\mc D_{[\infty]}^{(i)}$ and $\mc D_{[0]}^{(i)}$.
\end{proof}

Let $$\ovl M:=\mc O_{\ovl G}\left(\sum_{1\leq i\leq t} \ovl D_{[0]}^{(i)}+\ovl D_{[\infty]}^{(i)}\right)$$ be the adelic line bundle on $\ovl G.$ 
Let $N$ be a symmetric ample line bundle on $A$. The limiting process gives a canonical line bundle $\ovl N$ on $A$ as aforementioned. Set 
$$L:=M+\ovl\pi^*N\text{, and }\ovl L:=\ovl M+\ovl\pi^*\ovl N$$
where $\ovl \pi:\ovl G\rightarrow A$ extends $\pi$.
Note that $L$ is ample on $\ovl G$.
For $x\in \ovl G(\ovl K)$, the \textit{canonical height} is defined as $\widehat h_L(x):=h_{\ovl L}(x)$.

\section{Geometric Bogomolov conjecture for semiabelian varieties}\label{sec_GBC_SEMI}
For the completeness of the article, in this section, we show that {\bf{(GBC)}} for a semiabelian variety holds if it holds for any semiabelian variety in its isogeny class. The reader may skip \S \ref{subsec_trace} and \S \ref{subsec_special}, after reproducing the proof of \cite[Lemma 3.3]{Ziegler2015MLp} in our context.
\subsection{Basics on semiabelian varieties}
We recall some fundamental results on semiabelian varieties.
Let $\mathbf{k}$ be an arbitrary field. In this article, $\mathbf k$ could be $k$, $K$ or $\ovl K.$ 
Let $0\rightarrow \mathbb G_{m}^{t}\rightarrow G\rightarrow A\rightarrow 0$ and $0\rightarrow \mathbb G_{m}^{t'}\rightarrow G'\rightarrow A'\rightarrow 0$ be semiabelian varieties over $\mathbf{k}$. By the Weil-Barsotti formula, these extensions correspond to $\underline \eta=(\eta_1,\cdots,\eta_{t})\in (A^\vee(\mathbf{k}))^{t}=\mr{Ext}^1_{\mathbf k}(A,\mbb G_m^{t})$ and $\underline \eta'=(\eta'_1,\cdots,\eta'_{t'})\in (A'^\vee(\mathbf{k}))^{t'}=\mr{Ext}^1_{\mathbf k}(A',\mbb G_m^{t'})$. 

Let $\phi:G\rightarrow G'$ be a group homomorphism. By \cite[Lemma 1]{Kuhne2020bounded}, there exist homomorphisms $\phi_{\mr{tor}}:\mbb G_m^t\rightarrow \mbb G_m^{t'}$ and $\phi_{\mr{ab}}:A\rightarrow A'$, such that the following diagram is commutative:
\[\begin{tikzcd}
     &0 \arrow[r] &\mathbb G_{m}^{t}\arrow[r]\arrow[d,"\phi_{\mr{tor}}"]& G\arrow[r,"\pi"]\arrow[d,"\phi"]& A\arrow[r]\arrow[d,"\phi_{\mr{ab}}"] &0\\
     &0 \arrow[r] &\mathbb G_{m}^{t'}\arrow[r]&G'\arrow[r,"\pi'"]& A'\arrow[r]&0.
    \end{tikzcd}
\]
Conversely, for every pair $(\phi_{\mr{tor}},\phi_{\mr{ab}})$ such that $\phi_{\mr{tor},*}\underline\eta=\phi_{\mr{ab}}^*\underline\eta'$, there exists a unique $\phi:G\rightarrow G'$ satisfying the above commutative diagram.

Note that there exists a compactification of homomorphism $\ovl \phi: G_{\ovl{\Gamma(\phi_{\mr{tor}})}}\rightarrow \ovl G'$ as constructed in \cite[Construction 7]{Kuhne2020bounded}. Here $\Gamma(\phi_{\mr{tor}})\subset \mbb G_m^t\times \mbb G_m^{t'}$ denotes the graph of $\phi_{\mr{tor}}$, and $\ovl{\Gamma(\phi_{\mr{tor}})}$ is its closure in $(\mbb P^1)^t\times (\mbb P^1)^{t'}.$  $G_{\ovl{\Gamma(\phi_{\mr{tor}})}}$ is the $\mbb G_m^t$-equivariant compactification of $G$. In particular, if $\phi_{\mr{tor}}$ is the multiplication by $n$ map for some positive integer $n$, then $G_{\ovl{\Gamma(\phi_{\mr{tor}})}}=\ovl G$. We refer the reader to loc. cit. for the precise definition of this compactification.

If $\mathbf k'/\mathbf k$ is a field extension, we denote by $\underline \eta\ot_{\mathbf k}\mathbf k'$ the base change of $\underline \eta$ via the natural map $\mr{Ext}_{\mathbf k'}^1(A_{\mathbf k'},\mbb G_m^t)\rightarrow \mr{Ext}_{\mathbf k}^1(A_{\mathbf k},\mbb G_m^t)$.
\subsection{Chow $\ovl K/k$-traces}\label{subsec_trace}
From now on, assume that $G,G',A,A'$ in previous subsections are defined over $K$.
We denote by $(A_{\ovl K/k},\mr{tr})$ the $\ovl K/k$-trace of $A_{\ovl K}$, that is, the final object of the category of pairs $(B,h)$, where:\begin{itemize}
    \item $B$ is an abelian variety over $k$,
    \item $h:B\ot_k\ovl K\rightarrow A_{\ovl K}$ is a morphism of abelian varieties.
\end{itemize} Note that $\mr{tr}$ has finite kernel. For more details, we refer the reader to \cite[\S 8.3]{Lang1959AV} or \cite[\S 6]{Conrad2006trace}. 

Similarly, for a semiabelian variety $G_{\ovl K}$, we can define the category of pairs $(G_0,h)$ where $G_0$ is a semiabelian variety over $k$, and $h$ is a morphism from $G_0\ot_k\ovl K$ to $G_{\ovl K}$. The existence of the final object is established in \cite{long2024trace}. In this article, we only use the following result, which is used to prove Proposition \ref{prop_special_isogeny}.

\begin{prop}\label{prop_trace_isogeny}
    Let $\phi:G_{\ovl K}\rightarrow G_{\ovl K}'$ be an isogeny.
    Let $H$ be a semiabelian variety over $k$, and $f:H\ot_k \ovl K\rightarrow G'_{\ovl K}$ be a morphism with finite kernel. Then there exist
    \begin{enumerate}
        \item[\textnormal{(i)}] semiabelian varieties $G_0$ and $G_0'$ over $k$, and an isogeny $\psi:G_0\rightarrow G_0',$
        \item[\textnormal{(ii)}] homomorphisms $h:G_0\ot_k \ovl K\rightarrow G_{\ovl K}$, and $h':G_0'\ot_k\ovl K\rightarrow G'_{\ovl K}$, each with finite kernel,
        \item[\textnormal{(iii)}] a homomorphism $g: H\rightarrow G_0'$.
    \end{enumerate} such that $f=h'\circ (g\ot_k\ovl K)$, and the following diagram is commutative:
    \[\begin{tikzcd}
     &G_0\ot \ovl K\arrow[r,"\psi\ot_k\ovl K"]\arrow[d,"h"] &G'_0\ot \ovl K\arrow[d,"h'"]\\
     & G \arrow[r,"\phi"] &G'.
    \end{tikzcd}
    \]
\end{prop}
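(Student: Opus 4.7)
The plan is to construct the required data by pulling $\phi$ back along $f$ and descending the result via the $\ovl K/k$-trace. Throughout, I take $G_0':=H$, $g:=\mr{id}_H$, and $h':=f$; the factorization $f=h'\circ(g\ot_k\ovl K)$ is then automatic, so the task reduces to producing a semiabelian variety $G_0/k$, an isogeny $\psi:G_0\rightarrow H$ over $k$, and a homomorphism $h:G_0\ot_k\ovl K\rightarrow G_{\ovl K}$ of finite kernel, making the square commute.

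\textbf{Pullback.} Form the fibered product of group schemes over $\ovl K$,
\[
\tilde H:=(H\ot_k\ovl K)\times_{G'_{\ovl K}} G_{\ovl K},
\]
with projections $p_1,p_2$. Since $\phi$ is an isogeny, $p_1$ is surjective with finite kernel $\{0\}\times\ker(\phi)$; and since $f$ has finite kernel, so does $p_2$. Let $\tilde H^0$ denote the reduced neutral component of $\tilde H$, a semiabelian subvariety of $(H\ot_k\ovl K)\times G_{\ovl K}$ of dimension $\dim H$. Because $p_1|_{\tilde H^0}$ is a homomorphism with finite kernel from a semiabelian variety of dimension $\dim H$ into the connected semiabelian variety $H\ot_k\ovl K$ of the same dimension, it is an isogeny; and $p_2|_{\tilde H^0}$ still has finite kernel.

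\textbf{Trace and descent.} Let $(G_0,\mr{tr})$ be the $\ovl K/k$-trace of $\tilde H^0$, provided by \cite{long2024trace}; by universality $\mr{tr}:G_0\ot_k\ovl K\rightarrow \tilde H^0$ has finite kernel. To upgrade $\mr{tr}$ to an isogeny, let $n:=\deg(p_1|_{\tilde H^0})$. As $\ker(p_1|_{\tilde H^0})\subset \tilde H^0[n]$, the map $[n]_{\tilde H^0}$ factors as $\sigma\circ p_1|_{\tilde H^0}$ for some $\sigma:H\ot_k\ovl K\rightarrow \tilde H^0$. Applying the universal property to $\sigma$ yields $s:H\rightarrow G_0$ over $k$ with $\mr{tr}\circ(s\ot_k\ovl K)=\sigma$; since $\mr{tr}\circ(s\ot_k\ovl K)\circ p_1|_{\tilde H^0}=[n]_{\tilde H^0}$ is surjective, so is $\mr{tr}$. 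The composition
\[
p_1|_{\tilde H^0}\circ\mr{tr}:G_0\ot_k\ovl K\rightarrow H\ot_k\ovl K
\]
is thus an isogeny between constant semiabelian varieties. Splitting this morphism into torus and abelian parts via the discussion preceding Proposition \ref{prop_trace_isogeny} and invoking rigidity of each piece (integer matrices for tori, classical rigidity for abelian varieties, compatibility of extension data being an equation in $k$-points), it descends to an isogeny $\psi:G_0\rightarrow H$ over $k$. Finally, set $h:=p_2\circ\mr{tr}$; the defining relation $\phi\circ p_2=f\circ p_1$ on $\tilde H$, composed with $\mr{tr}$, gives precisely $\phi\circ h=f\circ(\psi\ot_k\ovl K)$.

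The main obstacle is the trace step: beyond invoking \cite{long2024trace} for the existence of the $\ovl K/k$-trace in the semiabelian setting, one must further establish that here $\mr{tr}$ is surjective -- a strengthening of the universal property that uses crucially the existence of the quasi-inverse $\sigma$ of the isogeny $p_1|_{\tilde H^0}$. Once this is in hand, the descent of $p_1|_{\tilde H^0}\circ\mr{tr}$ to $k$ and the commutativity of the square are formal consequences of rigidity and of the definition of the fibered product.
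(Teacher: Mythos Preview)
Your argument is correct and conceptually clean, but it follows a genuinely different route from the paper. You take $G_0':=H$ and reduce everything to constructing $G_0$ via the fibered product $\tilde H=(H\ot_k\ovl K)\times_{G'_{\ovl K}}G_{\ovl K}$, then descend its neutral component through the $\ovl K/k$-trace of \emph{semiabelian} varieties, citing \cite{long2024trace}. The surjectivity of $\mr{tr}$ is obtained by the quasi-inverse trick (using that the finite kernel of $p_1|_{\tilde H^0}$ is killed by its own order, hence $[n]$ factors through $p_1|_{\tilde H^0}$), and the descent of the resulting isogeny between constant semiabelian varieties is handled by rigidity on the toric and abelian pieces together with injectivity of $A_0^\vee(k)\hookrightarrow A_0^\vee(\ovl K)$. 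All of this is sound.

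The paper, by contrast, works entirely through the Weil--Barsotti data: it passes to the abelian quotients, uses only the classical $\ovl K/k$-trace of \emph{abelian} varieties to place $A_0'$ inside $A'_{\ovl K/k}$, builds $G_0'$ (with torus rank $t$, not $t'$) from the pushed-forward extension class $f_{\mr{tor},*}(\underline\eta_H)$, and then constructs $G_0$ by pulling back along $\mr{tr}(\phi_{\mr{ab}})$ and adjusting $\underline\eta_0$ by a torsion class so that the compatibility $\psi_{\mr{tor},*}\underline\eta_0=\psi_{\mr{ab}}^*\underline\eta_0'$ holds over $k$. The paper's approach is more explicit and, crucially, does \emph{not} invoke the semiabelian trace from \cite{long2024trace} as a black box (indeed, the paper emphasizes that it only uses this proposition rather than the full trace machinery). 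Your proof is shorter and more categorical, but it imports a stronger external input; the paper's hands-on construction is more self-contained and makes the dependence on classical trace theory transparent.
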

\begin{proof}
    Assume that $H$ is defined by the exact sequence 
    \begin{equation*}\label{eq_exact_G'_0}
        0\rightarrow \mathbb G_m^{t'}\rightarrow H\rightarrow A_0'\rightarrow 0.
    \end{equation*}
    Let $(A_{\ovl K/k},\mr{tr})$ (resp. $(A'_{\ovl K/k},\mr{tr}')$) be the $\ovl K/k$-trace of $A$ (resp. $A'$). By the universal property of the trace, we may assume that $A'_0\subset A'_{\ovl K/k},$ that is, $$f_{\mr{ab}}=\mr{tr}'|_{A'_0\ot_k \ovl K}.$$

    Take $\underline\eta'\in \mr{Ext}^1_{\ovl K}(A'_{\ovl K},\mathbb G_m^t)$ and $\underline\eta_{H}\in \mr{Ext}^1_{k}(A_0',\mathbb G_m^{t'})$) which represent the extensions that define $G'_{\ovl K}$ and $H$ respectively. Then we have $$f_{\mr{ab}}^*\underline \eta'=f_{\mr{tor},*}(\underline\eta_H\ot \ovl K)=f_{\mr{tor},*}(\underline\eta_H)\ot \ovl K.$$ 
    Let $\underline \eta_0':=f_{\mr{tor},*}(\underline\eta_H)$ and $G_0'$ be the semiabelian variety over $k$ defined by $\underline \eta'_0.$
    There exists a homomorphism $h':G_0'\ot_k \ovl K\rightarrow G'_{\ovl K}$ induced by $h'_{\mr{tor}}:=\mr{id}_{\mathbb G_m^t}$ and $h'_{\mr{ab}}:=f_{\mr{ab}}.$ Therefore $f$ factors through $h'$.
    
    Since $\phi_{\mr{ab}}$ is an isogeny, there exists an isogeny $\mr{tr}(\phi_{\mr{ab}}):A_{\ovl K/k}\rightarrow A'_{\ovl K/k}$, such that $\mr{tr'}\circ \mr{tr}(\phi_{\mr{ab}})_{\ovl K}=\phi_{\mr{ab}}\circ \mr{tr}.$ Let $A_0$ be the neutral component of $\mr{tr}(\phi_{\mr{ab}})^{-1}(A_0')$. If we set $h_{\mr{ab}}:=\mr{tr}|_{A_0\ot_k \ovl K}:A_0\ot_k \ovl K\rightarrow A_{\ovl K},$ then
    \begin{flalign*}
    \xymatrix@C+4pc{ A_0\ot_k\ovl K  \ar[r]^{(\mr{tr}(\phi_{\mr{ab}}))|_{A_0})\ot_k\ovl K} \ar[d]^{h_{\mr{ab}}} &  A_0'\ot_k\ovl K\ar[d]^{h_{\mr{ab}}'} \\
    A_{\ovl K}\ar[r]^{\phi_{\mr{ab}}} &A'_{\ovl K}}
    \end{flalign*}

    Let $h_{\mr{tor}}=\mr{id}_{\mbb G_m^t}$ and $\psi_{\mr{tor}}=\phi_{\mr{tor}}.$ Since $k$ is algebraically closed, there exists $\underline \eta_0\in \mr{Ext}^1_{k}(A_0,\mathbb G_m^{t})$ such that $\psi_{\mr{tor},*}(\underline \eta_0)=\mr{tr}(\phi_{\mr{ab}})^*(\underline \eta_0')$. 
    By diagram chasing, we verify:
    \begin{align*}
        &\psi_{\mr{tor},*}(\underline\eta_0\ot_k\ovl K)=\mr{tr}(\phi_{\mr{ab}})^*(\underline \eta_0'\ot_k\ovl K)=\mr{tr}(\phi_{\mr{ab}})_{\ovl K}^*{h'_{\mr{ab}}}^*(\underline \eta')
        \\&\kern 1em=\mr{tr}|_{A_0\ot_k \ovl K}^*\phi_{\mr{ab}}^*(\underline \eta')
        =\mr{tr}|_{A_0\ot_k\ovl K}^*\phi_{\mr{tor},*}(\underline\eta)
        =\phi_{\mr{tor},*}\mr{tr}|_{A_0\ot_k \ovl K}^*(\underline\eta)
        =\phi_{\mr{tor},*}h_{\mr{ab}}^*(\underline \eta).
    \end{align*}
    Since $\psi_{\mr{tor}}$ is an isogeny, the kernel of $\psi_{\mr{tor},*}:\mr{Ext}^1_{\ovl K}(A_0\ot_k\ovl K,\mbb{G}_m^t)\rightarrow \mr{Ext}^1_{\ovl K}(A_0\ot_k\ovl K,\mbb{G}_m^t)$ is finite. Since the torsion points of $\mr{Ext}^1_{\ovl K}(A_0\ot_k\ovl K,\mbb{G}_m^t)\simeq (A_0^\vee)^t_{\ovl K}$ coincide with torsion points in $\mr{Ext}^1_{k}(A_0,\mbb{G}_m^t)\simeq (A_0^\vee)^t$, we can adjust $\underline \eta_0$ such that $\eta_0\ot\ovl K=h^*_{\mr{ab}}(\underline \eta).$
    Let $G_0$ be the semiabelian variety defined by $\underline \eta_0.$
    We thus obtain a homomorphism $h:G_0\ot_k \ovl K\rightarrow G_{\ovl K},$ induced by $h_{\mr{tor}}$ and $h_{\mr{ab}}.$
    Then $h$ has finite kernel, and the isogeny $$\phi:G_0\rightarrow G_0'$$ given by $\psi_{\mr{tor}}$ and $\psi_{\mr{ab}}:=\mr{tr}(\phi_{\mr{ab}})|_{A_0}$ satisfies $\phi\circ h=h'\circ (\psi\ot_k \ovl K)$.
    This concludes the proof.
\end{proof}
\subsection{Special subvarieties}\label{subsec_special}
Let $X\subset G_{\ovl K}$ be a closed subvariety. Recall that we say $X$ is special if
there exist
        \begin{itemize}
            \item a torsion point $x$ in $\widetilde G=G_{\ovl K}/\mr{Stab}_0(X)$,
            \item a semiabelian variety $G_0$ over $k$ with a homomorphism $h:G_0\ot_k \ovl K\rightarrow \widetilde G_{\ovl K}$ of finite kernel,
            \item a closed $k$-subvariety $X_0\subset G_0$,
        \end{itemize}  
        such that $X/\mr{Stab}_0(X)=h(X_0\ot_{k}\ovl K)+x.$ Here we recall that  $\mr{Stab}_0(X)$ is the irreducible component of $\mr{Stab}(X)$ which contains the identity, as explained in the introduction. 
\begin{prop}\label{prop_special_isogeny}
    Let $\phi:G_{\ovl K}\rightarrow G'_{\ovl K}$ be an isogeny of semiabelian varieties. Let $X\subset G_{\ovl K}$ be a closed subvariety, then $X$ is special if and only if so is $\phi(X)$.
\end{prop}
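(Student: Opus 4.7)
The plan is to reduce everything to an induced isogeny between the quotients by the stabilizers, and then to invoke Proposition \ref{prop_trace_isogeny} for the nontrivial direction. First I would verify that $\phi(\mr{Stab}(X)) = \mr{Stab}(\phi(X))$: the inclusion $\subseteq$ is clear, and for $\supseteq$, if $\phi(y)+\phi(X)=\phi(X)$ then $y+X$ is an irreducible subvariety of $\phi^{-1}(\phi(X)) = X+\ker\phi$ of dimension $\dim X$, so $y+X = X+\epsilon$ for some $\epsilon\in\ker\phi$, whence $\phi(y)=\phi(y-\epsilon)\in\phi(\mr{Stab}(X))$. Because $\phi$ is finite, $\phi(\mr{Stab}_0(X))$ and $\mr{Stab}_0(\phi(X))$ are connected subgroups of equal dimension, so they coincide. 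Consequently, $\phi$ descends to an isogeny $\widetilde\phi\colon \widetilde G\to \widetilde G'$, where $\widetilde G' := G'_{\ovl K}/\mr{Stab}_0(\phi(X))$, and $\widetilde\phi$ sends $X/\mr{Stab}_0(X)$ onto $\phi(X)/\mr{Stab}_0(\phi(X))$.

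The direction $(\Rightarrow)$ is then immediate: if $X/\mr{Stab}_0(X) = h(X_0\otimes_k\ovl K)+x$ with the data provided by the definition, then applying $\widetilde\phi$ yields $\phi(X)/\mr{Stab}_0(\phi(X)) = (\widetilde\phi\circ h)(X_0\otimes_k\ovl K)+\widetilde\phi(x)$, where $\widetilde\phi\circ h$ still has finite kernel and $\widetilde\phi(x)$ is torsion, exhibiting $\phi(X)$ as special.

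For $(\Leftarrow)$, write $\phi(X)/\mr{Stab}_0(\phi(X)) = f(X'_0\otimes_k\ovl K)+x'$ with special datum $(H,f,X'_0,x')$. I would apply Proposition \ref{prop_trace_isogeny} to the isogeny $\widetilde\phi$ and the finite-kernel map $f$, producing semiabelian $k$-varieties $G_0,G'_0$, an isogeny $\psi\colon G_0\to G'_0$, finite-kernel maps $h\colon G_0\otimes\ovl K\to\widetilde G$ and $h'\colon G'_0\otimes\ovl K\to\widetilde G'$, and $g\colon H\to G'_0$ with $f=h'\circ(g\otimes\ovl K)$ and $\widetilde\phi\circ h = h'\circ(\psi\otimes\ovl K)$. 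Choose any preimage $\widetilde x\in\widetilde G$ of $x'$; since $\widetilde\phi$ is an isogeny and $x'$ is torsion, $\widetilde x$ is torsion as well. Using the two compatibility identities together with the surjectivity of $\psi$, I would verify the set-theoretic identity
\[
\widetilde\phi^{-1}\!\left(f(X'_0\otimes_k\ovl K)+x'\right) = h\!\left(\psi^{-1}(g(X'_0))\otimes_k\ovl K\right) + \widetilde x + \ker\widetilde\phi,
\]
whose irreducible components have the form $h(X_0\otimes_k\ovl K)+(\widetilde x+\epsilon)$ for an irreducible $k$-component $X_0$ of $\psi^{-1}(g(X'_0))\subset G_0$ (such a component is a $k$-subvariety because $k$ is algebraically closed) and some $\epsilon\in\ker\widetilde\phi$. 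Since $X/\mr{Stab}_0(X)$ is irreducible and is mapped onto $\phi(X)/\mr{Stab}_0(\phi(X))$ by the finite morphism $\widetilde\phi$, it must equal one such component; the torsion point $\widetilde x+\epsilon$ then exhibits $X$ as special.

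The main obstacle is the reverse direction: carrying out the preimage computation above and isolating the correct irreducible component over $k$, while keeping the torsion translate and the finite-kernel map aligned. Once Proposition \ref{prop_trace_isogeny} supplies the $k$-theoretic descent of the trace datum across $\widetilde\phi$, the remaining bookkeeping with components and torsion is routine.
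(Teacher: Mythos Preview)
Your proposal is correct and follows essentially the same route as the paper: pass to the quotients by the neutral stabilizers, observe that $\widetilde\phi$ is an isogeny, handle the forward direction directly, and for the reverse direction invoke Proposition~\ref{prop_trace_isogeny} to descend the constant datum across $\widetilde\phi$ and then identify $\widetilde X$ among the irreducible components of $\widetilde\phi^{-1}(\widetilde Y)$ up to a torsion translate. You supply more detail than the paper does (the explicit computation $\phi(\mr{Stab}_0(X))=\mr{Stab}_0(\phi(X))$ and the full set-theoretic description of $\widetilde\phi^{-1}(\widetilde Y)$, where the paper records only the inclusion $h(\psi^{-1}(Y_0)\otimes_k\ovl K)\subset\widetilde\phi^{-1}(\widetilde Y)$ and argues by dimension), but the argument is the same.
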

\begin{proof}
    Let $Y:=\phi(X).$ Define $\widetilde G:=G/\mr{Stab}_0(X)$ (resp. $\widetilde G':=G/\mr{Stab}_0(Y)$), and $\widetilde X:=X/\mr{Stab}_0(X)$ (resp. $\widetilde Y:=Y/\mr{Stab}_0(Y)$).
    Note that the homomorphism $\widetilde\phi:\widetilde G\rightarrow \widetilde G'$ is an isogeny. Thus, by definition, $Y$ is special if so is $X$.

    We may assume that there exists a semiabelian variety $H$ over $k$, and a morphism $f:H\ot_k\ovl K\rightarrow G'_{\ovl K}$ with finite kernel, such that $\widetilde Y=f(Y_0\ot_k\ovl K)$ for some closed subvariety $Y_0\subset H$. Applying Proposition \ref{prop_trace_isogeny} to $\widetilde\phi$, we obtain $h,h',G_0,G_0',\psi$.
    We may thus assume that $Y_0\subset G_0'$ by replacing $Y_0$ with its image in $G_0'.$

    Therefore $$h(\psi^{-1}(Y_0)\ot_k\ovl K)\subset \widetilde\phi^{-1}(h'(Y_0\ot_k \ovl K))=\widetilde\phi^{-1}(\widetilde Y),$$
    Hence there is an irreducible component $X_0$ of $h(\widetilde \phi)^{-1}(Y_0)$ such that $h(X_0\ot_k\ovl K)$ is an irreducible component of $\widetilde \phi^{-1}(\widetilde Y)$. On the other hand, since $\widetilde X$ is also an irreducible component of $\widetilde \phi^{-1}(\widetilde Y)$, there exists a torsion point $x\in \widetilde G$ such that $\widetilde X=h(X_0\otimes_k \ovl K)+x$.

\end{proof}

\subsection{Essential minimum}
Let $X\subset G_{\ovl K}$ be a closed subvariety.
The essential minimum of $L$ on $X$ is defined as 
$$\mr{ess}_{\ovl L}(X):=\mr{ess}_{\ovl L}(\ovl X)$$
where $\ovl X$ is the Zariski closure of $X$ in $\ovl G_{\ovl K}$.
Hence $\mr{ess}_{\ovl L}(X)\geq 0$, and equality holds if and only if $X$ contains Zariski dense sets of points of arbitrarily small canonical heights.
\begin{lemm}\label{lemm_ess_isogeny} Let $\phi:G\rightarrow G'$ be a homomorphism of semiabelian varieties. 
Let $N'$ be a symmetric ample line bundle on $A'$ and $M'$ be line bundle associated with the boundary divisor on $\ovl G'$. Let $L'=M'+N'$ be the ample line bundle on $\ovl G'$.
Let $Y=\phi(X)$. Then the following statements hold:
    \begin{enumerate}
        \item[\textnormal{(1)}] $\mr{ess}_{\ovl L'}(Y)=0$ if $\mr{ess}_{\ovl L}(X)=0$. 
        \item[\textnormal{(2)}] If $\phi$ is an isogeny, then $\mr{ess}_{\ovl L}(X)=0$ if $\mr{ess}_{\ovl L'}(Y)=0$. 
    \end{enumerate}
\end{lemm}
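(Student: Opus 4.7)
For \textnormal{(1)}, the plan is to produce a constant $C>0$ with $\widehat h_{L'}(\phi(x))\le C\widehat h_L(x)$ for every $x\in G(\ovl K)$; since $\phi|_X\colon X\to Y$ is surjective, the image of a Zariski-dense subset of $X$ is Zariski-dense in $Y$, so such a bound immediately transfers the density of small points. I would split $\widehat h_{L'}(\phi(x))=\widehat h_{M'}(\phi(x))+\widehat h_{N'}(\phi_{\mr{ab}}(\pi(x)))$ and treat the two summands separately. The abelian summand is standard: $\phi_{\mr{ab}}^*N'$ is symmetric and nef on $A$, so for sufficiently large $c_1>0$ the line bundle $c_1N-\phi_{\mr{ab}}^*N'$ is ample symmetric, making $c_1\ovl N-\phi_{\mr{ab}}^*\ovl{N'}$ a nef adelic line bundle and yielding $\widehat h_{N'}(\phi_{\mr{ab}}(\pi(x)))\le c_1\widehat h_N(\pi(x))$. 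For the toric summand I would use the compactified homomorphism $\ovl\phi\colon G_{\ovl{\Gamma(\phi_{\mr{tor}})}}\to\ovl G'$ of \cite[Construction~7]{Kuhne2020bounded} together with the birational projection $p\colon G_{\ovl{\Gamma(\phi_{\mr{tor}})}}\to\ovl G$ induced by the first factor of $\ovl{\Gamma(\phi_{\mr{tor}})}\subset(\mbb P^1)^t\times(\mbb P^1)^{t'}$. Boundary divisors of $G_{\ovl{\Gamma(\phi_{\mr{tor}})}}$ are $\mbb G_m^t$-invariant and correspond to rays $\rho$ of the fan of $\ovl{\Gamma(\phi_{\mr{tor}})}$; on $D_\rho$ the multiplicity of $p^*M$ is the $\ell^1$-norm $\|\rho\|_1$, while that of $\ovl\phi^*M'$ is $\|\phi_{\mr{tor}}(\rho)\|_1$, so taking $c_2$ at least the $\ell^1$-operator norm of the integer matrix $\phi_{\mr{tor}}$ makes $c_2p^*M-\ovl\phi^*M'$ effective. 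Translating this into the Tate-limit adelic structures gives $\widehat h_{M'}(\phi(x))\le c_2\widehat h_M(x)$, and $C=c_1+c_2$ completes part \textnormal{(1)}.

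For \textnormal{(2)}, assume $\phi$ is an isogeny. Its kernel is a finite commutative group scheme, of some order $n_0$, and is thus annihilated by $[n_0]$; the universal property of the quotient supplies a unique isogeny $\psi\colon G'\to G$ with $\psi\circ\phi=[n_0]$. Applying part \textnormal{(1)} to $\psi$ yields a constant $C>0$ such that $\widehat h_L(\psi(y))\le C\widehat h_{L'}(y)$ for every $y\in G'(\ovl K)$. Combined with
\[\widehat h_L([n_0]x)=n_0\widehat h_M(x)+n_0^2\widehat h_N(\pi(x))\ge n_0\widehat h_L(x),\]
this yields the reverse bound $\widehat h_L(x)\le (C/n_0)\widehat h_{L'}(\phi(x))$. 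Fix $\epsilon>0$: every point of $Y((n_0/C)\epsilon)$ lifts through the finite surjection $\phi|_X\colon X\to Y$ to some point of $X$ of canonical height $<\epsilon$, and because $\phi|_X$ is finite, the Zariski closure in $X$ of any lift of a Zariski-dense subset of $Y$ surjects onto $Y$, hence has dimension $\dim X$, and therefore equals the irreducible variety $X$. Thus $\mr{ess}_{\ovl L'}(Y)=0$ forces $\mr{ess}_{\ovl L}(X)=0$.

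The main technical obstacle is the Tate-limit step in part \textnormal{(1)}: the divisor inequality $c_2p^*M-\ovl\phi^*M'\ge 0$ is elementary toric combinatorics on $G_{\ovl{\Gamma(\phi_{\mr{tor}})}}$, but one still has to check that the canonical Green functions of $\ovl M$ and $\ovl{M'}$ pull back compatibly so that this underlying divisor inequality genuinely upgrades to a comparison of canonical heights at every point of $G(\ovl K)$.
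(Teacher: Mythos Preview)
Your argument is correct. For part \textnormal{(1)}, you and the paper are doing the same thing: bound the abelian and toric summands separately, with the toric constant being the $\ell^1$-operator norm of $\phi_{\mr{tor}}$ (the paper writes this as $\max_i\sum_j|a_{i,j}|$, citing \cite{Remond2003Approx}). The paper handles the compactification issue you flag at the end not via $G_{\ovl{\Gamma(\phi_{\mr{tor}})}}$ but by factoring $\phi$ through an intermediate $G_1$ with torus $\mbb G_m^{t'}$ and abelian part $A$: the map $G_1\to G'$ has identity toric part, so it extends to $\ovl G_1\to\ovl G'$ and pulls $M'$ back exactly to the boundary divisor of $\ovl G_1$; the map $G\to G_1$ has identity abelian part, so the toric height comparison reduces to the fiberwise torus computation. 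This two-step factorization sidesteps the Green-function compatibility check you single out as the main obstacle.

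For part \textnormal{(2)} you take a genuinely shorter route. The paper does not produce a dual isogeny $\psi\colon G'\to G$; instead it builds an auxiliary $G''$ (torus $\mbb G_m^t$, abelian part $A'$) and an isogeny $\psi\colon G'\to G''$ with $\psi_{\mr{ab}}=\mr{id}_{A'}$ and $(\psi\circ\phi)_{\mr{tor}}=[n]$, so that $\psi\circ\phi$ extends to $\ovl G\to\ovl G''$ with $\ovl{\psi\circ\phi}^*M''=nM$. Heights on $G$ and $G''$ are then compared directly, and part \textnormal{(1)} is applied to $\psi$ to pass from $G''$ back to $G'$. Your version---invoke Deligne's theorem to get $\psi\colon G'\to G$ with $\psi\circ\phi=[n_0]$, apply \textnormal{(1)} to $\psi$, and use $\widehat h_L([n_0]x)\ge n_0\widehat h_L(x)$---avoids the auxiliary variety entirely. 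The paper's detour through $G''$ buys an explicit extension to compactifications, but since your argument only uses the height comparison from \textnormal{(1)} (which holds on $G'(\ovl K)$ regardless of compactifications), that advantage is not needed.
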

\begin{proof}
(1) 
We have a decomposition $\phi=\phi_1\circ\phi_2$ where $\phi_1:G\rightarrow G_1$ is given by the pair $(\phi_{\mr{tor}},\mr{id}_A)$, and $\phi_2:G_1\rightarrow G'$ is given by the pair $(\mr{id}_{\mbb G_m^{t'}},\phi_{\mr{ab}}).$

Since $N$ is ample, $\phi_{\mr{ab}}^*N'\leq c_2N$ for some $c_2>0$. Hence $c_2 h_{\ovl N}(\ovl \pi(x))\geq h_{\ovl N'}(\ovl \pi'(\phi_2(x)))$ for $x\in \ovl G_1(\ovl K).$ 

Note that the line bundle associated with the boundary divisor on $\ovl G_1$ is exactly $\phi_2^* M'.$
We then use the same argument as in \cite[Lemme 5.2]{Remond2003Approx}. Consider the standard coordinates $X_1,\cdots,X_t$ of $\mbb G_m^t$, and $Y_1,\cdots, Y_{t'}$ of $\mbb G_m^{t'}.$ Assume that $\phi_{\mr{tor}}^*Y_i=\prod_{j}X_j^{a_{i,j}},$ where $a_{i,j}\in\Z.$ Therefore $$h_{\ovl M'}(\phi_1(x))\leq c_1 h_{\ovl M}(x)$$
for $x\in G(\ovl K)$ where $c_1:=\displaystyle\max_{1\leq i\leq t'}\left\{\sum_{1\leq j\leq t}\lvert a_{i,j}\rvert\right\}.$ Hence $h_{\ovl L'}\leq \max\{c_1,c_2\}h_{\ovl L}$ on $G(\ovl K).$

(2) 
Since $\phi$ is an isogeny, after taking a finite extension of $K$, there exists an isogeny $\psi:G'\rightarrow G''$ of semiabelian varieties, such that $\psi_{\mr{ab}}=id_{A'}$ and $(\psi\circ \phi)_{\mr{tor}}=[n]:\mbb G_m^t\rightarrow \mbb G_m^t$ for some $n\in \N_+.$
Consider the isogeny $\psi\circ \phi:G\rightarrow G''$, which extends to $\ovl {\psi\circ \phi}:\ovl G\rightarrow \ovl G''.$

Set $N'':=N'$. Then $(\psi\circ \phi)_{\mr{ab}}^* N''$ is also ample, and $$\ovl{\psi\circ \phi}^*M''=nM$$ where $M''$ is the line bundle associated with the boundary divisor on $\ovl G''.$

We can take $c\gg 0$ such that $c\phi_{\mr{ab}}^* N''-N$ and $c\ovl \phi^* M''-M$ are effective. Let $\ovl L'':=\ovl M''+\ovl \pi''^*\ovl N''$ where $\ovl \pi'':\ovl G''\rightarrow A'$. Then $h_{\ovl L}(x)\leq ch_{\ovl L''}(\psi\circ\phi(x))$ for $x\in G(\ovl K).$

Applying the result in (1), we know that $h_{\ovl L''}$ can be bounded by a multiple of $h_{\ovl L'}$ on $G'(\ovl K).$ Eventually, $h_{\ovl L}$ can be bounded by a multiple of $h_{\ovl L'}$ on $ G(\ovl K)$.
\end{proof}

To sum up, Proposition \ref{prop_special_isogeny} and Lemma \ref{lemm_ess_isogeny} give the following.
\begin{prop}
    Let $\phi:G\rightarrow G'$ be an isogeny. The geometric Bogomolov conjecture holds for $G$ if and only if it holds for $G'$.
\end{prop}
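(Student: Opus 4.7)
The plan is to read off the proposition as a direct consequence of Proposition \ref{prop_special_isogeny} and Lemma \ref{lemm_ess_isogeny}, which together show that both sides of the \textbf{(GBC)} equivalence are preserved under an isogeny. First I would recall, from the discussion preceding the proposition, that for any closed subvariety $X \subset G_{\ovl K}$ the condition that $X$ contains Zariski dense sets of small points is equivalent to $\mr{ess}_{\ovl L}(X) = 0$, and similarly for subvarieties of $G'_{\ovl K}$. Hence \textbf{(GBC)} for $G$ is simply the assertion that $\mr{ess}_{\ovl L}(X) = 0$ if and only if $X$ is special, for every closed $X \subset G_{\ovl K}$, and likewise for $G'$.

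To transport this equivalence across $\phi$, I would use that $\phi$, being an isogeny, is finite and surjective: for any closed subvariety $Y \subset G'_{\ovl K}$, every irreducible component $X$ of $\phi^{-1}(Y)$ then satisfies $\phi(X) = Y$. Assuming \textbf{(GBC)} holds for $G$, I would chain the equivalences
\[
\mr{ess}_{\ovl L'}(Y) = 0 \;\Longleftrightarrow\; \mr{ess}_{\ovl L}(X) = 0 \;\Longleftrightarrow\; X \text{ is special} \;\Longleftrightarrow\; Y \text{ is special},
\]
where the first step uses both halves of Lemma \ref{lemm_ess_isogeny} (available precisely because $\phi$ is an isogeny), the second is \textbf{(GBC)} for $G$, and the third is Proposition \ref{prop_special_isogeny}. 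The converse direction is symmetric: given a closed $X \subset G_{\ovl K}$, set $Y := \phi(X)$ and run the same chain of equivalences using \textbf{(GBC)} for $G'$ at the middle step.

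There is no substantive obstacle: the statement is a formal corollary of the two prior results. The only mild care needed is in the dictionary between closed subvarieties of $G_{\ovl K}$ and $G'_{\ovl K}$, namely that $\phi(X)$ is automatically a closed irreducible subvariety because $\phi$ is finite, and conversely that one may always pick an irreducible component of $\phi^{-1}(Y)$ mapping surjectively onto $Y$; the choice of such a component is immaterial since both the essential minimum and the property of being special are preserved under the equivalences above.
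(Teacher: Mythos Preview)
Your proposal is correct and follows exactly the paper's approach: the paper's proof is the one-line remark that Proposition \ref{prop_special_isogeny} and Lemma \ref{lemm_ess_isogeny} together yield the result, and you have simply spelled out the straightforward chain of equivalences that this entails.
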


We will postpone the proof that $X$ is special implies $\mr{ess}_{\ovl L}(X)=0$ to \S\ref{sec_REL_FZ}. Note that this is not as trivial as in the case of abelian varieties, since a special subvariety in a semiabelian variety may not have Zariski dense points of canonical height $0$. 

\section{Yamaki's relative height revisited}\label{sec_YMK_RVS}
We prove {\bf{(GBC)}} for the quasi-split case in this section, to which the general case will be reduced in \S\ref{sec_REL_FZ}.

Throughout this section, let $Y$ be a projective variety defined over $k$. Let $M_Y$ be a line bundle over $Y_K$. We say $M_Y$ is \emph{constant} if there exists a line bundle $M_0$ on $Y$ such that $M_Y=M_0\ot_k K$. Set that $\mc Y:=Y\times_k \mathfrak B$ and $\mc M_Y:=M_0\ot_k \mc O_{\mc Y}.$
We see that $(\mc Y,\mc M_Y)$ is a $\mathfrak B$-model of $(Y_K,M_Y)$. We denote by $\ovl {M_Y}$ the induced adelic line bundle. From now on, we assume that $M_0$ is ample on $Y$.
\begin{prop}\label{prop_trivial_gbc}
   Let $X\subset Y_{\ovl K}$ be a closed subvariety. If $h_{\ovl {M_Y}}(X)=0$ then $X=W_{\ovl K}$ for some closed $k$-subvariety $W\subset Y$. 
\end{prop}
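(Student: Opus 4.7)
Assume $M_0$ is ample on $Y$ (the meaningful case; for trivial $M_0$ the hypothesis is vacuous while the conclusion fails). The strategy is to translate the height hypothesis into a geometric assertion via intersection theory on the Zariski closure of $X$ in the natural model, and then to prove the geometric assertion by a dimension count.

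Pass to a finite extension $K'/K$ over which $X$ is defined; let $\mathfrak B'$ be the normalization of $\mathfrak B$ in $K'$ and let $\mc X \subset Y \times_k \mathfrak B'$ be the Zariski closure of $X$. Then $\mc X$ is irreducible of dimension $\dim X + \dim \mathfrak B$, with generic fibre $X$ over $\mathfrak B'$. Write $p_1\colon \mc X \to Y$ and $q\colon \mc X \to \mathfrak B'$ for the projections, and set $W := p_1(\mc X)$ with its reduced structure. Since $X \subset W_{\ovl K}$, we have $\dim X \leq \dim W$, and it suffices to prove $\dim W = \dim X$: the inclusion $\mc X \subset W \times_k \mathfrak B'$ of irreducible schemes of equal dimension then forces $\mc X = W \times_k \mathfrak B'$, and hence $X = W_{\ovl K}$. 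Because $\mc M_Y|_{\mc X} = (p_1|_{\mc X})^* M_0$, the projection formula rewrites the intersection number computing the height as
$$(\mc M_Y|_{\mc X})^{\dim X + 1} \cdot (q^* \mc H)^{\dim \mathfrak B - 1} = (M_0|_W)^{\dim X + 1} \cdot \alpha, \qquad \alpha := (p_1|_{\mc X})_*(q^* \mc H)^{\dim \mathfrak B - 1},$$
an effective cycle on $W$ of dimension at most $\dim X + 1$.

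Argue by contradiction: assume $\dim W > \dim X$; we show $\alpha \neq 0$, whence ampleness of $M_0|_W$ gives $(M_0|_W)^{\dim X + 1} \cdot \alpha > 0$ and so $h_{\ovl{M_Y}}(X) > 0$, a contradiction. Introduce the constant locus $W'' := \{w \in W : \{w\} \times \mathfrak B' \subset \mc X\}$, a closed subvariety of $W$. Irreducibility of $\mc X$ rules out $\dim W'' = \dim X$ (else $W'' \times \mathfrak B' = \mc X$, forcing $\dim W = \dim W'' = \dim X$), so $\dim W'' < \dim X$. Choose a generic complete-intersection curve $C \subset \mathfrak B'$ in the class $\mc H^{\dim \mathfrak B - 1}$; then $q^{-1}(C)$ has pure dimension $\dim X + 1$, whereas $W'' \times C \subset q^{-1}(C)$ has dimension $\dim W'' + 1 \leq \dim X$ and is therefore a proper closed subset. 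For any point $(w, b)$ in the dense open $q^{-1}(C) \setminus (W'' \times C)$, we have $w \notin W''$, so the fibre $F_w := p_1^{-1}(w) \cap \mc X$ is a proper subscheme of $\{w\} \times \mathfrak B'$, of codimension at least one; Kleiman-style transversality for the generic $C$ then yields $\dim(F_w \cap C) \leq 0$. Hence $p_1|_{q^{-1}(C) \setminus (W'' \times C)}$ has $0$-dimensional generic fibres, its image (contained in $W \setminus W''$) has dimension $\dim X + 1$, and the pushforward of this open part contributes a nonzero effective $(\dim X + 1)$-cycle to $\alpha$.

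The main obstacle is precisely the last dimension count: one must isolate the contribution of the constant locus $W''$ (over which the pushforward collapses) and verify that the genuinely varying part of the family $\{X_b\}_{b \in \mathfrak B'}$ produces a cycle of the expected top dimension $\dim X + 1$. Everything else is a routine application of the projection formula combined with the positivity of the ample line bundle $M_0$.
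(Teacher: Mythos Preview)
The paper does not supply its own proof here; it cites \cite[Proposition~3.2]{Yamaki2018trace} and notes that Yamaki's argument extends to general projective $Y$. Your overall strategy---take the closure $\mc X\subset Y\times_k\mathfrak B'$, set $W=p_1(\mc X)$, rewrite the height via the projection formula as $(M_0|_W)^{\dim X+1}\cdot\alpha$ with $\alpha=(p_1)_*\bigl((q^*\mc H)^{\dim\mathfrak B-1}\cap[\mc X]\bigr)$, and derive a contradiction from $\alpha\neq 0$ when $\dim W>\dim X$---is the right shape.

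The gap lies in the transversality step. You assert that for a generic curve $C$ one has $\dim(F_w\cap C)\leq 0$ for \emph{every} $(w,b)\in q^{-1}(C)\setminus(W''\times C)$. Bertini/Kleiman only guarantees the expected intersection dimension of a general $C$ with a \emph{fixed} subvariety; once $C$ is chosen there may remain $w\notin W''$ with $C\subset F'_w$, hence $\dim(F'_w\cap C)=1$. Your locus $W''$ (where $F'_w=\mathfrak B'$) is not the pertinent bad set; the relevant one is $B_C:=\{w:C\subset F'_w\}\supset W''$, and you have not controlled it. The repair is short: since $B_C\times C\subset\mc X_C:=q^{-1}(C)$ and $\mc X_C$ is irreducible of dimension $\dim X+1$ (Bertini), either $B_C\times C\subsetneq\mc X_C$ and the generic fibre of $p_1|_{\mc X_C}$ is finite as you want, or $\mc X_C=W_0\times C$ for some $W_0$ of dimension $\dim X$, whence $X_b=W_0$ for all $b\in C$; varying $C$ through a covering family then gives $\mc X=W_0\times\mathfrak B'$, contradicting $\dim W>\dim X$. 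Alternatively, set $I_a:=(p_1^*M_0)^a\cdot(q^*\mc H)^{\dim\mc X-a}\cdot[\mc X]$; direct projection-formula computations give $I_{\dim X}>0$ and $I_{\dim W}>0$, so Khovanskii--Teissier log-concavity forces $I_{\dim X+1}>0$ whenever $\dim W>\dim X$, bypassing the cycle-level argument entirely.
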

\begin{proof}
    See \cite[Proposition 3.2]{Yamaki2018trace}. The proof actually works for general projective varieties.
\end{proof}
\begin{rema}
    If $\dim \mathfrak B=1$, the above proposition can be proved in a few words. Here we give the argument which shed a light to the original proof. Let $\mc X$ be the closure of $X$ in $\mc Y$, and $W=\mr{pr}_{Y}(X).$ It suffices to show that $\dim W= \dim X.$ Since $\ovl{M_Y}^{\dim X+1}\cdot [X]=M_Y^{\dim X+1}\cdot [W]$ by the projection formula, we obtain $\dim W<\dim X+1$ due to the ampleness of $M_Y.$
\end{rema}
\begin{exem}
    Let $Y=(\mbb P^1_k)^t,$ and $M_0=\underbrace{O(1)\boxtimes\cdots \boxtimes O(1)}_{t\text{ times}}.$ We call the induced adelic line bundle the \textit{canonical line bundle} on $(\mbb P^1_k)^t.$
\end{exem}
\subsection{Yamaki's relative height}
Let $A$ be an abelian variety over $K$.
Consider a closed subvariety $X\subset Y_{\ovl K}\times A_{\ovl K}=Y\times_k A_{\ovl K}$ of dimension $d$. We may further assume that $p_1(X)=Y_{\ovl K}$ where $p_1:Y_{\ovl K}\times A\rightarrow Y_{\ovl K}.$

For any $y\in Y$, we take an algebraic closure $\overline {\kappa(y)}$ of the residue field $\kappa(y)$ of $y$.
We denote by $\rho_y$ the generic point of $\ovl{\kappa(y)}\otimes_k\mathfrak B$, whose residue field is isomorphic to $\ovl{\kappa(y)}\otimes_k K$. To be more precise, $\rho_y$ gives a morphism $$\rho_y:\mathrm{Spec}(\ovl{\kappa(y)}\otimes_k K)\rightarrow Y$$
which factors through $Y\times_k \mathfrak B\rightarrow Y$.

Following \cite{Yamaki2018trace}, we introduce this ad hoc notation $$Y^{\mr{pd}}:=\{y\in Y\mid X_{\rho_y}\text{ is of pure dimension }\dim X-\dim Y\}.$$
We define a height function for all $y\in Y^{\mr{pd}}$.

Let $N$ be a symmetric ample line bundle on $A$, and let $\ovl N$ be the associated canonical line bundle.
We assume that $\ovl N$ is defined by a sequence $\{(\scrA_i\rightarrow \mathfrak B,\mc N_i,\ell_i)\}_{i\in \N}$.
Obviously, the sequence $$\{(\scrA_i\otimes_k\ovl{\kappa(y)}\rightarrow \mathfrak B\ot_k \ovl{\kappa(y)},\mc N_i\ot_k \ovl{\kappa(y)},\ell_i)\}_{i\in\N}$$ gives a defining sequence of the canonical height on $A_{\ovl{\kappa(y)}\ot_k K}$ over the model $(\mathfrak B\otimes_k \ovl{\kappa(y)},\mathcal H\ot_k \ovl{\kappa(y)}).$
The obtained adelic line bundle is denoted as $\ovl {N_y}$, where $N_y:=N\ot_K (\ovl {\kappa(y)}\ot_k K).$
\begin{defi}
    Let $y\in Y^{\mr{pd}}$. The \emph{relative height} $h_{X/Y}^{\ovl N}(y)$ is defined as $h_{\ovl{N_y}}(X_{\rho_y}).$
\end{defi}

The main result of this subsection is as follows:
\begin{prop}\label{prop_generic_height}
    Let $\eta$ be the generic point of $Y$. If $h_{\ovl {M_{Y}}\boxtimes\ovl N}(X)=0$, then $h_{X/Y}^{\ovl N}(\eta)=0.$
\end{prop}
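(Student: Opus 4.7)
The plan is to expand the defining arithmetic top self-intersection by the binomial theorem, isolate the term that records $h^{\ovl N}_{X/Y}(\eta)$, and then force its vanishing using positivity of all remaining terms.

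Let $p_1,p_2$ denote the projections of $Y_K\times A$ onto its two factors, and set
\[
T_i:=(p_1^*\ovl{M_Y})^i\cdot(p_2^*\ovl N)^{d+1-i}\cdot[X],\qquad 0\leq i\leq d+1.
\]
Because $\ovl{M_Y}$ is induced by the constant model $(Y\times_k\mathfrak{B},\,p_Y^*M_0)$, the class $(p_1^*\ovl{M_Y})^i$ is already algebraically zero once $i>\dim Y$ (the underlying $M_0^{\dim Y+1}$ vanishes on $Y$). The binomial expansion thus truncates:
\[
(\ovl{M_Y}\boxtimes\ovl N)^{d+1}\cdot[X]=\sum_{i=0}^{\dim Y}\binom{d+1}{i}\,T_i.
\]

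For the leading term $T_{\dim Y}$, I cut $X$ successively with $\dim Y$ generic global sections of (a suitable power of) $M_0$, pulled back along $p_1$. Applying the intersection formula of the previous subsection iteratively, the Green-function corrections at each step take the form $\int_{X^{\an}_\nu}\log\|p_1^*s_j\|\,c_1((p_1^*\ovl{M_Y})_\nu)^{\bullet}\,c_1((p_2^*\ovl N)_\nu)^{\bullet}$; by the projection formula and the fact that a generic fiber of $X\to Y$ has dimension $e=d-\dim Y$, the pushforward along $p_1$ of $c_1((p_2^*\ovl N)_\nu)^{e+1}$ vanishes (it restricts fiberwise to a form of degree higher than the fiber dimension). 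Only the geometric restriction to a generic fiber survives, and the base-change compatibility built into the Tate-style definition of $\ovl{N_y}$ identifies
\[
T_{\dim Y}=(e+1)\,\deg_{M_0}(Y)\,\deg_N(X_\rho)\,h^{\ovl N}_{X/Y}(\eta).
\]

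Each remaining $T_i$ for $i<\dim Y$ is non-negative. After harmlessly rescaling $M_0$ to an ample multiple, $\ovl{M_Y}$ is nef (its defining model bundle $p_Y^*M_0$ is nef on $Y\times_k\mathfrak{B}$, and height vanishes trivially on points of the constant family), while $\ovl N$ is nef as the canonical adelic line bundle of an abelian variety. Mixed top intersections of nef adelic line bundles against an effective cycle are non-negative, by approximation through relatively nef model metrics whose intersections with $\mathcal{H}^{\dim\mathfrak{B}-1}$ pushforward to non-negative $0$-cycles on $\mathfrak{B}$. Since $h_{\ovl{M_Y}\boxtimes\ovl N}(X)=0$ forces $\sum_i\binom{d+1}{i}T_i=0$, the non-negativity of every $T_i$ forces $T_{\dim Y}=0$, and the strict positivity of $(e+1)$, $\deg_{M_0}(Y)$, $\deg_N(X_\rho)$ then yields $h^{\ovl N}_{X/Y}(\eta)=0$. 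The main obstacle is the clean identification of $T_{\dim Y}$: it requires carefully verifying, within the adelic formalism, the vanishing of the iterated Green-function corrections together with the base-change comparison between the canonical height of a fiber $X_y$ formed over $K$ and the relative height on $X_\rho$ formed over $\ovl{\kappa(\eta)}\otimes_k K$ with model $\mathfrak{B}\otimes_k\ovl{\kappa(\eta)}$.
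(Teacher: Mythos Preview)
Your overall architecture matches the paper's: expand $(\ovl{M_Y}\boxtimes\ovl N)^{d+1}\cdot[X]$, observe that nefness makes every mixed term non-negative, and isolate $T_{\dim Y}$. But the identification $T_{\dim Y}=(e+1)\deg_{M_0}(Y)\deg_N(X_\rho)\,h^{\ovl N}_{X/Y}(\eta)$ has a real gap, and the paper's proof is organised precisely to fill it.

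When you cut $X$ by $\dim Y$ sections of $p_1^*M_0$, what survives is a sum over \emph{closed} fibres $X_{y_j}$ with $y_j\in Y(k)$, giving $T_{\dim Y}=\sum_j\lambda_j\,\ovl N^{e+1}\cdot[X_{y_j}]$. There is no reason for $\ovl N^{e+1}\cdot[X_{y_j}]$ at a closed point to equal the relative height $h^{\ovl N}_{X/Y}(\eta)$ at the generic point; these heights genuinely vary with $y$. The paper bridges this by working at each model level $(\mc A_i,\mc N_i,\ell_i)$, where the approximated height $h_i(y)$ is a purely geometric intersection number and is therefore constant on a dense open $U_i\subset Y$ by generic flatness (the lemma immediately preceding the proposition). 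Bertini then lets one choose the $y_j$ inside $U_i$, so that $h_i(y_j)=h_i(\eta)$; only after this does one pass to the limit $i\to\infty$. Your appeal to ``base-change compatibility built into the Tate-style definition'' does not supply this step: Tate's construction gives compatibility under base change of the ground field, not constancy of fibre heights across a family.

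A secondary point: your justification for the vanishing of the Green-function corrections is not the right one. The claim that the pushforward of $c_1((p_2^*\ovl N)_\nu)^{e+1}$ along $p_1$ vanishes by a fibre-dimension count treats Chambert-Loir measures like smooth forms, which they are not at non-archimedean places. The corrections do vanish, but for a simpler reason: the metric on $\ovl{M_Y}$ is the constant model metric induced by $p_Y^*M_0$ on $Y\times_k\mathfrak B$, so any section $s\in H^0(Y,M_0)$ has $\lVert s\rVert_{\phi_\nu}\equiv 1$ off its zero locus, and $\log\lVert p_1^*s\rVert_{\phi_\nu}$ integrates to zero against any measure that does not charge proper closed subsets.
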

    Before the proof, we provide the geometric interpretation of $h^{\ovl N}_{X/Y}(y)$. For each $y\in Y$, we denote the geometric point $\mr{Spec}(\ovl {\kappa(y)})\rightarrow Y$ by $\ovl y$.
    Note that $\mc A_i\ot_k\ovl{\kappa(y)}\simeq \{\ovl y\}\times \mc A_i$.
    Let $\mc X_i$ be the closure of $X$ in $Y\times_k\mc A_i$. Then $\mc X_{i}|_{\{\ovl y\}\times \mc A_i}$ gives the closure of $X_{\rho_y}$ in $\mc A_i\ot_k\ovl {\kappa(y)}$.
    For each $y\in Y^{\mr{pd}}$,
    we define $$h_i(y):=\frac{\deg_{\mc O_{Y}\boxtimes \mc N_i|_{\{\ovl y\}\times \mc A_i}}(\mc X_i|_{\{\ovl y\}\times \mc A_i})}{(d-\dim Y+1)\ell_i^{d-\dim Y+1}\deg_{N_y}(X_{\rho_y})}$$
    By definition, we have 
    $$h_{X/Y}^{N}(y)=\lim_{i\rightarrow+\infty}h_i(y)$$
    for every $y\in Y^{\mr{pd}}.$
    
    Using the generic flatness and reproducing the proof of \cite[Proposition 4.9]{Yamaki2018trace}, we will obtain the following.
    \begin{lemm}
    For each $i$, there exists a Zariski dense open subset $U_i\subset Y$ such that
    $h_i(y)$ is constant for $y\in U_i$.
    \end{lemm}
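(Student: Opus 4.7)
The plan is to invoke generic flatness. Applying it to the morphism $\mc X_i \to Y$ obtained by composing the closed immersion $\mc X_i \hookrightarrow Y \times_k \mc A_i$ with the first projection, we obtain a Zariski dense open subset $U_i^{(1)} \subset Y$ over which $\mc X_i \to Y$ is flat. After shrinking if necessary, we may also arrange that every geometric fiber $(\mc X_i)_{\ovl y}$ with $y \in U_i^{(1)}$ is pure of the expected dimension $d - \dim Y + \dim \mathfrak B$; in particular $U_i^{(1)} \subset Y^{\mr{pd}}$.

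In a flat projective family the Hilbert polynomial of each geometric fiber with respect to any line bundle is locally constant on the base, and the same holds for multi-variable Hilbert polynomials in several line bundles. Consequently, for any line bundles $\mc L_1, \dots, \mc L_s$ on $\mc A_i$ with $s = \dim (\mc X_i)_{\ovl y}$, the intersection number $\mc L_1 \cdots \mc L_s \cdot [(\mc X_i)_{\ovl y}]$ is constant in $y \in U_i^{(1)}$. Applied to the appropriate combination of $\mc N_i$ and pullbacks of $\mc H$ from $\mathfrak B$ (implicit in the definition of $\deg$), this shows that the numerator of $h_i(y)$ is constant on $U_i^{(1)}$.

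For the denominator, apply generic flatness to $X \to Y_{\ovl K}$: this yields a Zariski dense open $V \subset Y_{\ovl K}$ over which the morphism is flat. Descending $V$ to an open subset of some variety $Y_{k'}$ over a finitely generated intermediate extension $k'/k$, the flat (hence open) morphism $Y_{k'} \to Y$ carries $V$ onto a Zariski dense open $U_i^{(2)} \subset Y$. For $y \in U_i^{(2)}$ the scheme $X_{\rho_y}$ is obtained from a flat fiber of $X \to Y_{\ovl K}$ by further scalar extension to $\ovl{\kappa(y)} \otimes_k K$; since scalar extensions preserve Hilbert polynomials, $\deg_{N_y}(X_{\rho_y})$ is constant. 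Setting $U_i := U_i^{(1)} \cap U_i^{(2)}$ then gives a Zariski dense open subset of $Y$ on which $h_i(y)$ is constant.

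The main technical obstacle is really only bookkeeping: identifying $X_{\rho_y}$ with the generic fiber (over $\mathfrak B \otimes_k \ovl{\kappa(y)}$) of $(\mc X_i)_{\ovl y}$, tracking the compatibility of the base changes $k \to \ovl{\kappa(y)}$ and $K \to \ovl{\kappa(y)} \otimes_k K$, and verifying that the relevant Hilbert polynomials are preserved at each step. No geometric input beyond generic flatness and the locally constant behavior of Hilbert polynomials in flat families is required.
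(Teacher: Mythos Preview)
Your proposal is correct and follows exactly the approach the paper indicates: the paper's own proof consists solely of the sentence ``Using the generic flatness and reproducing the proof of \cite[Proposition 4.9]{Yamaki2018trace}'', and your argument is a faithful expansion of that.

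One small imprecision is worth flagging. You write that the morphism $Y_{k'} \to Y$ is ``flat (hence open)'' for a finitely generated intermediate extension $k'/k$. Flatness alone does not imply openness; one also needs the morphism to be locally of finite presentation, and $\mr{Spec}(k') \to \mr{Spec}(k)$ is not of finite type when $k'/k$ is transcendental. The fix is routine: spread $k'$ out to an integral $k$-variety $S$ with $k(S) = k'$, extend your descended open $V'$ to an open $\widetilde V \subset Y \times_k S$, and use that $Y \times_k S \to Y$ is flat of finite type, hence open. Alternatively, and perhaps more in the spirit of the paper's setup, you can treat numerator and denominator uniformly by applying generic flatness to $\mc X_i \to Y \times_k \mathfrak B$ (a morphism of finite-type $k$-schemes): the point $\rho_y$ is the generic point of the fiber of $Y \times_k \mathfrak B \to Y$ over $y$, so it lies in any open of $Y \times_k \mathfrak B$ meeting that fiber, and the image of such an open in $Y$ is genuinely open.
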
 
\begin{proof}[Proof of Proposition \ref{prop_generic_height}]
    For each $i$, we denote by $\ovl N_i$ the adelic line bundle induced by $(\scrA_i\rightarrow \mathfrak B,\mc N_i,\ell_i)$. We follow the same argument as in \cite[Proposition 5.1]{Yamaki2018trace}.
        By definition,
        $${\frac{1}{\ell_i^{d+1-\dim Y}}}p_1^* M_0^{\dim Y}\cdot p_2^*\mc N_i^{d+1-\dim Y}\cdot[\mc X_i]=p_1^*\ovl{M_Y}^{\dim Y}\cdot p_2^*\ovl N_i^{d+1-\dim Y}\cdot[X].$$
        By Bertini's theorem, there exists an open subset $V\subset H^0(Y,M_0)^{\dim Y}$ such that for any $(s_1,\cdots, s_{\dim Y})\in V$, $\mathrm{div}(s_1)\cap\cdots\cap\mathrm{div}(s_{\dim Y})\subset U_i$, and is of pure dimension $0$. 
        Assume that $\mr{div}(s_1)\cap\cdots\cap \mr{div}(s_{\dim Y})=\sum \lambda_j [y_j]$
        where $\lambda_j\in\N$ and $y_j\in Y(k).$
        Then \begin{align*}
            &p_1^* M_0^{\dim Y}\cdot p_2^*\mc N_i^{d+1-\dim Y}\cdot[\mc X_i]=\sum \lambda_j p_2^*\mc N_i^{d+1-\dim Y}\cdot[\mc X_i|_{\{y_j\}\times \mc A_i}]
            \\&\kern 5em= \deg_{M_0}(Y)\deg_{N_\eta}(X_{\rho_\eta})\ell_i^{d+1-\dim Y}h_i(\eta).
        \end{align*}

    Therefore \begin{align*}
        &\kern 1.3em\deg_{M_0}(Y)\deg_{N_\eta}(X_{\rho_\eta})h_{X/Y}^{\ovl N}(\eta)
        =\deg_{M_0}(Y)\deg_{N_\eta}(X_{\rho_\eta})\lim_{i\rightarrow +\infty}h_i(\eta)
        \\&\kern 7em= \lim_{i\rightarrow +\infty}p_1^*\ovl {M_Y}^{\dim Y}p^*_2\ovl N_i^{d+1-\dim Y}\cdot[X]
        \\&\kern 7em\leq \lim_{i\rightarrow +\infty}\frac{1}{\binom{d+1}{\dim Y}}(\ovl {M_Y}\boxtimes \ovl N_i)^{d+1}\cdot[X]=0,
    \end{align*}
    where the last inequality is due to the nefness of $\ovl{M_Y}$ and $\ovl N_i$.
    \end{proof}
\subsection{Quasi-split case}
The following is a generalization of \cite[Theorem 5.3]{Yamaki2018trace}.
\begin{prop}\label{prop_trace_gbc}
    Assume that the $\ovl K/k$-trace of $A$ is trivial. 
    Let $X\subset Y\times_k A_{\ovl K}$ be a closed subvariety. If $h_{\ovl {M_Y}\boxtimes \ovl N}(X)=0$, then there exists \begin{enumerate}
        \item[\textnormal{(i)}] a closed $k$-subvariety $W\subset Y$,
        \item[\textnormal{(ii)}] an abelian subvariety $A'\subset A_{\ovl K}$,
        \item[\textnormal{(iii)}] torsion point $x\in A_{\ovl K}$,
    \end{enumerate} such that $$X=W\times_k(A'+x).$$
\end{prop}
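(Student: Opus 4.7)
The plan is to mimic Yamaki's proof of \cite[Theorem 5.3]{Yamaki2018trace}, using the relative-height machinery developed above to reduce to the geometric Bogomolov conjecture for abelian varieties, and then spreading out the resulting torsion-coset structure over $Y$.

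First, I would apply Proposition \ref{prop_generic_height} to the hypothesis $h_{\ovl{M_Y}\boxtimes\ovl N}(X)=0$, which yields $h^{\ovl N}_{X/Y}(\eta)=0$ for $\eta$ the generic point of $Y$. By definition this says that the fiber $X_{\rho_\eta}\subset A_{\rho_\eta}$ has canonical height zero with respect to $\ovl{N_\eta}$, so by the fundamental inequality (Theorem \ref{theo_fund_ineq}) it contains a Zariski dense set of $\ovl{N_\eta}$-small points. The assumption that $A$ has trivial $\ovl K/k$-trace passes, by standard behavior of Chow traces under constant-field extension, to the assertion that $A_{\rho_\eta}$ has trivial trace relative to $\overline{\kappa(\eta)}$. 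Applying the geometric Bogomolov conjecture for abelian varieties to $X_{\rho_\eta}$, each of its geometric components is a torsion translate of an abelian subvariety of $A_{\rho_\eta}$.

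Next, using the irreducibility of $X$ together with a descent argument, I would arrange that there is a single abelian subvariety $A'\subset A_{\ovl K}$ and a torsion point $x\in A_{\ovl K}(\ovl K)$ such that $X_{\rho_\eta}=A'_{\rho_\eta}+x$. The key observation is that translation by $A'$, acting trivially on the $Y$-factor, preserves the generic fiber of $X$ and hence the whole of $X$. Passing to the quotient, the image $\widetilde X\subset Y_{\ovl K}\times(A_{\ovl K}/A')$ has generic fiber over $Y_{\ovl K}$ equal to the single torsion point $\bar x\in (A/A')(\ovl K)$; hence $\widetilde X=W'\times\{\bar x\}$ for some closed subvariety $W'\subset Y_{\ovl K}$, and we obtain $X=W'\times(A'+x)$ inside $Y_{\ovl K}\times A_{\ovl K}$. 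The product decomposition combined with $h_{\ovl{M_Y}\boxtimes\ovl N}(X)=0$ and $h_{\ovl N}(A'+x)=0$ forces $h_{\ovl{M_Y}}(W')=0$, and Proposition \ref{prop_trivial_gbc} then identifies $W'$ with $W_{\ovl K}$ for a closed $k$-subvariety $W\subset Y$.

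The main obstacle lies in the descent step: a priori the torsion-coset data extracted from the generic fiber are only defined over the non-standard field $\overline{\kappa(\eta)\ot_k K}$, and one must (i) descend the abelian subvariety and the torsion class to $A_{\ovl K}$, and (ii) consolidate the possibly several Galois-conjugate components of $X_{\rho_\eta}$ into a single coset by exploiting the irreducibility of $X$ and the triviality of the $\ovl K/k$-trace. Once this descent is handled, the quotient by $A'$ and the final appeal to Proposition \ref{prop_trivial_gbc} proceed along the lines of Yamaki's treatment in \cite[\S 5]{Yamaki2018trace}.
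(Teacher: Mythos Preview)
Your overall strategy matches the paper's: use Proposition~\ref{prop_generic_height} to obtain height zero on the generic fiber, invoke {\bf(GBC)} for abelian varieties over the enlarged function field, descend the torsion-coset data to $\ovl K$, and conclude. The descent step you flag as the main obstacle is dispatched in the paper by a direct citation of \cite[Proposition~3.7]{Yamaki2018trace}, which pulls the abelian subvariety and torsion point down from $\ovl{\kappa(\eta)}\ot_k K$ to $\ovl K$.

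There is, however, a real ordering issue. The relative-height machinery and Proposition~\ref{prop_generic_height} are set up under the standing hypothesis $p_1(X)=Y_{\ovl K}$; without it the generic point $\eta$ of $Y$ need not lie in $Y^{\mr{pd}}$ and $X_{\rho_\eta}$ may well be empty. You cannot sidestep this by replacing $Y$ with $p_1(X)$, since $p_1(X)$ is a priori only a $\ovl K$-subvariety of $Y_{\ovl K}$, and the whole construction of $\rho_y$ and $h^{\ovl N}_{X/Y}$ requires a $k$-variety as base. Thus your final step --- showing $W'$ is constant via Proposition~\ref{prop_trivial_gbc} --- must come \emph{first}, not last.

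The paper does exactly this: with $e=\dim p_1(X)$, the projection formula and nefness of $\ovl{M_Y}$ and $\ovl N$ bound $\ovl{M_Y}^{e+1}\cdot[p_1(X)]$ by a multiple of $(\ovl{M_Y}\boxtimes\ovl N)^{\dim X+1}\cdot[X]=0$, so Proposition~\ref{prop_trivial_gbc} gives $p_1(X)=W_{\ovl K}$ for a $k$-subvariety $W\subset Y$. One then works over $W$ and applies {\bf(GBC)} at its generic point. With $W$ already identified, the conclusion is also simpler than your quotient-by-$A'$ argument: the description of the generic fiber immediately gives $W\times_k(A'+x)\subset X$, and equality follows by comparing dimensions.
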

\begin{proof}
    Let $e=\dim p_1(X)$, where $p_1:Y\times_k A_{\ovl K}\rightarrow Y_{\ovl K}.$
    By the projection formula, $$p_1^*\ovl {M_Y}^{e+1}\cdot p_2^*\ovl N^{\dim X-e}\cdot [X]=\deg_{N_{\eta}}(X_{\eta})\ovl {M_Y}^{e+1}\cdot [p_1(X)],$$
    where $\eta$ is the generic fiber of $p_1(X),$ and $N_\eta:=N\ot_K\kappa(\eta)$. By the nefness of $\ovl {M_Y}$ and $\ovl N$, the left-hand side is bounded from above by $(\ovl {M_Y}\boxtimes \ovl N)^{\dim X+1}\cdot[X],$ hence is $0.$ Therefore $p_1(X)=W_{\ovl K}$ for some closed $k$-subvariety $W\subset Y$ by Proposition \ref{prop_trivial_gbc}.

    By abuse of notation, we still denote by $\eta$ the generic point of $W$. Let $K':=\ovl{\kappa(y)}\ot_k K$. Since $A_{K'}$ is of trivial $\ovl K'/\ovl{\kappa(y)}$-trace by \cite[Lemma A.1]{Yamaki2018trace}, applying {\bf(GBC)} for abelian varieties \cite[Theorem 1.3]{YX2022GBC}, $X_{\rho_\eta}$ is a torsion subvariety in $A_{\ovl K'}$ of dimension $\dim X-e$. By \cite[Proposition 3.7]{Yamaki2018trace}, there exist an abelian subvariety $A'\subset A_{\ovl K}$ and a torsion point $x\in A_{\ovl K}$, such that $$X_{\rho_\eta}=(A'+x)\ot_{\ovl K}{\ovl K'}.$$
    Therefore $W\times_k(A'+x)\subset X$. By dimension reason, we obtain the equality.
\end{proof}

We say $G$ is \textit{quasi-split} if $G$ is isogenous to $G_0\times_k A$ where $A$ is of trivial $\ovl K/k$-trace.
Applying Proposition \ref{prop_trace_gbc}, we obtain the following.
\begin{coro}\label{coro_quasi-split}
    Assume that $G$ is quasi-split. Let $X$ be a closed subvariety of $G_{\ovl K}$ containing Zariski dense sets of small points. Then $X$ is special.
\end{coro}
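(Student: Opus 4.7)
The plan is to combine the isogeny-invariance of \textbf{(GBC)} already established (Proposition \ref{prop_special_isogeny} and Lemma \ref{lemm_ess_isogeny}) with Proposition \ref{prop_trace_gbc}. Since both the hypothesis ``$X$ contains Zariski dense sets of small points'' and the conclusion ``$X$ is special'' are transported by isogenies of $G$, I may assume $G$ is already split: $G = G_0 \times_k A_1$, where $G_0$ is a semiabelian variety over $k$ with abelian part $A_0$, and $A_1$ is an abelian variety over $K$ of trivial $\ovl K/k$-trace.

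In this split setting I would choose the ample data compatibly with the product: fix symmetric ample line bundles $N_{A_0}$ on $A_0$ and $N_{A_1}$ on $A_1$, so that $L_{G_0} := M_{G_0} + \pi_0^* N_{A_0}$ is ample on the $k$-compactification $\ovl{G_0}$ (with $\pi_0 : \ovl{G_0} \to A_0$ extending the projection) and $N := N_{A_0,K} \boxtimes N_{A_1}$ is symmetric ample on $A = A_{0,K} \times A_1$. Then $\ovl G = \ovl{G_0} \times_k A_1$ and $L = L_{G_0,K} \boxtimes N_{A_1}$. Because every Tate limiting relation used to build the canonical adelic structures on $M_{G_0,K}$ and $N_{A_0,K}$ is already realized over $k$, these structures coincide with the constant ones, so $\ovl L = \ovl{L_{G_0,K}} \boxtimes \ovl{N_{A_1}}$ with $\ovl{L_{G_0,K}}$ constant in the sense of \S\ref{sec_YMK_RVS}.

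Next I would pass from the essential-minimum hypothesis to the intersection-theoretic statement $h_{\ovl L}(\ovl X) = 0$, where $\ovl X$ denotes the Zariski closure of $X$ in $\ovl G_{\ovl K}$. Both summands $\ovl M$ and $\ovl\pi^* \ovl N$ are nef (the first by Theorem \ref{theo_eff_boud_div}, the second as the pullback of a nef canonical adelic line bundle on the abelian variety $A$), so the fundamental inequality (Theorem \ref{theo_fund_ineq}) squeezes $0 \leq h_{\ovl L}(\ovl X) \leq \mr{ess}_{\ovl L}(\ovl X) = 0$. Proposition \ref{prop_trace_gbc} then applies with $Y = \ovl{G_0}$, $M_Y = L_{G_0,K}$, and $A = A_1$, and yields $\ovl X = W_{\ovl K} \times (A' + x)$ for a closed $k$-subvariety $W \subset \ovl{G_0}$, an abelian subvariety $A' \subset (A_1)_{\ovl K}$, and a torsion point $x \in (A_1)_{\ovl K}$.

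To finish, intersecting with the open $G_{\ovl K} \subset \ovl G_{\ovl K}$ gives $X = W_{0, \ovl K} \times (A' + x)$ with $W_0 := W \cap G_0$ a closed $k$-subvariety of $G_0$, and a direct stabilizer computation yields $\mr{Stab}_0(X) = \mr{Stab}_0(W_0)_{\ovl K} \times A'$. Setting $G_0' := G_0/\mr{Stab}_0(W_0)$, $X_0 := W_0/\mr{Stab}_0(W_0) \subset G_0'$, $\tilde x := (0, x \bmod A')$, and letting $h$ be the embedding of $G_0' \otimes_k \ovl K$ as the first factor of $\widetilde G = (G_0')_{\ovl K} \times (A_1/A')_{\ovl K}$, one reads off $X/\mr{Stab}_0(X) = h(X_0 \otimes_k \ovl K) + \tilde x$, which is exactly the definition of special; Proposition \ref{prop_special_isogeny} then transports specialness back to the original $G$. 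The main subtlety I foresee is justifying the product decomposition $\ovl L = \ovl{L_{G_0,K}} \boxtimes \ovl{N_{A_1}}$ with $\ovl{L_{G_0,K}}$ constant, which rests on the fact that the Tate limits defining the canonical adelic structures on the boundary of $\ovl{G_0}$ and on $N_{A_0,K}$ collapse to the constant ones whenever the underlying semiabelian variety is defined over $k$; once this compatibility is in hand, the remainder is the fundamental inequality, Proposition \ref{prop_trace_gbc}, and routine bookkeeping on stabilizers.
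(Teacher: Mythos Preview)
Your proposal is correct and follows the same approach the paper has in mind: reduce to the split product $G=G_0\times_k A_1$ via isogeny-invariance, identify $\ovl L$ with the box product of a constant adelic line bundle on $\ovl{G_0}$ and the canonical $\ovl{N_{A_1}}$, use nefness together with the first inequality of Theorem~\ref{theo_fund_ineq} to get $h_{\ovl L}(\ovl X)=0$, and then invoke Proposition~\ref{prop_trace_gbc}. The paper records this only as ``Applying Proposition~\ref{prop_trace_gbc}'', leaving the steps you spell out implicit; the subtlety you flag about the Tate limits collapsing to constant structures when $G_0$ is defined over $k$ is routine once noted, since the model sequences of Theorem~\ref{theo_eff_boud_div} can be taken of the form $\ovl{G_0}\times_k(\text{model of }A_1)$ with the obvious box-product line bundles.
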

\section{Proof of the geometric Bogomolov conjecture}\label{sec_REL_FZ}
\subsection{An auxiliary lemma}

The following Lemma is the key ingredient of this section, connecting the vanishing of the essential minimum with certain intersection numbers.
\begin{lemm}\label{lemm_aux}
Let $\mbb{G}_m^{t'}$ be a torus, and $X\subset \mbb{G}_{m}^{t'}\times G_{\ovl K}$ be a closed subvariety. Let $\ovl{M}'$ be the canonical line bundle on $(\mbb{P}^1_K)^{t'}$, and $\ovl L':=\ovl M' \boxtimes \ovl L$. Then $\mr{ess}_{\ovl L'}(X)=0$ if and only if
$$p_2^*\ovl{M}^{\dim X+1-\dim p(X)}\cdot p^*(\ovl M'\boxtimes \ovl{N})^{\dim p(X)}\cdot [\ovl X]=0,$$
and $$(\ovl M'\boxtimes \ovl N)^{e+1}\cdot [\ovl{p(X)}]=0$$
where $p_2:(\mbb P^1)^{t'}\times \ovl G\rightarrow \ovl G$ and $p:(\mbb P^1)^{t'}\times \ovl G\rightarrow (\mbb P^1)^{t'}\times A.$
\end{lemm}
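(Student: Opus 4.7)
The plan is to reduce both directions of the equivalence to an analysis of the dynamical system $f_n=[n^2]\times[n]$ on $\mbb G_m^{t'}\times G$, combined with the modified fundamental inequality of Theorem \ref{theo_fund_ineq}. Decompose $\ovl L'=A_0+B_0$, where $A_0:=p_2^*\ovl M$ and $B_0:=p^*(\ovl M'\boxtimes\ovl N)$. Using the scaling relations $[n]^*\ovl M=n\ovl M$ (implicit in Theorem \ref{theo_eff_boud_div}), $[n]^*\ovl N=n^2\ovl N$, and $[n^2]^*\ovl M'=n^2\ovl M'$, one obtains $f_n^*A_0=nA_0$ and $f_n^*B_0=n^2B_0$, hence $f_n^*\ovl L'=nA_0+n^2B_0$. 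The whole proof is driven by the binomial expansion $(f_n^*\ovl L')^{\dim X+1}\cdot[\ovl X]=\sum_{i=0}^{\dim X+1}\binom{\dim X+1}{i}n^{2(\dim X+1)-i}A_0^iB_0^{\dim X+1-i}\cdot[\ovl X]$, viewed as a polynomial in $n$.

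The coefficients are read off geometrically via the projection formula along $p\colon\ovl X\to\ovl{p(X)}$. Since $B_0$ is pulled back from $\ovl{p(X)}$ (which has dimension $e:=\dim p(X)$), the $i$-th coefficient equals $\binom{\dim X+1}{i}(\ovl M'\boxtimes\ovl N)^{\dim X+1-i}\cdot p_*(A_0^i\cdot[\ovl X])$; this vanishes whenever $i<\dim X-e$ for dimensional reasons. At $i=\dim X-e$ the push-forward equals $D\cdot[\ovl{p(X)}]$ with $D>0$ the $\ovl M$-degree of a generic fiber of $p|_{\ovl X}$, so the coefficient is a positive multiple of the second intersection identity $(\ovl M'\boxtimes\ovl N)^{e+1}\cdot[\ovl{p(X)}]$. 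At $i=\dim X+1-e$ the coefficient is a positive multiple of the first intersection identity $p_2^*\ovl M^{\dim X+1-e}\cdot p^*(\ovl M'\boxtimes\ovl N)^e\cdot[\ovl X]$. Thus the two conditions of the lemma are exactly the two top-order (in $n$) coefficients of the polynomial.

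For the forward direction, observe that $n^2\ovl L'-f_n^*\ovl L'=(n^2-n)A_0$ is semipositive, giving the pointwise bound $h_{f_n^*\ovl L'}\leq n^2 h_{\ovl L'}$. Hence $\mr{ess}_{f_n^*\ovl L'}(X)\leq n^2\mr{ess}_{\ovl L'}(X)=0$; combined with nefness $h_{f_n^*\ovl L'}(\ovl X)\geq 0$ and the inequality $\mr{ess}\geq h$ from Theorem \ref{theo_fund_ineq}, this forces $h_{f_n^*\ovl L'}(\ovl X)=0$ for every $n$. The polynomial in $n$ is therefore identically zero and each coefficient vanishes, which proves both intersection identities.

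For the converse, the two identities annihilate the two top-order coefficients in $n$, so $(f_n^*\ovl L')^{\dim X+1}\cdot[\ovl X]=O(n^{\dim X+e})$; a parallel computation gives $(\dim X+1)\deg_{f_n^*L'}(\ovl X)=\Theta(n^{\dim X+e})$ and hence $h_{f_n^*\ovl L'}(\ovl X)=O(1)$. The fundamental inequality rewritten as $\mr{ess}_{f_n^*\ovl L'}(X)\leq (\dim X+1)h_{f_n^*\ovl L'}(\ovl X)-\dim X\cdot\lambda_{\min}(f_n^*\ovl L')$, together with the symmetric bound $h_{f_n^*\ovl L'}\geq n h_{\ovl L'}$ (which yields $\mr{ess}_{\ovl L'}(X)\leq \mr{ess}_{f_n^*\ovl L'}(X)/n$), gives $\mr{ess}_{\ovl L'}(X)\leq O(1/n)-(\dim X/n)\lambda_{\min}(f_n^*\ovl L')$ after dividing by $n$. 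The main technical obstacle is to establish $\lambda_{\min}(\ovl L')\geq 0$; I plan to prove this by exhibiting nonzero effective adelic sections of large tensor powers of $\ovl L'$ with nonnegative Arakelov degree avoiding any prescribed proper closed subvariety, built from the effective semipositive boundary divisors $\ovl D^{(i)}_{[0]},\ovl D^{(i)}_{[\infty]}$ of Theorem \ref{theo_eff_boud_div} together with standard canonical sections of $\ovl M'$ on $(\mbb P^1_K)^{t'}$ and of $\ovl N$ on $A$. The resulting bound $\lambda_{\min}(f_n^*\ovl L')\geq n\lambda_{\min}(\ovl L')\geq 0$ then yields $\mr{ess}_{\ovl L'}(X)\leq O(1/n)\to 0$, and nefness forces equality.
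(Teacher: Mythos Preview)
Your overall framework---the dynamical system $f_n=[n^2]\times[n]$, the binomial expansion of $(f_n^*\ovl L')^{\dim X+1}\cdot[\ovl X]$ as a polynomial in $n$, and the modified fundamental inequality---is exactly the paper's, and your identification of the two leading coefficients with the two intersection conditions is correct. The differences are in how each direction is closed off.

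\textbf{Forward direction: the nefness claim is a genuine gap.} You assert $h_{f_n^*\ovl L'}(\ovl X)\geq 0$ ``by nefness'' to conclude the polynomial vanishes identically. But $\ovl M$ is \emph{not} known to be nef: Appendix~A opens by flagging precisely this subtlety (``the canonical line bundle on the semiabelian variety is not a priori a limit of nef line bundles on models''), and this is the reason the paper introduces $\lambda_{\min}$ at all. The paper works around this in two steps: first, the second identity $(\ovl M'\boxtimes\ovl N)^{e+1}\cdot[\ovl{p(X)}]=0$ is obtained directly, since $\mr{ess}_{\ovl L'}(X)=0$ forces $\mr{ess}_{\ovl M'\boxtimes\ovl N}(p(X))=0$ and $\ovl M'\boxtimes\ovl N$ \emph{is} nef. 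This kills the top coefficient, so $h_{f_n^*\ovl L'}(\ovl X)$ grows at most linearly in $n$. Then the fundamental inequality, with $\lambda_{\min}(f_n^*\ovl L'|_{\ovl X})\geq \lambda_{\min}(\ovl L')>-\infty$ from Proposition~\ref{prop_ample_fin_pullback}, is divided by $n$; letting $n\to\infty$ forces the subleading coefficient (the first identity) to vanish. Your argument can be repaired the same way, but as written it rests on an unproved nefness.

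\textbf{Converse direction: you are making it harder than necessary.} You do not need $\lambda_{\min}(\ovl L')\geq 0$; the finiteness $\lambda_{\min}(\ovl L')>-\infty$ from Proposition~\ref{prop_ample_fin_pullback}(1) (ampleness of $L'$) already suffices, since after dividing by $n$ the term $-\frac{\dim X}{n}\lambda_{\min}(f_n^*\ovl L')$ tends to $0$ once you know $\lambda_{\min}(f_n^*\ovl L')$ is bounded below independently of $n$. The correct bound for that is $\lambda_{\min}(f_n^*\ovl L')\geq \lambda_{\min}(\ovl L')$ from Proposition~\ref{prop_ample_fin_pullback}(2), not the $\geq n\,\lambda_{\min}(\ovl L')$ you wrote. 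Your proposed construction of effective sections of $\ovl N$ with nonnegative Arakelov degree is neither needed nor obviously available.
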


\begin{proof}
Consider the morphism $f_n:(\mbb P^1)^{t'}\times \ovl G\rightarrow (\mbb P^1)^{t'}\times\ovl G$ extending $[n^2]\times[n]:\mbb{G}_{m}^{t'}\times G\rightarrow \mbb{G}_{m}^{t'}\times G.$
Then
$$f_n^*\ovl{L}'=n^2p^*(\ovl{M}'\boxtimes\ovl{N})+np_2^*\ovl{M}.$$
The fundamental inequality gives that
\begin{equation}\label{ineq_fund_[n2,n]_dyn}
    \mr{ess}_{f^*_n\ovl L'}(X)\ge h_{f_n^*\ovl L'}(X)\ge\frac{1}{d+1}(\mr{ess}_{f^*_n\ovl L'}(X)+d\cdot\lambda_{\min}(f_n^*\ovl L'|_X))
\end{equation}
where $d:=\dim X$.
\begin{align*}
    &(f_n^*\ovl{L}')^{d+1}\cdot [\ovl X]\\&\kern 3em=\sum_{0\leq j\leq e+1} \binom{d+1}{j}n^{d+1+j} p^*(\ovl M'\boxtimes \ovl N)^j\cdot p_2^*\ovl M^{d+1-j}\cdot [\ovl X]
\end{align*}
where $e:=\dim p(X).$
Assume that $\mr{ess}_{\ovl L'}(X)=0$. Then $\mr{ess}_{\ovl M'\boxtimes \ovl N}(p (X))=0$ in $\mbb G_m^{t'}\times A,$ which implies that $$(\ovl M'\boxtimes \ovl N)^{e+1}\cdot [\ovl{p(X)}]=0$$ by the nefness of $\ovl M'$ and $\ovl N$.
By the projection formula, we have $$(f_n^*\ovl L')^{d+1}\cdot[\ovl X]=n^{d+1+e}\binom{d+1}{e}p^*(\ovl M'\boxtimes \ovl N)^e\cdot p_2^*\ovl M^{d+1-e}\cdot[\ovl X]+O(n^{d+e}).$$
On the other hand, we can expand to get
$$\deg_{f^*_n {L'}}(\ovl X)=n^{d+e}\binom{d}{e} p^*(M'\boxtimes N)^e\cdot p_2^* M^{d-e}\cdot [\ovl X]+O(n^{d+e-1}).$$
Hence \begin{equation}\label{eq_limit_[n2,n]_dyn}
    \lim_{n\rightarrow \infty}\frac{1}{n} h_{f_n^*\ovl L'}(X)=\frac{p^*(\ovl M'\boxtimes \ovl N)^e\cdot p_2^*\ovl M^{d+1-e}\cdot[\ovl X]}{(d+1-e) p^*(M'\boxtimes N)^e\cdot p_2^* M\cdot[\ovl X]}.
\end{equation}
Note that $\mr{ess}_{f^*_n\ovl L'}(X)\leq n^2\mr{ess}_{\ovl L'}(X)=0$ and \begin{equation}\label{ineq_abs_[n2,n]_dyn}
    \lambda_{\min}(f^*_n\ovl L'|_X)\geq\lambda_{\min}(f^*_n\ovl L')\geq \lambda_{\min}(\ovl L')>-\infty
\end{equation}
by Proposition \ref{prop_ample_fin_pullback}.
Dividing \eqref{ineq_fund_[n2,n]_dyn} by $n$, and letting $n\rightarrow+\infty$, we obtain that $$p^*(\ovl M'\boxtimes \ovl N)^e\cdot p_2^*\ovl M^{d+1-e}\cdot[\ovl X]=0.$$

Conversely, assume that $p^*(\ovl M'\boxtimes \ovl N)^e\cdot p_2^*\ovl M^{d+1-e}\cdot[\ovl X]=0$ and $(\ovl M'\boxtimes \ovl N)^{e+1}\cdot [\ovl{p(X)}]=0.$ By \eqref{eq_limit_[n2,n]_dyn}, $\displaystyle\lim_{n\rightarrow \infty}\frac{1}{n} h_{f_n^*\ovl L'}(X)=0$. Therefore 
$$0=\lim_{n\rightarrow +\infty}-\frac{d\cdot \lambda_{\min}(\ovl L')}{n(d+1)}\geq \lim_{n\rightarrow+\infty}\frac{\mr{ess}_{f^*_n\ovl L'}(X)}{n(d+1)}\geq \frac{\mr{ess}_{\ovl L'}(X)}{d+1}$$
where the first inequality is due to \eqref{ineq_fund_[n2,n]_dyn} and \eqref{ineq_abs_[n2,n]_dyn}, and the second inequality is due to the fact that $f_n^*\ovl L'-n\ovl L'$ is effective. This concludes the proof.
\end{proof}
\begin{rema}
    The number field analogue of Lemma \ref{lemm_aux} generalizes \cite[Lemme 3, Lemme 4]{DP2000Semiabel}. Moreover, our proof only uses the fundamental inequality, avoids any reliance on the Manin–Mumford conjecture, which is inapplicable in this context due to the potential non-density of height-zero points, as illustrated in Example \ref{exam_nondense}.
\end{rema}
\subsection{$X$ is special $\Rightarrow \mr{ess}_{\ovl L}(X)=0$}
In this subsection, we will show that a special subvariety contains Zariski dense sets of small points.
\begin{lemm}
    Let $Q=T_y^*N-N\in \mr{Pic}^0(A)$ for $y\in A(K)$. This gives a semiabelian variety $$0\rightarrow \mathbb G_m\rightarrow G\xrightarrow{\pi} A\rightarrow 0.$$ 
    For $x\in A(\ovl K)$, if $h_{\ovl N}(x)=0$, then we have $$\mr{ess}_{\ovl M}(\pi^{-1}(x))=0.$$ 
\end{lemm}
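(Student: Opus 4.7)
The plan is to apply Lemma \ref{lemm_aux} with $t'=0$ and $X=\pi^{-1}(x)\subset G_{\ovl K}$. Since $\widehat h_{\ovl N}(x)=0$, on the fiber $X$ we have $\widehat h_{\ovl L}=\widehat h_{\ovl M}+\widehat h_{\ovl N}\circ \pi=\widehat h_{\ovl M}$, so $\mathrm{ess}_{\ovl L}(X)=\mathrm{ess}_{\ovl M}(X)$ and it suffices to prove $\mathrm{ess}_{\ovl L}(X)=0$. By Lemma \ref{lemm_aux} this reduces to verifying the two intersection identities $\ovl N\cdot [\{x\}]=0$ and $\ovl M^2\cdot[\ovl X]=0$, where $\ovl X=\ovl{\pi^{-1}(x)}\cong \mathbb P^1$ is the closure of $X$ in $\ovl G$. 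The first is immediate from $\ovl N\cdot [\{x\}]=[\kappa(x):K]\,\widehat h_{\ovl N}(x)=0$.

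For the second identity I would use $\ovl M=\ovl D_{[0]}+\ovl D_{[\infty]}$ together with the relation $\ovl D_{[\infty]}=\ovl D_{[0]}+\ovl \pi^*\ovl Q$ from property (c) of Theorem \ref{theo_eff_boud_div}, to write
\[
\ovl M^2\cdot [\ovl X]=4\,\ovl D_{[0]}^{\,2}\cdot[\ovl X]+4\,\ovl D_{[0]}\cdot\ovl \pi^*\ovl Q\cdot[\ovl X]+(\ovl \pi^*\ovl Q)^2\cdot[\ovl X].
\]
Since $\ovl\pi|_{\ovl X}$ factors through the constant map to $x$, the two terms involving $\ovl \pi^*\ovl Q$ reduce by the projection formula to multiples of $\widehat h_{\ovl Q}(x)$. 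Using $\ovl Q=T_y^*\ovl N-\ovl N-y^*\ovl N$ and the parallelogram identity for the Néron--Tate quadratic form, one finds $\widehat h_{\ovl Q}(x)=\widehat h_{\ovl N}(x+y)-\widehat h_{\ovl N}(x)-\widehat h_{\ovl N}(y)=2\langle x,y\rangle$. Since the Néron--Tate pairing on $A(\ovl K)$ is positive semi-definite and $\widehat h_{\ovl N}(x)=0$, the Cauchy--Schwarz inequality gives $\langle x,y\rangle=0$, so both terms vanish.

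It remains to show $\ovl D_{[0]}^{\,2}\cdot[\ovl X]=0$. For this I would exploit the dynamical origin of $\ovl D_{[0]}$: combining its defining sequence $\{(\ovl{\mc G}_m,[m]^*\mc D_{[0]},m)\}$ from Theorem \ref{theo_eff_boud_div} with $[m]_*[\ovl X]=m\,[\ovl X_m]$ (where $\ovl X_m:=\ovl{\pi^{-1}(mx)}$ and the degree is $m$ because $[m]$ restricts to the $m$-th power map on the fiber $\mathbb G_m$-torsor), the projection formula yields
\[
\ovl D_{[0]}^{\,2}\cdot[\ovl X]=\lim_{m\to\infty}\frac{1}{m}\,\mc D_{[0]}^{\,2}\cdot [\ovl{\mc X}_m]\cdot \mc H^{\dim\mathfrak B-1},
\]
where $\ovl{\mc X}_m$ is the closure of $\ovl X_m$ in $\ovl{\mc G}$. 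Since $\mc D_{[0]}$ is the image of the zero section $s_0\colon\mc A\hookrightarrow\ovl{\mc G}$, arithmetic adjunction reduces the right-hand side to $[\kappa(mx):K]\,h_{\mc Q}(mx)$, where $\mc Q:=s_0^*\mc D_{[0]}$ is a model of $Q$. Comparing the model height with the canonical one yields $h_{\mc Q}(mx)=\widehat h_{\ovl Q}(mx)+O(1)=2m\langle x,y\rangle+O(1)=O(1)$, and $[\kappa(mx):K]\le[\kappa(x):K]$ is bounded, so the sequence is bounded in $m$ and the limit is zero.

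The main obstacle lies in carrying out this last dynamical/adjunction step rigorously: one must identify $s_0^*\ovl D_{[0]}$ with the canonical adelic line bundle $\ovl Q$ (via uniqueness of adelic metrics satisfying $[n]^*(\cdot)=n(\cdot)$ for a fixed underlying line bundle, together with $s_0\circ[n]=[n]\circ s_0$) and ensure that the $O(1)$ discrepancy between the model height $h_{\mc Q}$ and the canonical height $\widehat h_{\ovl Q}$ is uniform in $m$. Granting these ingredients, the proof amounts to a mechanical application of Lemma \ref{lemm_aux} together with the two vanishings extracted from $\widehat h_{\ovl N}(x)=0$.
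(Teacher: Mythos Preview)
Your argument is correct, but it takes a longer road than the paper's. Both proofs begin identically by invoking Lemma~\ref{lemm_aux} to reduce to $\ovl N\cdot[\{x\}]=0$ (immediate) and $\ovl M^2\cdot[\ovl X]=0$. The divergence is in how the second identity is established.

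The paper exploits a single structural fact: in every model $\ovl{\mc G}_m$ the divisors $[m]^*\mc D_{[0]}$ and $[m]^*\mc D_{[\infty]}$ have \emph{disjoint support}, so $\ovl D_{[0]}\cdot\ovl D_{[\infty]}\cdot[\ovl X]=0$ as a limit of zeros. Then
\[
\ovl M^2\cdot[\ovl X]=(\ovl D_{[0]}+\ovl D_{[\infty]})^2\cdot[\ovl X]=(\ovl D_{[\infty]}-\ovl D_{[0]})^2\cdot[\ovl X]=(\ovl\pi^*\ovl Q)^2\cdot[\ovl X]=0,
\]
the last equality by the projection formula ($\ovl\pi|_{\ovl X}$ has zero-dimensional image). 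In particular the paper proves $\ovl M^2\cdot[\ovl X]=0$ for \emph{all} $x\in A(\ovl K)$; the hypothesis $\widehat h_{\ovl N}(x)=0$ is used only for the second condition in Lemma~\ref{lemm_aux}.

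Your route---expanding via $\ovl M=2\ovl D_{[0]}+\ovl\pi^*\ovl Q$ and treating $\ovl D_{[0]}^2\cdot[\ovl X]$ by a dynamical/adjunction limit---works, but is heavier and uses the hypothesis more substantively (through Cauchy--Schwarz on the N\'eron--Tate pairing). Two small remarks: $(\ovl\pi^*\ovl Q)^2\cdot[\ovl X]$ vanishes directly by the projection formula, not because $\widehat h_{\ovl Q}(x)=0$; and the ``main obstacle'' you flag is not actually needed for your argument, since you only use that $s_0^*\mc O(\mc D_{[0]})$ is a model of $\pm Q$ (so its model height differs from $\widehat h_{\ovl Q}$ by a bounded function), which is elementary---no identification of adelic metrics via uniqueness is required.
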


\begin{proof}
By Lemma \ref{lemm_aux}, it suffices to prove
$$\ovl M^2\cdot[\ovl{\pi}^{-1}(x)]=0.$$
Recall the sequence $(\ovl{\mc G}_m,[m]^*\mc D_{[\infty]},m)$ (resp. $(\ovl{\mc G}_m,[m]^*\mc D_{[\infty]},m)$) from Theorem \ref{theo_eff_boud_div}, defining the adelic divisor $\ovl D_{[\infty]}$ (resp. $\ovl D_{[0]}$). Since $[m]^*\mc D_{[\infty]}$ and $[m]^*\mc D_{[0]}$ have disjoint support, we have
$$\ovl D_{[\infty]}\cdot\ovl D_{[0]}\cdot[\ovl{\pi}^{-1}(x)]=\lim_{m\to\infty}\frac{1}{m^2}([m]^*\mc D_{[\infty]})\cdot([m]^*\mc D_{[0]})\cdot\mc Y_m=0,$$
where $\mc Y_m$ is the Zariski closure of $\ovl{\pi}^{-1}(x)$ in $\ovl{\mc G}_m$. So
$$\ovl M^2\cdot[\ovl{\pi}^{-1}(x)]=(\ovl D_{[\infty]}+\ovl D_{[0]})^2\cdot[\ovl{\pi}^{-1}(x)]=(\ovl D_{[\infty]}-\ovl D_{[0]})^2\cdot[\ovl{\pi}^{-1}(x)].$$
By the projection formula,
$$(\ovl D_{[\infty]}-\ovl D_{[0]})^2\cdot[\ovl{\pi}^{-1}(x)]=\pi^*\ovl Q^2\cdot[\ovl{\pi}^{-1}(x)]=0.$$
\end{proof}

\begin{prop}
Let $G$ be a semiabelian variety, $H\subset G$ a semiabelian subvariety and $\phi:G\to G/H$ the quotient homomorphism. Let $X\subset G/H$ be a closed subvariety. If $\mr{ess}_{\ovl{L}'}(X)=0$, then $\mr{ess}_{\ovl{L}}(\phi^{-1}(X))=0.$
\end{prop}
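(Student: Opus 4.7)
The plan is to construct a Zariski-dense subset of $\phi^{-1}(X)$ consisting of points of arbitrarily small canonical height, by lifting dense small-height points of $X$ to $G$ and then translating within the fibers by torsion elements of $H$. Precisely, let $S_\epsilon := \{y \in X(\ovl K) : \widehat h_{L'}(y) < \epsilon\}$, which is Zariski dense in $X$ by hypothesis; since the fibers of $\phi$ are $H$-cosets (hence irreducible of fixed dimension), the preimage $\phi^{-1}(S_\epsilon)$ is Zariski dense in $\phi^{-1}(X)$. I will attach to each $y \in S_\epsilon$ a Zariski-dense subset $T_y \subset \phi^{-1}(y)$ whose points have canonical height bounded by $C\epsilon$ for a constant $C>0$ depending only on the data; the union then witnesses $\mr{ess}_{\ovl L}(\phi^{-1}(X)) \leq C\epsilon$, and letting $\epsilon \to 0$ concludes.

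The core step is a uniform lifting lemma: there exists $C > 0$ such that for every $y \in (G/H)(\ovl K)$, some $\widetilde y \in \phi^{-1}(y)$ satisfies $\widehat h_L(\widetilde y) \leq C\, \widehat h_{L'}(y)$. To prove it I would exploit the decomposition of $H$ via its torus part $T_0 := H \cap \mbb G_m^t$ and its image $H_A := \pi(H) \subset A$, factoring $\phi$ as $G \to G/T_0 \to G/H$. On the abelian level, Poincaré reducibility yields a complement $A' \subset A$ giving an isogeny $H_A \times A' \to A$, under which $A \to A/H_A$ becomes projection up to isogeny; choosing $\ovl N$ and $\ovl{N'}$ compatibly, this provides a lift $a$ of $\pi'(y)$ with $\widehat h_N(a) \leq C_1 \widehat h_{N'}(\pi'(y))$. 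On the toric level, choosing coordinates on $\mbb G_m^t$ so that $T_0$ is a direct factor produces a section of the $T_0$-torsor $\pi^{-1}(a) \cap \phi^{-1}(y)$, furnishing a lift whose $\widehat h_M$-contribution is controlled by $\widehat h_{M'}(y)$ up to a universal constant. Summing the abelian and toric contributions yields the claim.

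Given such a $\widetilde y$, set $T_y := \{\widetilde y + \tau : \tau \in H(\ovl K)_{\mr{tors}}\}$. Since prime-to-$p$ torsion is Zariski dense in the semiabelian variety $H$ over $\ovl K$, $T_y$ is Zariski dense in $\widetilde y + H = \phi^{-1}(y)$. The canonical height is preserved along such translates: for $\tau$ of order $n$, the relation $[n]^*\ovl M = n\ovl M$ together with $[n](\widetilde y + \tau) = [n]\widetilde y$ gives $n\widehat h_M(\widetilde y + \tau) = \widehat h_M([n]\widetilde y) = n\widehat h_M(\widetilde y)$ via the Tate limit, while the quadratic-form structure of $\widehat h_N$ on $A$ (vanishing and pairing trivially with torsion) yields $\widehat h_N(\pi(\widetilde y + \tau)) = \widehat h_N(\pi(\widetilde y))$. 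Hence every $g \in T_y$ satisfies $\widehat h_L(g) = \widehat h_L(\widetilde y) \leq C\epsilon$.

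The main obstacle is the uniform lifting lemma, for which Poincaré reducibility on the abelian part and the coordinate decomposition on the toric part must be assembled carefully so that the universal constant $C$ depends only on the data and not on $y$. An alternative route would apply Lemma \ref{lemm_aux} with $t' = 0$ to both $X \subset G/H$ and $\phi^{-1}(X) \subset G$, reducing the claim to comparing intersection numbers on $A$ with those on $A/H_A$ via the projection formula along $\rho_A : A \to A/H_A$; this still requires Poincaré reducibility together with a compatible choice of the ample canonical line bundles $\ovl N$ and $\ovl{N'}$ to make the comparison explicit.
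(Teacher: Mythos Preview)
Your uniform lifting lemma is false, and Example~\ref{exam_nondense} is a direct counterexample. There $H=\mbb G_m$, $G/H=E_{\ovl K}$, and the constant non-torsion point $y\in E(k)$ has $\widehat h_{N}(y)=0$. Your lemma would then produce $\widetilde y\in \pi^{-1}(y)(\ovl K)$ with $\widehat h_L(\widetilde y)\leq C\cdot 0=0$, hence $\widehat h_M(\widetilde y)=0$; but the example states explicitly that $\widehat h_M$ on the fiber $\pi^{-1}(y)\simeq\mbb G_m$ vanishes nowhere (the only zeros on $\mbb P^1$ are the boundary points $0,\infty$). The error is in your toric step: a trivialization of the torsor $\pi^{-1}(a)$ identifies it with $\mbb G_m^t$ as a variety, but the restriction of $\widehat h_M$ to that fiber is \emph{not} the standard Weil height on $\mbb G_m^t$. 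The canonical metric on $\ovl M$ is built from the model bundles $\mc Q^{(i)}$ via Tate's limit, and when the $Q^{(i)}$ are not constant the fiberwise height of your ``section'' point is not controlled by $\widehat h_{M'}(y)$ at all. Your translation-by-torsion step is fine; the gap is that there need not be any small-height seed point to translate.

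This is precisely the ``new phenomenon'' flagged in the introduction: over function fields a special subvariety can fail to contain a single point of canonical height zero, so an argument that manufactures small points explicitly cannot succeed in general. The paper instead uses Lemma~\ref{lemm_aux} to convert $\mr{ess}_{\ovl L}=0$ into the vanishing of certain intersection numbers, which are then verified by projection formulas: first for $H$ abelian (Poincar\'e reducibility reduces to a direct factor, where the claim is trivial), then for $H=\mbb G_m$ (a direct computation showing the terms $F(i)$ vanish, using the preceding lemma on fibers $\pi^{-1}(x)$), and finally for general $H$ by filtering through $G/\mbb G_m^i$. This is essentially your alternative route, and it is the one that works.
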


\begin{proof}
If $H$ is an abelian variety, i.e., $\pi|_H:H\rightarrow A$ is an injective homorphism of abelian varieties. By the Poincaré reducibility theorem, there is a quotient homomorphism $\psi:A\to H$ such that the composition $H\to A\to H$ is an isogeny. So $G$ is isogenous to $H\times\ker(\psi\circ\pi)$ and $\ker(\psi\circ \pi)\to G\to G/H$ is an isogeny. By Lemma \ref{lemm_ess_isogeny} (2), it suffices to consider the case $G=G/H\times H$, which is trivial.  

If $H=\mathbb{G}_m$, then $G/H$ corresponds to $\underline\eta'\in \mr{Ext}_K^{1}(A,\mathbb{G}_m^{t-1})$. We may take $\eta_0\in\mr{Ext}_K(A,\mbb{G}_m)$ such that, after taking finite extension of $K$, $(\eta_0,\underline \eta')\in \mr{Ext}^{1}_K(A,\mathbb{G}_m^t)$ gives a semiabelian variety isogenous to $G$. Again by Lemma \ref{lemm_ess_isogeny} (2), we may assume that $G$ is given by $(\eta_0,\underline \eta').$ Let $Q_0\in \mr{Pic}^0(A)$ correspond to $\eta_0.$ Then $\ovl G=\mbb P(\mc O_A\oplus Q_0^\vee)\times_A\ovl{G/H},$ and $\phi$ extends to $\ovl \phi:\ovl G\rightarrow \ovl{G/H}.$

Let $\ovl M_0$ (resp. $\ovl M'$) be the canonical line bundle associated with the boundary divisor on $\mbb P(\mc O_A\oplus Q_0^\vee)$ (resp. $\ovl{G/H}$). By Lemma \ref{lemm_aux}, it suffices to prove that 
$$(p^*\ovl M_0+\ovl\phi^*\ovl M')^{e+2}\cdot \ovl \pi^*\ovl N^{d-e}\cdot[\ovl{\phi^{-1}(X)}]=0,$$
where $p:\ovl G\rightarrow \mbb P(\mc O_A\oplus Q_0^\vee)$, $d=\dim X$ and $e=\dim X-\dim(\pi(X)).$

Considering the projections $\ovl {\phi^{-1}(X)}\xrightarrow{\ovl\phi} \ovl X$ and $\ovl {\phi^{-1}(X)}\xrightarrow{p}\ovl \pi_0^{-1}(\ovl\pi(\ovl {\phi^{-1}(X)}))$
where $\ovl \pi_0:\mbb P(\mc O_A\oplus Q_0^\vee)\rightarrow A,$
we see that 
$$F(i):=p^*\ovl M_0^{e+2-i}\cdot \ovl \phi^*\ovl M'^{i}\cdot\ovl \pi^*\ovl N^{d-e}\cdot[\ovl{\phi^{-1}(X)}]$$
vanishes for $i\not=e,e+1$ due to dimension reasons. Since $\mr{ess}_{\ovl L'}(X)=0$, $\ovl M'^{e+1}\cdot \ovl \pi'^*\ovl N^{d-e}\cdot[\ovl X]=0$ where $\ovl \pi':\ovl{G/H}\rightarrow A.$ Hence $F(e+1)=0$.

Notice that $\pi'(X)$ contains a Zariski dense set of points of height $0$. We see $\mr{ess}_{\ovl M_0+\ovl \pi_0^*\ovl N}(\ovl \pi_0^{-1}(\ovl {\pi'(X)}))=0,$ whence $$\ovl M_0^2\cdot\ovl \pi_0^*\ovl N^{d-e}\cdot[\ovl \pi_0^{-1}(\ovl {\pi'(X)})]=0.$$ Again by the projection formula, $F(e)=0.$

For a general $H$, given by the exact sequence:
$$0\to\mathbb{G}_m^{t'}\to H\to B\to 0,$$
we reduce step by step via intermediate quotients $G_i=G/\mathbb{G}_m^i$, and apply the above cases inductively. 

\end{proof}

\begin{coro}
    If $X\subset G_{\ovl K}$ is special, then $\mr{ess}_{\ovl L}(X)=0.$
\end{coro}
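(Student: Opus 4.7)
The plan is a three-step reduction bringing the problem to the constant case. First, I would let $H := \mr{Stab}_0(X)$ and consider the quotient $\phi\colon G_{\ovl K}\to \widetilde G = G_{\ovl K}/H$, so that $X = \phi^{-1}(\widetilde X)$ with $\widetilde X := X/H$; the immediately preceding proposition then reduces the problem to showing $\mr{ess}_{\ovl L_{\widetilde G}}(\widetilde X) = 0$. Writing $\widetilde X = h(X_0\ot_k \ovl K) + x$ with $x\in \widetilde G(\ovl K)$ torsion, I would next show that translation by $x$ preserves the canonical height: if $[n]x = 0$, then $[n](y+x) = [n]y$ for every $y$, so from $[n]^*\ovl M = n\ovl M$ one gets $\widehat h_M(y+x) = \widehat h_M(y)$, while $\pi(x)$ is torsion on the abelian quotient and the classical torsion-invariance of $\widehat h_N$ handles the abelian summand. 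Hence $\widetilde X$ may be replaced by $Y := h(X_0\ot_k \ovl K)$ without changing the essential minimum.

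Second, the homomorphism $h\colon G_0\ot_k\ovl K \to \widetilde G$ has finite kernel by hypothesis, so it is a homomorphism of semiabelian varieties in the sense of Lemma \ref{lemm_ess_isogeny}. Applying part (1) of that lemma to $h$ reduces matters to proving $\mr{ess}_{\ovl L_0}(X_0\ot_k \ovl K) = 0$ inside $G_0\ot_k\ovl K$, where $\ovl L_0$ is a canonical adelic line bundle on the compactification of this semiabelian variety.

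Third, I would treat the constant case directly. Since $k$ is algebraically closed, $X_0(k)$ is Zariski dense in $X_0\ot_k\ovl K$, so it suffices to check that $\widehat h_{L_0}(p) = 0$ for every $p\in G_0(k)$. The key point is that the canonical adelic line bundle on the constant semiabelian variety $G_0\ot_k\ovl K$ is induced by the constant $\mathfrak B$-model $\ovl{G_0}\times_k\mathfrak B$, with line bundles pulled back from the first factor. This model is tautologically $[n]$-equivariant, so its model metric is already a fixed point of the Tate iteration and therefore coincides with the canonical one. For $p\in G_0(k)$ the associated section $\mathfrak B\to \ovl{G_0}\times_k\mathfrak B$, $b\mapsto (p,b)$, is constant, the pullback of the canonical line bundle along it is trivial on $\mathfrak B$, and the height vanishes.

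The most delicate step is the last one: verifying that the explicit construction of $\ovl M$ in Theorem \ref{theo_eff_boud_div} produces the constant metric on a constant semiabelian variety. This is essentially a uniqueness-of-Tate-limit argument, together with the corresponding classical statement for the canonical line bundle on a constant abelian variety; modulo this verification, each of the three reductions above is a direct application of previously established results.
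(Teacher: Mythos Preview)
Your proposal is correct and coincides with the paper's intended argument. The paper states the Corollary without a separate proof because it follows by exactly the chain you describe: the preceding Proposition handles the quotient by $\mr{Stab}_0(X)$, torsion translation preserves the canonical height, Lemma \ref{lemm_ess_isogeny}(1) pushes the problem down through $h$, and the constant case is disposed of by noting that the canonical adelic line bundle on a constant semiabelian variety is induced by the constant model, so $k$-points have height zero.
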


\subsection{Criterion for the quasi-splitness}\label{subsec_reduction}
From now on, we assume that $A=A_0\times_k A_1$ where $A_0$ is an abelian variety over $k$, and $A_1$ is an abelian variety over $K$ with trivial $\ovl K/k$-trace. We further assume that $N=(N_0\ot_k K)\boxtimes N_1$ where $N_0$ and $N_1$ are symmetric ample line bundles on $A_0$ and $A_1$ respectively. 
By abuse of notation, we denote by $\ovl N_0$ the canonical bundle of adelic lines associated with $N_0\ot_k K$ on $A_0\ot_k K$. Then $\ovl N=\ovl N_0\boxtimes \ovl N_1.$

Recall that after taking a finite field extension, our semiabelian variety is associated with $Q^{(1)},\dots,Q^{(t)}\in\mr{Pic}^0(A)$ by the Weil-Barsotti formula as in \S\ref{subsec_height_semi}. For each $i=1,\dots, t$, $Q^{(i)}= Q_0^{(i)}\boxtimes Q_1^{(i)}$, where $Q_0^{(i)}\in \mr{Pic}^0(A_0\ot_k K)$ and $Q_1^{(i)}\in \mr{Pic}^0(A_1)$.

Note that by definition, the canonical line bundle associated to $Q_0^{(i)}$ is induced by a model $(A_0\times \mathfrak B, \mc Q_0^{(i)})$. $Q_0^{(i)}$ is constant if and only if $\mc Q_0^{(i)}=\mr{pr}_{A_0}^*{Q'}$ for some $Q'\in \mr{Pic}^0(A_0).$

We see that $G$ is quasi-split if and only if $Q_0^{(i)}$ are constant and $Q_1^{(i)}$ are torsion. In this subsection, we will give a criterion, eventually showing that $G$ is quasi-split if it contains a "large" closed subvariety which is of essential minimum $0.$

\begin{theo}\label{theo_constant_torsion_test}
    Let $t'\geq 0$. We denote by $\ovl M'$ the canonical line bundle on $(\mbb P^1)^{t'}$.
    Let \begin{align*}
        &p_2:(\mbb P^1)^{t'}\times \ovl G\rightarrow \ovl G\text{ and }
        \\&p:(\mbb P^1)^{t'}\times \ovl G\rightarrow (\mbb P^1)^{t'}\times A.
    \end{align*}
    Let $X\subset \mbb G_m^{t'}\times_k \ovl G$ be a closed variety such that
    $p|_{\ovl X}:\ovl X\rightarrow Y\times_k A_1$ is generically finite of degree $r>0$, for some closed $k$-subvariety $Y\subset (\mbb P^1)^{t'}\times_k A_0$. We assume that \begin{itemize}
        \item the projection of $Y$ to $A_0$ generates $A_0,$ and 
        \item $\displaystyle p_2^*\ovl M\cdot p^*(\ovl M'\boxtimes \ovl N)^{\dim X}\cdot [\ovl X]=0.$
    \end{itemize} 
    Then $G$ is quasi-split.
\end{theo}
\begin{proof}
     We may assume that $Y$ is normal. Indeed, one may replace $Y$ by its normalization and take the base change of $X$.
     We incorporate the notations in Theorem \ref{theo_eff_boud_div}, and define $$\ovl E_{[0]}^{(i)}:=p_2^* \ovl D_{[0]}^{(i)}\cdot [X]\text{ and }\ovl E_{[\infty]}^{(i)}:=p_2^*\ovl D_{[\infty]}^{(i)}\cdot [X].$$ Here we give a brief explanation, since $X\not\subset p_2^{-1}([0]\cup[\infty])$, $p_2^*D_{[0]}^{(i)}\cdot[X]$ (resp. $p_2^*D_{[\infty]}^{(i)}\cdot[X]$) is a Cartier divisor on $X$, and the restriction of Green functions on $X$ are Green functions with respect to $p_2^*D_{[0]}^{(i)}\cdot[X]$ (resp. $p_2^*D_{[\infty]}^{(i)}\cdot[X]$), which gives to the adelic divisor $\ovl E_{[0]}^{(i)}$ (resp. $\ovl E_{[\infty]}^{(i)}$). As $\ovl D_{[0]}^{(i)}$ and $\ovl D_{[\infty]}^{(i)}$ are effective, $\ovl E_{[0]}^{(i)}$ and $\ovl E_{[\infty]}^{(i)}$ are also effective.

    Fix $i=1,\dots,t$.
    Notice that \begin{align*}
        &0=p_2^*\ovl M\cdot p^*(\ovl M'\boxtimes \ovl N)^{\dim X}\cdot [X]=p^*(\ovl M'\boxtimes \ovl N)^d\cdot\sum_{1\leq j\leq t} \left(\ovl E_{[0]}^{(j)}+\ovl E_{[\infty]}^{(j)}\right)\\
        &\kern 7em\geq (\ovl M'\boxtimes \ovl N)^d\cdot p_*\left(E_{[0]}^{(i)}+E_{[\infty]}^{(i)}\right)\geq 0.
    \end{align*}
    Applying Proposition \ref{prop_trivial_gbc} to each irreducible component of $p_*\left(E_{[0]}^{(i)}+E_{[\infty]}^{(i)}\right)$, we obtain that
    \begin{enumerate}
        \item a divisor $D_0$ (defined over $k$) on $Y,$ and 
        \item a divisor $D_1$ on $A_1$ whose irreducible components are torsion translates of abelian subvarieties in $A_1,$ 
    \end{enumerate}
    such that $$p_*(E_{[\infty]}^{(i)}- E_{[0]}^{(i)})=\mr{pr}_Y^*D_0+\mr{pr}_{A_1}^*D_1.$$
    
    Since $Y$ is normal, by Theorem \ref{theo_eff_boud_div}, $$p_*\left(E_{[\infty]}^{(i)}-E_{[0]}^{(i)}\right)=rc_1(\mr{pr}_A^*Q^{(i)})$$
    where $\mr{pr}_A:Y\times_k A_1\rightarrow A.$ 
    Hence there exists a rational section $s_0$ of $rQ_0^{(i)}\ot_{\mc O_{A_0}} \mc O_{Y_K}$, and 
    a rational section $s_1$ of $rQ_1^{(i)}$ such that $\mr{div}(s_0)=D_0\ot_k K$ and $\mr{div}(s_1)=D_1.$ 
    
    %$[m]^*\mc O_{A_1}(B+t)=[m]^* T_t^*\mc O_{A_1}(B)=[m]^*\mc O_{A_1}(B)=m^2\mc O_{A_1}(B)$
    After taking pull-back by an isogeny, one may assume that $D_1=\sum \lambda_i [B_i]$, where $\lambda_i\in\Z$ and $B_i$ are abelian subvarieties of codimension $1$ in $A_1$. Hence $rQ_1^{(i)}$ is a $2$-torsion since it is both anti-symmetric and symmetric. From now on, we may assume that $Q_1^{(i)}$ are trivial for all $i=1,\dots, t$ after taking an isogeny of $G$.
    
    On the other hand, $rQ_0^{(i)}|_{Y_K}=\mc O_{Y}(D_0)\ot_k K$, which shows that $r\mc Q_0^{(i)}|_{Y\times \mathfrak B}=\mc O_{Y}(D_0)\boxtimes \mc F$ for some line bundle $\mc F$ on $\mathfrak B$. In fact, $\mathcal F$ has to be trivial. This can be done by a similar argument as above on the model $$\mbb P(\mc O_{A_0\times \mathfrak B}\oplus (\mc Q_0^{(1)})^\vee)\times_{A_0\times \mathfrak B}\cdots\times_{A_0\times \mathfrak B}\mbb P(\mc O_{A_0\times \mathfrak B}\oplus (\mc Q_0^{(t)})^\vee).$$

    As the projection of $Y$ to $A_0$ generates $A_0$, replacing $X$ by $[n]X$ for every $n\in\Z$, we obtain that for Zariski dense $x\in A_0(k)$, $r' r\mc Q_0^{(i)}|_{\{x\}\times \mathfrak B}$ is trivial, where $r'$ is the degree of the normalization of $Y$. By the seesaw theorem, $r'rQ_0^{(i)}$ is constant, which concludes the proof.

\end{proof}

\subsection{Applying the relative Faltings--Zhang maps}
Since $G$ is a $\mbb{G}_m^t$-torsor over $A$, we have an isomorphism
$$\beta_n:G_{/A}^n\to(\mbb{G}_m^t)^{n-1}\times G,$$
where $G_{/A}^n$ is $n$-fold product $G\times_A \cdots\times_A G.$
We define the relative Faltings--Zhang map as
$$\alpha_n=(\mr{id}_{(\mbb{G}_m^t)^{n-1}}\times\pi)\circ\beta_n:G\times_A  \cdots\times_A G\to(\mbb{G}_m^t)^{n-1}\times A.$$
%Over a local trivilization, this becomes Falting-Zhang map for $\mbb{G}_m^t.$ 
Denote the $i$-th projection $$p_i:\ovl {G^n_{/A}}=\ovl G\times _A\cdots \times_A \ovl G\rightarrow \ovl G$$
and, by abuse of notation, we write $\pi:G^n_{/A}\rightarrow A$. Define $\ovl L_n:=\sum_{1\leq i\leq n} p_i^*\ovl M +\ovl \pi^*\ovl N,$ which is the canonical line bundle on $G^n_{/A}.$

Let $S=\pi(X),$ and $X^n_{/S}=\underbrace{X\times_S\cdots\times_S X}_{n\text{ times}}\subset G^n_{/A}.$
\begin{prop}\label{prop_ess_min_relFZ} Let $X'$ be an irreducible component of $X^n_{/S}.$
If $\mr{ess}_{\ovl{L}}(X)=0$, then $\mr{ess}_{\ovl{L}_n}(X')=0$.
\end{prop}

\begin{proof}
It suffices to show that
$$(\sum_{1\leq i\leq n} p_i^*\ovl M)^{ne+1}\cdot\ovl \pi^*\ovl N^{\dim \pi(X)}\cdot [\ovl X']=0$$
where $e=\dim X-\dim \pi(X).$
The left hand side is a linear combination of
$$p_1^*\ovl M^{j_1}\cdot p_2^*\ovl M^{j_2}\cdots p_n^*\ovl M^{j_n}\cdot\pi^*\ovl N^{\dim \pi(X)}\cdot [\ovl X']$$
for $j_1,\dots,j_n\geq 0$ and $j_1+j_2+\cdots+j_n=ne+1$. We will show that every term is $0$. Without loss of generality, we assume $j_n=\min j_i$. By the pigeon-hole principle, $j_n\leq e$

If $j_n\leq e-1$, consider the composition $X'\hookrightarrow X^n_{/S}\to X^{n-1}_{/S}$, where the second morphism omits the $n$-th coordinate. Since 
$$(ne+1-j_n)+\dim \pi(X)\geq (n-1)e+\dim \pi(X)+2>\dim X^{n-1}_{/S}+1,$$ the intersection number is $0$ by the projection formula.

If $j_n=e$, consider the morphism $X'\rightarrow X$. Again, by the projection formula, the intersection number is proportional to
$\ovl M^{e+1}\cdot\ovl \pi^*\ovl{N}^{\dim \pi(X)}\cdot [\ovl X],$
which is $0$ by Lemma \ref{lemm_aux}.
\end{proof}

\begin{prop}\label{prop_rel_FZ_gen_fin}
Assume that $X$ has trivial stabilizer. For $n$ large enough, there exists an irreducible component $X'$ of $X^n_{/S}$ such that the restriction of $\alpha_n$ is generically finite from $X'$ to its image.
\end{prop}
\begin{comment}
\begin{proof}
    Let $\eta$ be the generic fiber of $S$. Then $G_\eta$ is a $\mbb{G}_{m}^t$-torsor. The stabilizer of $X_\eta\subset G_\eta$ is trivial since $\mr{Stab}(X)=0$.
    The same argument in \cite[Lemma 3.1]{Zhang1998Bogomolov} gives that $X_\eta^n\rightarrow \alpha_n(X_\eta^n)$ is birational for $n\gg0,$ which concludes the proof.
\end{proof}
\end{comment}

\begin{proof}
    Let $\eta$ be the generic point of $S$.
    Let $\ovl \eta$ be a geometric generic point of $S$. Then $G_{\ovl\eta}$ is a $\mbb{G}_{m}^t$-torsor. Let $X_0$ be an irreducible component of $X_{\ovl\eta}$. The stabilizer of $X_1\subset G_{\ovl\eta}$ is trivial since $\mr{Stab}(X)=0$. 
    Following the same argument as in \cite[Lemma 3.1]{Zhang1998Bogomolov}, 
    we obtain that the restriction of $\alpha_n$ to $X_0^n$ is birational to its image. Let $X'$ be the closure of $X_0^n$ in $G$, which is as desired.
\end{proof}

\begin{proof}[Proof of Theorem \ref{theo_GBC}]
Since $G_{\ovl K}/\mr{Stab}_0(X)\rightarrow G_{\ovl K}/\mr{Stab}(X)$ is an isogeny, after taking the quotient by $\mr{Stab}(X)$, 
we may assume that $\mr{Stab}(X)=0$ and $\mr{ess}_{\ovl L}(X)=0$.

We further assume that $A=A_0\times_k A_1$ as in Subsection \ref{subsec_reduction}.

We may take sufficiently large $n$ such that there exists an irreducible component $X'$ of $X^n_{/S}$, which is generically finite to $\alpha_n(X')$ as in Proposition \ref{prop_rel_FZ_gen_fin}. Notice that $\alpha_n(X')$ and its image $\beta_n(X')$ are of essential minimum $0$ due to Proposition \ref{prop_ess_min_relFZ} and Lemma \ref{lemm_ess_isogeny}. By Proposition \ref{prop_trace_gbc}, $\ovl {\alpha_n(X')}\subset (\mathbb P^1_k)^{t(n-1)}\times_k A_{\ovl K}$ is of form $Y\times_k T$ where $Y$ is a closed $k$-subvariety of $(\mathbb P^1_k)^{t(n-1)}\times_k A_0$, and $T$ is a torsion translate of an abelian subvariety in $A_1\ot_K\ovl K$. After taking an isogeny and shrinking $A$, we may assume that $T=A_1$ and the projection of $Y$ to $A_0$ generates $A_0$. By Lemma \ref{lemm_aux}, we are in the situation of Theorem \ref{theo_constant_torsion_test}. Therefore, $G$ is quasi-split. Finally, since $\mr{Stab}(X)=0$, $X$ is a torsion translate of a constant variety due to Corollary \ref{coro_quasi-split}.
\end{proof}

\section{Appendix A}\label{sec_Appendix A}
In this appendix, we will prove the modified fundamental inequality in Theorem \ref{theo_fund_ineq}. This addresses a technical subtlety: The canonical line bundle on the semiabelian variety is not a priori a limit of nef line bundles on models.
Our argument follows from \cite[\S 5]{zhangposvar}.

\begin{prop}\label{prop_ample_fin_pullback} Let $X$ be a projective variety over $K$, and $\ovl L$ be an adelic line bundle on $X$.
    \begin{enumerate}
        \item[\textnormal{(1)}] If $L$ is ample, then $\lambda_{\min}(\ovl L)>-\infty.$
        \item[\textnormal{(2)}] Let $f:Y\rightarrow X$ be a proper morphism of projective $K$-varieties. Then $\lambda_{\min}(f^*\ovl L)\geq \lambda_{\min}(\ovl L).$
    \end{enumerate}
\end{prop}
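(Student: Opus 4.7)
The plan is to prove (2) by a direct pullback of sections, and (1) by constructing sections as $k$-linear combinations of a carefully chosen basis, exploiting the function-field product formula.

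For part (2), given $t < \lambda_{\min}(\ovl L)$ and proper closed $W = W_1 \cup \cdots \cup W_k \subsetneq Y$, I would form
$$Z := \bigcup_{i : f|_{W_i} \text{ not dominant}} \overline{f(W_i)} \subsetneq X$$
(a proper closed subvariety of $X$, possibly empty). Applying the definition of $\lambda_{\min}(\ovl L)$ to $Z$ yields $s \in H^0(X, nL)$ with $\widehat\deg_{n\ovl L}(s) \geq nt$ and $s$ nonvanishing at every generic point of $Z$. The pullback $f^*s \in H^0(Y, nf^*L)$ does not vanish at any generic point of $W$: if $f|_{W_i}$ is dominant, its generic point maps to the generic point of $X$ where $s \neq 0$; otherwise $\overline{f(W_i)}$ is a component of $Z$. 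Since $\|f^*s\|^{\sup}_{f^*\phi_\nu} \leq \|s\|^{\sup}_{\phi_\nu}$ (by considering the image of $f^{\an}\colon Y^{\an}_\nu \to X^{\an}_\nu$), we obtain $\widehat\deg_{nf^*\ovl L}(f^*s) \geq nt$, so $\lambda_{\min}(f^*\ovl L) \geq t$, and taking sup over $t$ gives the result.

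For part (1), I would fix $n_0$ such that $n_0 L$ is very ample and choose a basis $s_1, \ldots, s_N$ of $V := H^0(X, n_0 L)$ with controlled adelic data. Starting from a model $(\mc X_0, \mc L_0, \ell_0)$ defining $\ovl L$, I would shrink $\mathfrak B$ to a cofinite open over which the direct image of a suitable power of $\mc L_0$ becomes locally free of rank $N$; a local trivialisation then produces sections $s_i$ with $\|s_i\|^{\sup}_{\phi_\nu} \leq 1$ at all but finitely many $\nu$. Setting $M_\nu := \max_i \|s_i\|^{\sup}_{\phi_\nu}$, the sum $C := \sum_\nu \deg_{\mc H}(\nu) \log M_\nu$ is a finite real number. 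Given any proper closed $Z \subsetneq X$ with components $Z_1, \ldots, Z_k$, let $T_l \subset V$ denote the (proper) subspace of sections vanishing on $Z_l$. The crucial point is to find $s = \sum a_i s_i$ with $a_i \in k$ and $s \notin \bigcup_l T_l$: any polynomial in $K[x_1, \ldots, x_N]$ cutting out $\bigcup_l \mbb P(T_l)$ is nonzero, and cannot vanish identically on $k^N$ since $k$ is infinite and the monomials remain $K$-linearly independent as functions $k^N \to K$ (by flatness of $K/k$). For such an $s$, the ultrametric inequality together with $|a_i|_\nu = 1$ for $a_i \in k^\times$ gives $\|s\|^{\sup}_{\phi_\nu} \leq M_\nu$, hence $\widehat\deg_{n_0\ovl L}(s) \geq -C$. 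This proves $\lambda_{\min}(\ovl L) \geq -C/n_0 > -\infty$.

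The main technical obstacle is the first step of part (1): arranging for the basis of $V$ to be adelically bounded, which requires a careful use of the local freeness of the coherent direct image of a sufficiently high power of the model line bundle over a cofinite open subset of $\mathfrak B$. Once such a basis is secured, the argument is driven by the fact that coefficients drawn from $k^\times$ are adelically trivial (by the product formula for $K$), so the resulting lower bound $-C$ is uniform across all proper closed subvarieties $Z$ without any need to optimize over $K$-coefficients.
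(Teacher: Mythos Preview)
Your proof of part (2) is essentially identical to the paper's.

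For part (1) your argument is correct but takes a genuinely different route. The paper fixes sections $s_1,\dots,s_{\dim X}\in H^0(X,L)$ with $\bigcap_i\lvert\mr{div}(s_i)\rvert=\emptyset$; given $Z$ with components $V_1,\dots,V_m$, it picks $s_i'\in\{s_1,\dots,s_{\dim X}\}$ not vanishing on $V_i$, then uses ampleness to find ``bump sections'' $t_i\in H^0(X,nL)$ with $t_i|_{V_j}=0$ for $j\neq i$ and $t_i|_{V_i}\neq 0$, and forms $\sum_i (s_i')^{\ell}t_i\in H^0(X,(\ell+n)L)$. The sup-norms are bounded by $C_\nu^{\ell}D_\nu$ where $C_\nu=\max_i\lVert s_i\rVert^{\sup}_{\phi_\nu}$ is \emph{independent of $Z$}; letting $\ell\to\infty$ washes out the $D_\nu$-term and yields $\lambda_{\min}(\ovl L)\geq -\sum_\nu\deg_{\mc H}(\nu)\log C_\nu$. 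By contrast, you fix a full basis of $H^0(X,n_0L)$ once and for all and avoid the subspaces $T_l$ using a \emph{$k$-linear} combination, exploiting that $\lvert a\rvert_\nu=1$ for $a\in k^\times$. This is cleaner---no limiting in $\ell$ is needed and the bound is immediate---but it is specific to function fields with an infinite constant field; the paper's construction would transfer verbatim to the number-field setting. A minor remark: your ``locally free direct image'' step is more than you need---any $K$-basis of $H^0(X,n_0L)$ automatically has bounded sup-norms at all but finitely many places, simply because each $s_i$ extends to a regular section of the model line bundle over a cofinite open of $\mathfrak B$.
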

\begin{proof}
    (1) After taking a sufficiently large multiple, we may assume that $L$ is very ample. Then there exists $s_1,\dots,s_{\dim X}\in H^0(X,L)$ such that $\cap_i\lvert\mr{div}(s_i)\rvert=\emptyset.$ 
    Let $V_1,\dots,V_m$ be the irreducible components of $Z$. For each $i=1,\dots,m$, chose $s'_i\in \{s_1,\dots, s_{\dim X}\}$ such that $s_i'$ does not vanish on $V_i.$ 
    Since $L$ is ample, there exists $t_1,\dots,t_m\in H^0(X,nL)$ for $n\gg0$ such that $t_i|_{V_j}\begin{cases}
        =0 \text{ if }i\not=j\\
        \not=0\text{ if }i=j
    \end{cases}$. 
    Let $$\displaystyle C_\nu=\max_{1\leq i\leq \dim X}\{\lVert s_i\rVert^{\sup}_{\phi_\nu}\}\text{ and }\displaystyle D_\nu=\max_{1\leq i\leq m}\{\lVert t_i\rVert^{\sup}_{n\phi_\nu}\}.$$
    Then for any $\ell>0$ and $\nu\in \mathfrak B^{(1)}$, $$\lVert \sum (s'_i)^\ell \cdot t_i\rVert_{(\ell+n)\phi_\nu}^{\sup}\leq C_\nu^\ell\cdot D_\nu.$$
    Therefore $\lambda_{\min}(\ovl L)\geq-\sum_{\nu\in \mathfrak B^{(1)}}\deg_{\mc H}(\nu)\cdot\log C_\nu.$

    (2) For any $s\in H^0(X,L)$, by definition, $\lVert f^*s\rVert_{f^*\phi_\nu}(x)=\lVert s\rVert_{\phi_\nu}(f(x))$ for any $x\in X^\an_\nu,$ which implies that $\lVert f^*s\rVert_{f^*\phi_\nu}^{\sup}\leq\lVert s\rVert_{\phi_\nu}^{\sup}$. Hence $\widehat\deg_{f^*\ovl L}(f^*s)\geq\widehat\deg_{\ovl L}(s).$
    
    Let $Z\subsetneq Y$ be a closed subset with irreducible components $V_1,\dots,V_m$. If $f(V_i)=X$, then $f^*s$ does not vanish on $V_i$ for any non-zero $s\in H^0(X,L).$ Therefore we may assume that $f(V_i)\subsetneq X$ for $i=1,\dots, m$. For any $t< \lambda_{\min}(\ovl L)$, there exists $s\in H^0(X,nL)$ such that $\widehat\deg_{n\ovl L}(s)\geq nt$, and $s$ does not vanish on every $f(V_i).$ Since $f^*s$ does not vanish on $V_i$ for $i=1,\dots, m$, we conclude the proof.
\end{proof}

\begin{proof}[Proof of the fundamental inequality]
    The first inequality is due to \cite[Proposition 6.4]{CMArakelovAdelic} and \cite[Theorem B]{CM2024Positivity}.
    For the second inequality, we follow the argument of \cite[Theorem 5.2]{zhangposvar} proceeding by induction on $\dim X$. If $\dim X=0$, there is nothing to prove. Now assume that for any projective variety of dimension smaller than $\dim X$, the fundamental inequality holds.
    
    For any $\epsilon>0$,
    by definition, there exists a proper closed subset $Z\subset X$ such that $$\mr{ess}_{\ovl L}(X)\leq \inf_{x\not\in Z(\ovl K)}h_{\ovl L}(x)+\epsilon.$$
    After taking sufficiently large multiple of $L$, we may assume that there exists $s\in H^0(X,L)$ such that $s$ does not vanish at each generic point of $Z$, and $\widehat\deg_{\ovl L}(s)\geq \lambda_{\min}(\ovl L)-\epsilon$. Let $Y=\mr{div}(s).$
    \begin{align*}
        \displaystyle&\frac{\ovl L^{\dim X}}{\deg_L(X)}=\frac{1}{\deg_L(X)}\left(\ovl L^{\dim X-1}\cdot[Y]-\displaystyle\sum_{\nu\in\mathfrak B^{(1)}}\int \log\lVert s\rVert_{\phi_\nu}(x)c_1(\ovl L_\nu)^{\dim X}\right)
        \\
        &\geq \frac{\ovl L^{\dim X-1}\cdot[Y]}{\deg_L(X)}+\widehat{\deg}_{\ovl L}(s)\geq \frac{\ovl L^{\dim X-1}\cdot [Y]}{\deg_L(X)}+\lambda_{\min}(\ovl L)-\epsilon.
    \end{align*}
    Let $Y=\sum a_i Y_i$ where each $Y_i$ is a prime divisor. Then \begin{align*}
        & \ovl L^{\dim Y}\cdot[Y]=\sum a_i \ovl L^{\dim Y}\cdot[Y_i]\\ &\geq \sum {a_i}\deg_{L}(Y_i)(\mr{ess}_{\ovl L}(Y_i)+\dim Y\cdot\lambda_{\min}(\ovl L|_{Y_i}))\\
        &\geq \deg_L(Y)(\mr{ess}_{\ovl L}(X)+\dim Y\cdot \lambda_{\min}(\ovl L)-\epsilon),
    \end{align*}
    where the first inequality is due to induction hypothesis, and the second is due to Proposition \ref{prop_ample_fin_pullback} (2) and the following: $$\mr{ess}_{\ovl L}(Y_i)\geq \inf_{x\in (Y_i\setminus Z)(\ovl K)} h_{\ovl L}(x)\geq \inf_{x\not\in Z(\ovl K)} h_{\ovl L}(x)\geq \mr{ess}_{\ovl L}(X)-\epsilon.$$
    Notice that $\deg_L(X)=\deg_L(Y)$, it follows that $$\displaystyle\frac{\ovl L^{\dim X}}{\deg_L(X)}\geq \mr{ess}_{\ovl L}(X)+\dim X\cdot \lambda_{\min}(\ovl L)-2\epsilon.$$
    Since $\epsilon$ is arbitrary, we are done.
\end{proof}

\section{Appendix B}\label{sec_Appendix_B}
In this subsection, we sketch an alternative proof of the Bogomolov conjecture for semiabelian varieties over $\ovl\Q$, by reducing to the almost-split case proved in \cite{cham2000pet}. We refer the reader to \cite{zhang1995adelic} for the theory of adelic line bundles over number fields. 

We begin by recalling the canonical line bundle on $\mbb P^1_\Z$.
The tautological bundle $O(1)$ on $\mbb P^{1}(\C)$ is equipped with the \emph{canonical metric}:
$$\lVert X \rVert([x;y]):=\frac{\lvert x\rvert}{\max\{\lvert x\rvert,\lvert y\rvert\}}$$
where $[x;y]\in \mbb P^1(\C).$
We denote by $\ovl {O(1)}$ the adelic line bundle induced by the Hermitian line bundle $(O_{\mbb P^1_{\Z}}(1),\lVert\cdot\rVert).$ For $t'>0$, the canonical adelic line bundle $\ovl M'$ on $\mbb{G}_{m}^{t'}\subset (\mbb P^1)^{t'}$ is exactly $\underbrace{\ovl{O(1)}\boxtimes\cdots\boxtimes\ovl{O(1)}}_{t'\text{ times}}.$

Let $0\rightarrow \mbb G_m^t\rightarrow G\rightarrow A\rightarrow 0$ be a semiabelian variety over $\Q$. Let $\ovl N$ be the canonical line bundle associated with a symmetic ample line bundle $N$ on $A$. Let $\ovl M$ be the canonical line bundle on $\ovl G$ associated with the boundary divisor as in \cite{Kuhne2024semibogo}. Also, we take $\ovl L=\ovl M+\ovl{\pi}^*\ovl N$.

\begin{theo}\label{theo_torsion_test}
    Let $X\subset \mathbb G_m^{t'}\times G_{\ovl \Q}$ be a closed subvariety. 
    Let $p_2:(\mbb P^1)^{t'}\times \ovl G_{\ovl \Q}\rightarrow \ovl G_{\ovl \Q}$ and $p:(\mbb P^1)^{t'}\times \ovl G_{\ovl \Q}\rightarrow (\mbb P^1)^{t'}\times A_{\ovl \Q}.$
    Assume that there exists a subtorus $Y\subset \mathbb G_m^{t'}$ such that 
    $p|_{X}:X\rightarrow Y\times A$ is generically finite.
    If $p_2^*\ovl M\cdot p^*(\ovl M'\boxtimes \ovl N)^{\dim X}\cdot [\ovl X]=0,$
    then $G$ is almost-split.
\end{theo}
\begin{proof}
    The proof is identical with Theorem \ref{theo_constant_torsion_test}, so we only give a sketch.
    We may verify that Lemma \ref{lemm_aux} holds over number fields as well by using the fundamental inequality in \cite[Theorem 5.2]{zhangposvar}. 
    
    Note that by \cite[Theorem 7.1]{cham2000pet}, a closed subvariety in $\mbb G_m^{t'}\times A_{\Q}$ is of form $W\times B$ where $W$ is a torsion translate of a subtorus, and $B$ is a torsion translate of an abelian subvariety. Reproducing the proof of Theorem \ref{theo_constant_torsion_test} in the $\ovl K/k$-free trace case, that is, $A_1=A$, we concludes the proof.
\end{proof}

Then we can prove the Bogomolov conjecture for semiabelian varieties.
\begin{theo}[Bogomolov Conjecture for semi-abelian varieties]
    Let $X\subset G_{\ovl \Q}$ be a closed subvariety. Then $\{x\in X(\ovl \Q)\mid h_{\ovl L}(x)\leq \epsilon\}$ is Zariski dense in $X$ for every $\epsilon>0$ if and only if $X$ is a torsion translate of a subgroup.
\end{theo}
\begin{proof}
It suffices to prove the "only if" part.
After taking the quotient of $G_{\ovl \Q}$ by $\mr{Stab}(X)$, we may assume that $\mr{Stab}(X)=0$ and $\mr{ess}_{\ovl L}(X)=0$.

We may take sufficiently large $n$ such that there exists an irreducible component $X'$ of $X^n_{/S}$, which is generically finite to $\alpha_n(X')$ as in Proposition \ref{prop_rel_FZ_gen_fin}. Since $\alpha_n(X')$ is of essential minimum $0$, by \cite[Theorem 7.1]{cham2000pet}, $\alpha_n(X')\subset \mathbb G_m^{t(n-1)}\times A_{\ovl \Q}$ is a torsion translate of semiabelian subvariety. After taking an isogeny and shrinking $A$, we may assume that $\alpha_n(X')=Y\times A$ where $Y$ is a subtorus of $\mathbb G_m^{t(n-1)}$. Note that $\beta_n(X')$ also has essential minimum $0$, and is generically finite to $\alpha_n(X')$. Applying Theorem \ref{theo_torsion_test}, we obtain that $G$ is almost-split. Using \cite[Theorem 7.1]{cham2000pet} again, the result follows.
\end{proof}

\bibliographystyle{plain}
\bibliography{mybibliography}
\end{document}